\pgfplotsset{width=6.7cm,compat=1.9}
\def\loc{\mathrm {loc}}
\def\pvar{p\mathrm{\text{-}var}}
\newif\ifdark
\definecolor{darkred}{rgb}{0.9,0.2,0.2}
\definecolor{darkblue}{rgb}{0.7,0.3,1}
\definecolor{darkgreen}{rgb}{0.1,0.9,0.1}
\definecolor{pagebackground}{rgb}{.15,.21,.18}
\definecolor{pageforeground}{rgb}{.84,.84,.85}
\definecolor{darkred}{rgb}{0.7,0.1,0.1}
\definecolor{darkblue}{rgb}{0.4,0.1,0.8}
\definecolor{darkgreen}{rgb}{0.1,0.7,0.1}
\definecolor{pagebackground}{rgb}{1,1,1}
\definecolor{pageforeground}{rgb}{0,0,0}
\definecolor{newred}{RGB}{208,16,76}
\definecolor{newgreen}{RGB}{34,125,81}
\newcommand{\globalcolor}[1]{%
	\color{#1}\global\let\default@color\current@color
}
\DeclareSymbolFont{timesoperators}{T1}{ptm}{m}{n}
\renewcommand{\operator@font}{\mathgroup\symtimesoperators}
\DeclareMathAlphabet{\mathbbm}{U}{bbm}{m}{n}
\DeclareFontFamily{U}{BOONDOX-calo}{\skewchar\font=45 }
\DeclareFontShape{U}{BOONDOX-calo}{m}{n}{
	<-> s*[1.05] BOONDOX-r-calo}{}
\DeclareFontShape{U}{BOONDOX-calo}{b}{n}{
	<-> s*[1.05] BOONDOX-b-calo}{}
\DeclareMathAlphabet{\mcb}{U}{BOONDOX-calo}{m}{n}
\SetMathAlphabet{\mcb}{bold}{U}{BOONDOX-calo}{b}{n}
\newcommand*{\fat}{}
\DeclareRobustCommand*{\fat}{%
	\mathbin{\mathpalette\bigcdot@{}}}
\newcommand*{\bigcdot@scalefactor}{.5}
\newcommand*{\bigcdot@widthfactor}{1.15}
\newcommand*{\bigcdot@}[2]{%
	\sbox0{$#1\vcenter{}$}
	\sbox2{$#1\cdot\m@th$}%
	\hbox to \bigcdot@widthfactor\wd2{%
		\hfil
		\raise\ht0\hbox{%
			\scalebox{\bigcdot@scalefactor}{%
				\lower\ht0\hbox{$#1\bullet\m@th$}%
			}%
		}%
		\hfil
	}%
}
\DeclareRobustCommand{\TitleEquation}[2]{\texorpdfstring{\StrLeft{\f@series}{1}[\@firstchar]$\if b\@firstchar\boldsymbol{#1}\else#1\fi$}{#2}}
\newtheorem{assumption}[lemma]{Assumption}
\newtheorem{example}[lemma]{Example}
\numberwithin{equation}{section}
\def\slash{\leavevmode\unskip\kern0.18em/\penalty\exhyphenpenalty\kern0.18em}
\def\dash{\leavevmode\unskip\kern0.18em--\penalty\exhyphenpenalty\kern0.18em}
\let\epsilon\varepsilon
\def\${|\!|\!|}
\setlist{noitemsep,topsep=4pt}
\def\para_#1{/\!\!/_{\!#1}}
\DeclareMathOperator{\Span}{Span}
\DeclareMathOperator{\Proj}{Proj}
\newcounter{longlist}
\def\thelonglist{\@roman\c@longlist}
\def\labellonglist{(\thelonglist)}
\def\longlist{\@ifnextchar[{\@longlist}{\@longlist[]}}
\def\@longlist[#1]{%
    \list
        {\labellonglist}%
        {\labelwidth\z@ \itemindent-\leftmargin
            \usecounter{longlist}%
            \let\makelabel\descriptionlabel
                \setcounter{longlist}{3}%
                \settowidth\labelwidth{\labellonglist}%
                \setcounter{longlist}{0}%
            \itemsep0pt \topsep\medskipamount
            \parsep0pt \partopsep0pt
            }}
\providecommand{\burl}[1]{\url{#1}}
\def\f{\frac}
\begin{document}

\title{Strong completeness of SDEs and non-explosion for RDEs  \\  with coefficients having unbounded derivatives}
\author{Xue-Mei Li$^{1}$  \orcidlink{0000-0003-1211-0250} and Kexing Ying$^{2}$ \orcidlink{0000-0002-8292-4746}}

\institute{EPFL, Switzerland and Imperial College London, U.K.  \email{xue-mei.li@epfl.ch} \and EPFL, Switzerland.  \email{kexing.ying@epfl.ch}}

\maketitle
\begin{abstract}
  We establish a non-explosion result for rough differential equations (RDEs) in which the noise 
  and drift coefficients, together with their derivatives, may grow unboundedly at infinity. 
  In addition, we prove the existence of a global bi-continuous solution flow for stochastic differential equations (SDEs). 
  Finally, the non-explosion results for RDEs are shown to be sharp by constructing counterexamples.
\end{abstract}
\tableofcontents

\section{Introduction}

Recent advancements such as the stochastic sewing lemma \cite{Le2020} have significantly enhanced
our understanding of local well-posedness of rough differential equations (RDEs) and stochastic
differential equations (SDEs). Building on these developments, this article investigates two key
problems:
\begin{enumerate}
  \item Strong completeness for SDEs driven by Brownian motions and related processes; 
  \item Global existence for RDEs.
\end{enumerate}
Our aim is to accommodate vector fields with unbounded derivatives, while
allowing super-linear growth in the rotational component of the drift vector field
(i.e. the part orthogonal to the radial direction).

We first consider an ordinary differential equation (ODE) perturbed by a Hölder
continuous path, and then apply the findings to SDEs with additive noise, where we establish strong completeness and uniform non-explosion results. Leveraging an interactive bound, we further apply these results to controlled RDEs, deriving a criterion for strong completeness in the case of SDEs with multiplicative noise.  
A key technical ingredient is the Flying Fish Lemma (Lemma \ref{lem:poly-bound}), which enables us to select a common time interval on which the rough path norm can be effectively controlled. This, in turn, yields a priori estimates for the rough integral in the RDE setting and allows the rough integral to be treated as in the additive noise case.

We work on a filtered probability space $(\Omega, \mathcal{F}, (\mathcal{F}_t)_{t \ge 0}, \mathbb{P})$ 
satisfying the standard assumptions, and let $W_t=(W_t^1, \dots, W_t^m)$ be a
standard $(\mathcal{F}_t)$-adapted Brownian motion on $\R^m$. We consider the SDE on $\R^d$
\begin{equation}\label{eq:sde-intro}
  \dd x_t = b(x_t) \dd t + \sum_{k=1}^m\sigma_k(x_t) \dd W_t^k,
\end{equation}
where $b:\R^d\to \R^d$ is the drift vector field, and $\sigma_k: \R^d\to \R^d$ are the diffusion coefficients
modelling the stochastic influences. The associated solution flow, denoted by $\phi_t(x)$, gives
the state at time $t$ of the solution starting from the initial condition $x$.

The SDE~\eqref{eq:sde-intro} is said to be {\it complete} (or conservative) if, for any initial condition,
it admits a global in time solution almost surely. It has {\it uniform non-explosion} if, on a subset of $\Omega$ of full measure, no explosion occurs for any initial condition.
Finally, the SDE is {\it strongly complete} if its solution flow $\phi_t(x)$ admits a continuous global version, meaning that the map $(x,t)\mapsto \phi_t(x, \omega) $ is almost surely continuous. This property, described as having a continuous global solution flow, ensures stability with respect to initial data.

Strong completeness is significant for numerical simulations and is essential for constructing perfect 
cocycles in the framework of random dynamical systems. It also plays
a key role, together with the derivative flow, in establishing derivative formulas for Markov semigroups $P_t$ and in the commutation formula \cite[Section~9]{Li:94a} linking the derivative flow to non-explosion.
Specifically, for a function $f: \R^d\to \R$,
$$D(P_tf)(x_0)v_0 =\mathbb E[Df(x_t)v_t\1_{t<\xi(x)}],$$
where $Df(x)v$ is the derivative of a function $f$ at $x$ in the direction $v$, $\xi(x)$
is the explosion time of the solution starting from $x$, and $(v_t)$ solves the
linearized equation \eqref{eq:deriv-sde} (the derivative flow).

While strong completeness and completeness coincide for ODEs with $C^1$ vector fields, this
equivalence fails for SDEs. The first counterexample was given in \cite{Elworthy:78}
where the vector fields grow quadratically. A further counterexample was presented in \cite{Li:09}, in which
the coefficients of the SDE are bounded and smooth. A detailed discussion of this distinction,
together with a comparison, is provided in Section~\ref{sec:str-comp}.
\bigskip

We next introduce RDEs on a Hilbert space $H$ of the form
$$\dd x_t= b(x_t) \dd t + \sigma(x_t) \dd  \mathbb{\Gamma}_t.$$
Here, $V$ is a Banach space, $b : H \to H$ and $\sigma : H \to \CL(V; H)$
are Borel measurable, and $\mathbb{\Gamma}$ is a rough path on $V$ with regularity
$\alpha \in (\frac{1}{3}, 1)$. Here, $\CL(V; H)$ denotes the space of bounded linear operators
from $V$ to $H$. It is well known that if the vector fields and their derivatives are globally bounded
and sufficiently regular, the RDE does not explode in finite time (see for example \cite{Lyons:94, Friz:20}).

In this article, we remove the boundedness assumptions on the vector fields and their derivatives.
We establish a growth condition for non-explosion and, in the additive case, give a sharpness example in Proposition~\ref{pro:sharp}. We then apply this condition to SDEs to obtain criteria on the coefficients ensuring strong completeness.

\subsection{Main Results}

We briefly outline the main results of this article.

A non-decreasing function $f : \R_+ \to \R_+$ is called a control function if
$\int_r^\infty \frac{1}{f(s)} \dd s = \infty$ for some $r > 0$.
Throughout the article, we impose the following growth condition on the drift $b$.
\begin{assumption}\label{as:lin-growth}
  We say that $b : \mathbb{R}_+ \times H \to H$ has (at most) radial linear growth with control $f$ if,
  for all $x, t$,
  \begin{equation}\label{cond:cos}
    \<\frac{x}{\|x\|}, b(t, x)\> \le f(\|x\|).
  \end{equation}
\end{assumption}

This assumption is sharp for preventing explosion in SDEs.
Indeed, by It\^o's formula, the solution of the SDE $\dd x_t = b(x_t) \dd t + \dd W_t$ on $\R^d$ satisfies
$$\dd \|x_t\| = \<\frac{x_t}{\|x_t\|}, b(x_t)\> \dd t + \<\frac{x_t}{\|x_t\|}, \dd W_t\>.$$
Then, the comparison test together with Feller's test for explosion \cite[Theorem 5.29]{Karatzas:98} readily
shows that the SDE may explode should \eqref{cond:cos} fail.

We prove a \textit{uniform} non-explosion principle for RDEs and SDEs and establish strong completeness
for a class of equations whose drift $b$ has radial linear growth and satisfies the following growth condition
$$\|b(x)\|\lesssim f(\|x\|)^{1+ {\f 1 2} \alpha-}.$$
Unlike in the case of mere non-explosion, controlling only the growth along the outward radial direction is insufficient for uniform non-explosion (cf. \cite[Theorem 6.2]{Li:94a}, where additional derivative bounds
were imposed). This is illustrated by the orthogonally sheared Ornstein–Uhlenbeck SDE:
\begin{equation}\label{eq:SOU}
  \dd x_t = \left(- \frac{\alpha}{2}x_t + \rho(\|x_t\|)
  \begin{pmatrix}
      0 & -1 \\
      1 & 0
    \end{pmatrix}x_t\right) \dd t + \dd W_t, \qquad x_0 = x \in \R^2
\end{equation}
for some $\alpha > 0$. In this case,  $\langle x, b(x)\rangle = -\frac{\alpha}{2} \|x\|^2 \le 0$.
However, through private communication we learned that one can construct a function $\rho : \R_+ \to \R_+$  such that  $\rho'(r) = Cr^{3 + \epsilon}$ for which the above SDE is not strongly 
complete \cite{Chemnitz-Engel-Scheutzow}. This was mentioned informally in \cite{Scheutzow:17}, 
and the specific example will appear in a forthcoming publication.

We now turn to our non-explosion result, allowing the derivatives of the coefficients to grow without imposing any boundedness assumptions. 

In the two theorems below, we take $H$ to be a Hilbert space and $V$ to be a Banach space.

\begin{theorem*}[Theorem \ref{thm:rough}]\label{thm:intro-rough}
  Let $b \in \Lip_{\loc}(H; H) $, $\sigma \in C^{3}(H; \CL(V; H))$, and let $\mathbb{\Gamma}$ be an $\alpha$-H\"older rough path taking value in $V$ for some $\alpha \in (\frac{1}{3}, \frac{1}{2}]$. Suppose that $b$
  has radial linear growth with control $f$ (Assumption~\ref{as:lin-growth}), and that there exists $\kappa \in [0, \frac{1}{2})$ such that for all $x \in H$,
  \begin{itemize}
    \item $\|b(x)\| \le f(\|x\|)^{1 + \kappa \alpha}$,
    \item $\|D^n \sigma(x)\| \le f(\|x\|)^{(1 - n\kappa)\alpha-}$ for $n = 0, 1, 2$.
  \end{itemize}
  Then, the RDE $\dd x_t = b(x_t) \dd t + \sigma(x_t) \dd \mathbb{\Gamma}_t$
  is globally well posed.
\end{theorem*}
Here $D^n\sigma(x)\in\CL(\otimes^n H; \CL(V; H))$ denotes the $n$-th Fr\'echet derivative of $\sigma$ at $x$
and $\|D^n\sigma(x)\|$ is its operator norm.
We remark that, unlike \cite{Riedel:16}, we do not require $\mathbb{\Gamma}$ to be weakly geometric.
We also obtain the corresponding result for Young differential equations (YDE) (Theorem \ref{thm:young}).
\begin{theorem*}[Theorem \ref{thm:young}]
  Let $b\in \Lip_{\loc} (H; H) $, $\sigma\in C^{2}(H; \CL(V; H))$ and
  $\gamma \in C^\alpha_{\loc}(\R_+; V)$ for some $\alpha > \frac{1}{2}$.
  Then, if $b$ satisfies Assumption~\ref{as:lin-growth} with control $f$, and there exists
  constant $\kappa \in [0, 1)$ such that for all $x \in H$,
  \begin{itemize}
    \item $\|b(x)\| \le f(\|x\|)^{1 + \kappa \alpha},$
    \item $\|D^n \sigma(x)\| \le f(\|x\|)^{(1 - n\kappa) \alpha-}$ for $n = 0, 1$.
  \end{itemize}
  Then, the YDE $\dd x_t = b(x_t) \dd t + \sigma(x_t) \dd \gamma_t$
  is globally well-posed.
\end{theorem*}

We observe that Theorem~\ref{thm:intro-rough} is 
essentially sharp at least in the case where \(\kappa = 0\) and \(\alpha = \frac{1}{2}\) in the following sense.

\begin{example*}
  Let $\sigma : \R^2 \to \R^2$ be given by $\sigma(x) = \left(x_1^{\f 12 - \delta}\sin x_2, x_1^{\f 12 + \epsilon}\right)^T$ for 
  some $\epsilon > \delta > 0$ (so that \(\sigma\) and its derivative has polynomial growth of order 
  \(\f 12 + \epsilon\) and \(\f 12 - \delta\) respectively). 
  Let $\mathbb{\Gamma}$ be the pure area rough path $(0, \, (1\otimes 1) t)$. 
  Then, the RDE $\dd x(t)=\sigma(x(t))\dd\mathbb{\Gamma}_t$ reduces to the ODE 
  \begin{align*}
    \dd x(t) & = D\sigma(x(t))\sigma(x(t)) \dd \mathbb{\Gamma}_t\\
    & = 
    \begin{pmatrix}
      \left(\f 12 - \delta\right)x_1(t)^{- 2\delta} \sin^2 (x_2(t)) + x_1(t)^{1 + (\epsilon - \delta)} \cos (x_2(t))\\
      \left(\f 12 + \epsilon\right) x_1(t)^{\epsilon - \delta} \sin (x_2(t))
    \end{pmatrix}.  
  \end{align*}
  Thus, for any initial conditions of the form $(x_1, 0)$ with $x_1 > 0$, we observe that $x_2(t)=0$ 
  for all $t\ge 0$ and $\dot x_1 = x_1^{1 + (\epsilon - \delta)}$ from which we observe 
  explosion in finite time.
\end{example*}

The growth conditions in Theorem~\ref{thm:intro-rough} allow us to conclude strong completeness
of the corresponding SDE.
\begin{theorem*}[Corollary~\ref{cor:strong-complete}]
Suppose that $b$ and $ \sigma$ satisfy the conditions of Theorem~\ref{thm:intro-rough}, the SDE
  $$\dd x_t = b(x_t) \dd t + \sigma(x_t) \dd W_t$$
  is strongly complete.
\end{theorem*}

For additive noise, these conditions can be refined to allow for lower regularity in the 
drift and relaxed growth restrictions. 
\begin{proposition*}[Corollary \ref{cor:global-holder}]\label{cor:B}
  Let $\alpha \in (0, 1)$ and $\gamma\in C_{\loc}^\alpha$ be fixed. Let
  $b: [0,T]\times H\to H$ satisfies Assumption \ref{as:lin-growth} (radial linear growth with control~$f$), 
  and suppose that there exists some $\beta \in (1, 1 + \alpha]$ and $a>0$ such that
  \begin{align}
    & \sup_{t \in [0, T]}   \left|\left\langle y, b(t, x)\right\rangle\right| \le (1 + \|x\|)f(\|x\|)^\beta, 
        \quad  \hbox{ for all } y \perp  x, \|y\|=1,\label{eq:orthogonal-growth}\\
    & \sup_{t \in [0,T]}\sup_{\|x\|\le a} \|b(t,x)\|<\infty. \label{eq:boundedness}
  \end{align}
Then any maximal solution $(x_t)_{t \in [0, \xi)}$ of the perturbed ODE 
$\dd x_t = b(t, x_t)\dd t + \dd \gamma_t$ is in fact global.
\end{proposition*}

{\it Remark.}  For \(\alpha = 1\),  a simple Gr\"onwall argument shows that the conclusion above  
still holds without the orthogonal growth condition \eqref{eq:orthogonal-growth}.

Corollary~\ref{cor:global-holder}  is sharp in the following sense.
\begin{example*}[Proposition \ref{pro:sharp}]
  For any $\epsilon > 0$, there exists some $\alpha > 0$ and a curve $\gamma \in C^{(1 - \epsilon)\alpha}(\R_+; \R^2)$  such that the solution to the ODE
  $$\dd x_t =  \|x_t\|^{1 + \alpha}J  x_t \; \dd t + \dd \gamma_t, \qquad J=\begin{pmatrix}0 &-1\\ 1 &0\end{pmatrix} $$
  explodes in finite time. 
  
Note that the drift term $b(x) = \|x\|^{1 + \alpha} J x$  belongs to $ \Lip_{\loc}(\R^2; \R^2)$ and satisfies 
the assumptions in Proposition \ref{cor:B} with $f(s)=s$ and $\beta = 1 + \alpha$.
\end{example*}

Our approach allows a smooth transition from maximal solutions to uniform non-explosion for SDEs driven 
by noise of mixed types, with finite Hölder or finite variational norms. In particular, 
in the case where the driver is given by (fractional) Brownian motions or L\'evy processes, 
we obtain the following result.
\begin{theorem*}\label{thm:additive-main}
Let $b \in \CB(\R_+ \times \R^d; \R^d)$ satisfy the radial linear growth condition with control $f$ (Assumption \ref{as:lin-growth}),  the orthogonal growth~\eqref{eq:orthogonal-growth}, and condition~\eqref{eq:boundedness} where
 $\beta$ is specified below. Consider the SDE
  \begin{equation}\label{eq:SDEgen}
    \dd x_t = b(t, x_t) \dd t + \dd X_t.
  \end{equation}
  Then the following statements hold.
  \begin{longlist}
    \item [(i)]    
          The SDE is strongly complete and path-by-path unique, if $X = W$ is a Brownian motion,
          $\beta < \frac{3}{2}$ and $b \in L^q([0,T]; L^p_{\loc}(\R^d; \R^d))$ for some $p, q > 2$ 
          satisfying $2 / q + d / p < 1$.
    \item [(ii)]
          The SDE is strongly complete and path-by-path unique, if
          $X = B^H$ is a fractional Brownian motion with Hurst index $H \in (0, 1)$, $\beta < 1 + H$,
          and $b \in C^\alpha_{\loc}(\R^d; \R^d)$ (now $b$ is time-homogeneous),
          $\alpha \in (1 - \frac{1}{2H}, 1)$.
    \item[(iii)]
          The SDE has a unique global solution flow, which is furthermore locally Lipschitz in its initial condition, 
          if $b$ is locally Lipschitz and
          $X = Z$ is a L\'evy process with L\'evy triple $(a, \Sigma, \nu)$ for which 
          $$\int_{\R^d} \left(1 \wedge \|z\|^p\right) \nu(\dd z) < \infty$$ for $p = 2$ and $\beta < \frac{3}{2}$; 
          or if  $\Sigma = 0$, $p \in (1, 2)$,  and $\beta < 1 + \frac{1}{p}$.
  \end{longlist}
\end{theorem*}
This theorem recovers and strengthens \cite[Proposition 7.1]{Scheutzow:17} and certain results in \cite{Friz-Zhang}.

\subsection{Organization}
This article is structured as follows:
\begin{itemize}
  \item Section~\ref{sec:ODE} studies ODEs perturbed by a Hölder continuous curve, laying the foundation for the rest of the work. We obtain bounds on the escape rate of solutions, show the sharpness of these bounds, and establish strong completeness for SDEs with additive noise.
  \item Section~\ref{sec:RDE} develops a non-explosion criterion for RDEs, beginning with YDEs. As a consequence, we derive a strong completeness result for SDEs with multiplicative noise.
  \item Appendix~\ref{sec:cont-holder} contains the proof of Lemma~\ref{lem:cont-holder}, a technical tool used in Section~\ref{sec:RDE}.
  \item Appendix~\ref{sec:extension} shows that maximal solutions can be obtained from local solutions without assuming uniqueness.
\end{itemize}

\subsection{Notations}

Throughout this paper,  $V$ and $W$ denote Banach spaces, and $H$ denotes a Hilbert space. We write
$\CL(V; W)$ for the space of bounded linear operators from $V$ to $W$, and $\CB(V; W)$ for the
set of Borel measurable functions from $V$ to $W$. The space $\Lip(V; W)$ and
$\Lip_{\loc}(V; W)$ consist of globally and locally Lipschitz continuous functions from $V$ to $W$ respectively.
 
The notation $f(x) \lesssim g(x)$ means that there exists a constant $C>0$ such that
$f(x)\le Cg(x)$ for all~$x$. For $f : V \to W$, we write $Df$ for its 
Fr\'echet derivative.  

We write \(L^q([0,T]; L^p(V; W))\) for the space of functions \(f : [0,T] \times V \to W\) such that
$$\int_0^T \left(\int_V \|b(t, x)\|^p \dd x\right)^{\frac{q}{p}} \dd t < \infty.$$
We define \(L^q([0,T]; L^p_{\loc}(V; W)) = \bigcap_{K} L^q([0,T]; L^p(K; W)),\)
where the intersection is taken over all compact sets \(K \subseteq V\).

\medskip

{\it H\"older and $p$-variation norms.} For $\alpha \in (0, 1]$ and $I \subseteq [0, T]$,
we denote $$C^\alpha_I = C^\alpha(I; V)$$ for the space of $\alpha$-Hölder continuous functions 
from $I$ to $V$.

For $f \in C^\alpha_I$, we write 
$$\|f\|_{\alpha; I} = \sup_{s \neq t \in I} \frac{\|f(t) - f(s)\|}{|t - s|^\alpha}$$
for its H\"older semi-norm on $I$, and $\|f\|_{\infty; I}=\sup_{t\in I}\|f(t)\|$ its supremum norm on $I$.
We omit the subscript $I$ when $I = [0, T]$, $\R_+$ or when $I$ is clear from context. 
Finally, we set $C^\alpha_{\loc} = \bigcap_K C^\alpha_K$ where the intersection is taken over all compact sets $K$.

 For $p \ge 1$, we denote $$C^{\pvar}_I = C^{\pvar}(I; V)$$ for the space of functions with finite $p$-variation, and $\|f\|_{\pvar; I}$ for the $p$-variational semi-norm of $f$ on the interval $I$,
i.e.
$$\|f\|_{\pvar; I} = \left(\sup_{P \in \mathcal{P}_I} \sum_{(t_i, t_{i + 1}) \in P} \|f(t_{i + 1}) - f(t_i)\|^p\right)^{\frac{1}{p}}$$
where $\mathcal{P}_I$ denotes the set of all partitions of $I$. 
As before, we write $C^{\pvar}_{\loc} = \bigcap_K C^{\pvar}_K$ with
the intersection taken over all compact sets $K$. 

\medskip

{\it Rough Path Spaces. } 
We recall some notation from rough path theory that will be used in the second part of this article.
For $T>0$, set
 $$\Delta_T = \{(s, t) \in [0, T]^2 : s \le t\}; \quad \Delta = \{(s, t) \in \R_+^2 : s \le t\}.$$
For $\alpha > 1$, define
\begin{equation}
  C^{\alpha}([0, T]; V) = \left\{f \in C^{\lfloor \alpha \rfloor}([0, T]; V) : \|f\|_{\alpha} <\infty\right\}
\end{equation}
where
\begin{equation}\label{eq:holder_norm}
  \|f\|_{\alpha} = \|D^{\lfloor \alpha \rfloor} f\|_{\alpha - \lfloor \alpha \rfloor}
  + \sum_{\substack{\ell\in\N_0\\|\ell|\leq\lfloor\alpha\rfloor}} \sup_{t \in [0, T]}\|D^{\ell}f(t)\|.
\end{equation}
For a two parameter process $A : \Delta_T \to V$ and $s < t \in [0, T]$, 
its $\alpha$-H\"older norm on $[s, t]$ is
$$\|A\|_{\alpha; [s, t]} = \sup_{u < v \in [s, t]} \frac{\|A_{u, v}\|}{|u - v|^\alpha}.$$
We denote by $\C^\alpha(\Delta_T; V)$ the space of two parameter processes with finite $\alpha$-H\"older norm.

For $s < u < t \in [0, T]$, set $$\delta A_{s, u, t} = A_{s, t} - A_{s, u} - A_{u, t}.$$
Finally, for any path $x : [0, T] \to V$, we write
 $$x_{s,t} = x_t - x_s, \qquad s \le t \le T.$$

\section{Background}\label{sec:str-comp}
\subsection{Strong completeness}

In the absence of a diffusion term ($\sigma=0$), the SDE \eqref{eq:sde-intro} reduces to a deterministic
ODE. In this case, for sufficiently smooth $b$, the strong completeness property is equivalent to
completeness.  For a long time, it was widely assumed that the same holds for SDEs, that is a complete SDE with smooth coefficients would automatically be strongly complete. This belief remained unchallenged until K. D. Elworthy provided the first counterexample in \cite{Elworthy:78}.
\begin{example}[\cite{Elworthy:78}]
  Consider the SDE on $\R^2$ 
  \begin{equation}\label{eq:elworthy}
    \begin{split}
      \dd x_t & = (y_t^2 - x_t^2) \dd W_t^1 + 2x_t y_t \dd W_t^2,  \\
      \dd y_t & = -2x_t y_t \dd W_t^1 + (x_t^2 - y_t^2) \dd W_t^2.
    \end{split}
  \end{equation}
  Using a Lyapunov function, one can show that this SDE is complete. However, strong completeness fails. Indeed, under a suitable diffeomorphism, the system transforms into the SDE
  $$\dd z_t = \dd W_t \quad  \hbox{ on } \quad \R^2 \setminus \{0\}, $$ for which \eqref{eq:elworthy} explodes if
  and only if $W_t(\omega)$ hits $-z_0$.
\end{example}

This motivates the search for general criteria ensuring strong completeness for SDEs. In light of ODE 
theory and the above example, one might suspect that the problem stems from the super-linear growth 
of the coefficients. However, Li and Scheutzow \cite{Li:09} demonstrated that this is not the whole story. There, 
they constructed an SDE of the form $\dd x_t = \sigma(x_t)\dd W_t$, where $\sigma$ is smooth and bounded, 
which nevertheless fails to have uniform non-explosion, and consequently is not strongly complete.

It might seem natural to prove uniform non-explosion by first establishing non-explosion for a countable dense set of initial conditions and then extending the result to all initial conditions via continuity. However, this strategy does not always succeed: even for ODEs, the explosion time need not depend continuously on the initial condition.

\begin{example}
  Consider the ODE $$z_t' =z^2_t$$ on the complex plane $\bf{C}$. Its solution is
  $z_t =\f {z_0}{1-z_0 t}$ so the explosion time is $\zeta(z_0) =\f 1 {z_0}$ for any real 
  initial condition $z_0$ and $\zeta(z_0)=\infty$ otherwise. Thus, the explosion time is not 
  continuous in $z_0$. The same discontinuity phenomenon occurs for SDEs; for example, consider  $\dd z_t=z_t^2\circ \dd W_t$.
\end{example}

For SDEs on complete, non-compact Riemannian manifolds, a criterion for strong completeness was 
established by Li in her Ph.D. thesis \cite{Li:thesis}. The method couples the SDE with its derivative 
flow (the solution of the linearized equation along the original trajectories) and derives strong completeness 
by controlling the growth of the vector fields and their derivatives. 
We shall briefly explain the results in \cite{Li:94a} applied $\R^d$ for which the most relevant examples are
collected into Corollary \ref{cor:strong-comp-old}.

\subsubsection{Strong completeness via linearization}
Assume the vector fields $\sigma$ and $b$ are sufficiently regular (e.g. $\sigma, b\in C^2$) so that the
SDE~\eqref{eq:sde-intro} is locally well-posed. Let $\phi_t$ denote its maximal solution flow and set 
$x_t=\phi_t(x_0)$ for the solution starting from $x_0$.

Consider the linearized equation with initial condition~$v_0$,
\begin{equation}\label{eq:deriv-sde}
  \dd v_t = D b(x_t) v_t \,\dd t + \sum_{k=1}^m D\sigma_k(x_t)v_t \,\dd W_t^k.
\end{equation}
 When \eqref{eq:sde-intro} and \eqref{eq:deriv-sde} are regarded as a coupled system, the process $(v_t)$ is the derivative (in probability) of $\phi_t$ with respect to its initial data in the direction $v_0$. This probabilistic differentiability is weaker than almost sure differentiability; the latter holds under strong completeness when the coefficients are sufficiently regular (e.g. $C^2_b$).

Denote the derivative flow, the solution flow of~\eqref{eq:deriv-sde},  by
 $$\{D\phi_t(x_0)v_0: (x_0, v_0)\in \R^d\times \R^d\}.$$
  Li \cite{Li:94a} showed that if the SDE is complete at one point (namely, there exists some $x_0$ 
such that the solution starting from $x_0$ exists for all time almost surely) and
  $$\sup_{x\in K}\E\left[\sup_{s\le t} \|D \phi_s(x)\|^{q}\1_{s<\zeta(x)}\right] < \infty,$$
 for all compact $K\subset\R^d$ and some $q>d-1$, then the SDE is strongly complete. 
 
This control can be obtained via bounds on the derivatives of $\sigma_k$ and $b$ through a function  
$$H_q(x, v)= 2\langle D b(x)v, v\rangle
  +(q-2) \sum_{k=1}^m \f { \langle D \sigma_k(x)v, v\rangle ^2}{\|v\|^2}+\sum_{k=1}^m\|D\sigma_k(x)v\|^2.$$
By It\^o's formula, 
\begin{equation}
  \|v_t\|^q = \|v_0\|^q N_t \exp\left(\f q 2\int_0^t H_q(x_s, v_s) \dd s\right)
\end{equation}
where  $N_t$ is the exponential martingale given below,
$$N_t= \exp\left(\sum_{k=1}^m \int_0^t\f {\<D \sigma_k(x_s)v_s ,v_s\>}{\|v_s\|^2} \dd W_s^k
  -\f 12 \sum_{k=1}^m \int_0^t  \f {\<D \sigma_k(x_s) v_s,v_s\>^2} {\|v_s\|^4}\dd s\right).$$
  
\begin{theorem}\cite[Theorem 4.1, 5.1]{Li:94a}
  Let $\sigma, b\in C^2$. Suppose \eqref{eq:sde-intro} is complete at one point. If there exists
  $g: \R^d\to \mathbb \R_+$ such that, for any compact set $K\subset \R^d$ and any $t>0$,
  $$\sup_{x \in K} {\mathbb E}\left(e^{6q^2 \int_0^{t} g(\phi_s(x))  {\bf 1}_{s<\zeta(x)} \dd s}\right)<\infty$$
  for some $q > d - 1$, and if, all $x$
  \begin{equation}\label{eq:deriv-cond}
    \sum_{k=1}^m\|D\sigma_k(x)\|^2 \le g(x)\quad \text{ and }\quad \sup_{\|v\| = 1} H_q(x,v)\le 6 q \,g(x).
  \end{equation}
then the SDE \eqref{eq:sde-intro} is strongly complete.  If these conditions hold for both $b$ and $-b$, the flow is a.s. a homeomorphism.
\end{theorem}

If there exists a smooth function $g$ whose derivatives satisfy the inequality \eqref{eq:deriv-cond2} below, then the exponential moment $\E[ e^{c g(x_t)}]$ grows at most exponentially in $t$. This  Lyapunov-type estimate controls the growth of  $g(x_t)$ and can be used to verify the condition in the theorem above.

\begin{proposition}\label{lem:grad-bd}\cite[Lemma 6.1]{Li:94a}
  Let $c>0$ and $g \in C^2(\R^d; \R)$. If for some $C>0$,
  \begin{equation}\label{eq:deriv-cond2}\frac12 \sum_{k = 1}^m \<D g, \sigma_k\>^2
    + \frac12 \sum_{k = 1}^m D^2 g (\sigma_k,\sigma_k)+\<D g, b\>\le C,
  \end{equation}
  where $D^2 g$ denotes the Hessian of $g$, then $$\mathbb E[ e^{c g(x_t)}]\le e^{Kt} e^{cg(x_0)}.$$

  Consequently, \eqref{eq:sde-intro} is strongly complete provided it is complete and both \eqref{eq:deriv-cond} and \eqref{eq:deriv-cond2} hold.
\end{proposition}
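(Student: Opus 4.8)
The plan is to apply It\^o's formula to the real-valued process $t\mapsto e^{cg(x_t)}$ and read off a pointwise drift estimate from \eqref{eq:deriv-cond2}. Since $g\in C^2$ (and the coefficients are at least locally Lipschitz, so $x_t$ is well defined up to the explosion time $\zeta=\zeta(x_0)$), It\^o's formula gives, writing $\Theta:=\<\nabla g,b\>+\tfrac12\sum_{k}\nabla^2g(\sigma_k,\sigma_k)+\tfrac{c}{2}\sum_{k}\<\nabla g,\sigma_k\>^2$,
\begin{equ}
  \dd\, e^{cg(x_t)}=c\,e^{cg(x_t)}\,\Theta(x_t)\,\dd t+c\,e^{cg(x_t)}\sum_{k=1}^m\<\nabla g(x_t),\sigma_k(x_t)\>\,\dd W_t^k .
\end{equ}
When $c\le 1$ the factor $\tfrac c2$ is dominated by $\tfrac12$ and $\sum_k\<\nabla g,\sigma_k\>^2\ge 0$, so \eqref{eq:deriv-cond2} yields $\Theta\le C$ and the drift of $e^{cg(x_t)}$ is at most $cC\,e^{cg(x_t)}$; for a general $c$ the identical computation uses the variant of \eqref{eq:deriv-cond2} with $\tfrac c2$ in place of $\tfrac12$ and gives a drift bound $K\,e^{cg(x_t)}$ with $K=K(c,C)$.

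Next I would localize. Let $\tau_n=\inf\{t\ge 0:\|x_t\|\ge n\}$, so that $\tau_n\nearrow\zeta$. On $[0,\tau_n]$ the path stays in $\{\|x\|\le n\}$, where $e^{cg}$, $\nabla g$ and the $\sigma_k$ are bounded, so the stochastic integral stopped at $\tau_n$ is a bounded martingale and therefore
\[
  e^{cg(x_{t\wedge\tau_n})}-e^{cg(x_0)}-\int_0^{t\wedge\tau_n}c\,e^{cg(x_s)}\Theta(x_s)\,\dd s
\]
is a martingale. Taking expectations and inserting the drift bound $c\Theta\le K$, the function $u_n(t):=\mathbb E\,e^{cg(x_{t\wedge\tau_n})}$ satisfies $u_n(t)\le e^{cg(x_0)}+K\int_0^t u_n(s)\,\dd s$, whence Gr\"onwall's inequality gives $u_n(t)\le e^{cg(x_0)}e^{Kt}$. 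Letting $n\to\infty$ and applying Fatou's lemma yields $\mathbb E\bigl[e^{cg(x_t)}\1_{t<\zeta}\bigr]\le e^{cg(x_0)}e^{Kt}$, which is the asserted estimate (and a genuine expectation once completeness, i.e.\ $\zeta=\infty$ a.s., is assumed).

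For the ``consequently'' part it suffices to check the hypotheses of the preceding theorem (\cite[Thm. 4.1, 5.1]{Li:94a}) with this $g$: condition \eqref{eq:deriv-cond} is assumed, completeness implies completeness at one point, and only the exponential bound $\sup_{x\in K}\mathbb E\bigl[\exp\bigl(6q^2\int_0^t g(\phi_s(x))\1_{s<\zeta(x)}\,\dd s\bigr)\bigr]<\infty$ remains. By Jensen's inequality for the convex function $r\mapsto e^{6q^2t\,r}$ and the probability measure $t^{-1}\dd s$ on $[0,t]$,
\[
  \exp\Bigl(6q^2\!\int_0^t g(\phi_s(x))\,\dd s\Bigr)\le \frac1t\int_0^t e^{6q^2t\,g(\phi_s(x))}\,\dd s ,
\]
and taking expectations and invoking the first part with $c=6q^2t$ bounds this by $e^{K(c,C)t}e^{6q^2t\,g(x)}$, which is finite and bounded over $x$ in any compact set because $g$ is continuous. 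The theorem then delivers strong completeness.

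The main obstacle is precisely this final step: the integrability criterion needs the exponential bound for the \emph{large} constant $c=6q^2t$, so one must really use \eqref{eq:deriv-cond2} in its $c$-dependent form (coefficient $\tfrac c2$) rather than only the displayed $c\le 1$ case, and it is that version that must be verified in applications. The Jensen-in-time reduction is the one non-routine bridge between the single-time exponential estimate and the time-integrated exponential needed downstream; the localization-and-Fatou argument is routine once the martingale property on $[0,\tau_n]$ has been recorded.
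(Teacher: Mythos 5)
The paper itself gives no proof of this proposition: it is quoted verbatim (up to transcription) from \cite[Lemma 6.1]{Li:94a}, so there is no in-paper argument to compare against. Your reconstruction is the standard proof of that lemma and is essentially correct: the It\^o computation for $e^{cg(x_t)}$ is right, the localization at $\tau_n=\inf\{t:\|x_t\|\ge n\}$ makes the stopped stochastic integral a true martingale (because the integrand is bounded on $\{\|x\|\le n\}$ -- it is the integrand, not the martingale, that is bounded, but this is all you need), and the Gr\"onwall--Fatou step correctly yields $\mathbb E[e^{cg(x_t)}\1_{t<\zeta}]\le e^{Kt}e^{cg(x_0)}$. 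The Jensen-in-time reduction of the hypothesis $\sup_{x\in K}\mathbb E\exp\bigl(6q^2\int_0^t g(\phi_s(x))\1_{s<\zeta}\,\dd s\bigr)<\infty$ to single-time exponential moments is exactly how the exponential-integrability condition of the quoted theorem is verified in \cite{Li:94a}.

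The one point you flag is real, but it is a looseness of the statement rather than a gap in your argument: as written, \eqref{eq:deriv-cond2} carries the coefficient $\tfrac12$ on $\sum_k\langle\nabla g,\sigma_k\rangle^2$ while the It\^o drift of $e^{cg}$ produces $\tfrac c2$, so the displayed hypothesis only gives the drift bound for $c\le 1$, whereas the strong-completeness step needs $c=6q^2t$, which is large and moreover varies with $t$. The intended reading (consistent with \cite[Lemma 6.1]{Li:94a} and with how Corollary~\ref{cor:strong-comp-old} is then derived) is that \eqref{eq:deriv-cond2} is imposed on the function $cg$ -- equivalently with $\tfrac c2$ in place of $\tfrac12$ -- the free multiplicative constant being absorbed into the choices $g(x)=c(1+\|x\|^2)$, $c\log(1+\|x\|^2)$, etc.; with that reading your proof goes through verbatim with $K=K(c,C)$, and also note that the quoted theorem takes $g\ge 0$, which the indicator manipulation uses implicitly. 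So: correct proof, correctly identified transcription issue; nothing further is missing.
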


As an illustration, applying the above proposition with the choices
$g(x)=c(1+\|x\|^2)$, $g = c$, $g(x) = c\log (1 + \|x\|^2)$, $g(x)=c(1+\|x\|^2)^\epsilon$ and
$g(x)=ce^{1+\|x\|^2}$ for some constant $c>0$, yields the following concrete criteria.
\begin{corollary}\label{cor:strong-comp-old}\cite[Theorem 6.2 \& Corollary 6.3]{Li:94a}
  Assume that $\sigma, b\in C^2$. Then \eqref{eq:sde-intro} is strongly complete if one of the
  following conditions holds for all
  $k=1,\dots, m$, $x, v\in \R^d$.
  \begin{enumerate}
  \item[\textnormal{(1)}] \textbf{Bounded coefficients:}  
    \[
    \|D\sigma_k(x)\|^2 \lesssim 1+\|x\|^2, \quad
    \langle D b(x)v, v\rangle \lesssim 1+\|x\|^2,
    \]
    and
    \[
    \langle x, b(x)\rangle + \sum_{k=1}^m\|\sigma_k(x)\|^2
      + \sum_{k=1}^m \langle x, \sigma_k(x)\rangle^2 \lesssim 1.
    \]

  \item[\textnormal{(2)}] \textbf{Bounded derivatives:}  
    \[
    \|D\sigma_k(x)\|^2 \le C, \quad
    \langle D b(x)v, v\rangle \le C,
    \]
    for some constant $C>0$.

  \item[\textnormal{(3)}] \textbf{Linear growth:}  
    \[
    \|D \sigma_k(x)\|^2 \lesssim 1+\log(1+\|x\|^2), \quad
    \langle D b(x)v, v\rangle \lesssim 1+\log(1+\|x\|^2),
    \]
    and
    \[
    \langle x, b(x)\rangle + \sum_{k=1}^m\|\sigma_k(x)\|^2
      + \sum_{k=1}^m \langle x, \sigma_k(x)\rangle^2 \lesssim 1+\|x\|^2.
    \]

  \item[\textnormal{(4)}] \textbf{Polynomial growth/decay:}  
    For some $\epsilon \in (0,\infty) \setminus \{1\}$,
    \[
    \|D\sigma_k(x)\|^2 \lesssim (1+\|x\|^2)^\epsilon, \quad
    \langle D b(x)v, v\rangle \lesssim (1+\|x\|^2)^\epsilon,
    \]
    and
    \[
    \langle x, b(x)\rangle + \sum_{k=1}^m \|\sigma_k(x)\|^2
      + 2(\epsilon-1) \sum_{k=1}^m\frac{\langle x, \sigma_k(x)\rangle^2}{1+\|x\|^2}
      \lesssim (1+\|x\|^2)^{1-\epsilon}.
    \]

  \item[\textnormal{(5)}] \textbf{Exponential decay at infinity:}  
    \[
    \|D\sigma_k(x)\|^2 \lesssim e^{1+\|x\|^2}, \quad
    \langle D b(x)v, v\rangle \lesssim e^{1+\|x\|^2},
    \]
    and
    \[
    \langle x, b(x)\rangle + 2\sum_{k=1}^m \|\sigma_k(x)\|^2
      + 2\sum_{k=1}^m \frac{\langle x, \sigma_k(x)\rangle^2}{1+\|x\|^2}
      \lesssim e^{-\|x\|^2}.
    \]
\end{enumerate}
\end{corollary}

Recent work on strong completeness of SDEs has focused on growth assumptions on the drift 
 and lowering regularity requirements for the coefficients. In particular, when $\sigma, b\in C^1$,  Fang, Imkeller, and Zhang \cite{Fang:07} showed that part (3) of Corollary \ref{cor:strong-comp-old} remains valid.  Scheutzow and Schulze \cite{Scheutzow:17} further improved the results of Li \cite{Li:94a} by removing the derivative condition on $b$ and replacing it with a one-sided local Lipschitz condition.  In the additive noise case, they also proved that
 $$\dd x_t=b(x_t)dt+\dd W_t$$ is strongly complete if $b : \R^d\to \R^d$ has
at most radial linear growth with at most quadratic rotation. 

For SDEs satisfying a non-uniform ellipticity condition, Chen and Li \cite{Chen:14} reduced the regularity requirements on the coefficients to Sobolev classes and established the existence of a global flow of Sobolev class.

For non-regular drifts in the additive noise case, Krylov and Röckner \cite{Krylov:05} established well-posedness of solutions in the $L^q([0,T]; L^p(\R^d; \R^d))$-framework and proved continuity in probability with respect to the initial condition. Fedrizzi and Flandoli \cite{Fedrizzi:11} provided a new proof and strengthened the result, while  Anzeletti, L\^e, and Ling \cite{Anzeletti:23} established path-by-path uniqueness, stability, and strong completeness. By a localization argument, we extend these results to unbounded drifts (see Proposition \ref{thm:additive-ext}) in the local integrability framework. 


For (not necessarily strictly) elliptic SDEs, strong completeness has also been established in Zhang \cite{Zhang-16}, Xie and Zhang \cite[Theorem. 1.2]{Xie-Zhang-16}, and Ling, Scheutzow, and Vorkastner \cite[Theorem. 4.8]{Ling:21}, under the assumption that $b$ and $D \sigma_k$ are locally integrable, together with additional structural conditions.

Complementing the the Sobolev-flow result in Chen and Li \cite{Chen:14}, Jentzen, Kuckuck, Muller, Gronbac, and Yaroslavtseva \cite{Jentzen-Kuckuck-Muller-Gronbach-Yaroslavtseva-22} constructed additive SDEs with smooth drifts whose derivatives grow at most polynomially, but whose solution maps are not locally Lipschitz, nor even locally Hölder in their initial values.
 In contrast, Jetzen and Kuckuck \cite{Jentzen21} showed that any additive SDE satisfying a Lyapunov-type condition (ensuring existence and uniqueness) and having a drift with first derivatives of at most polynomial growth is at least logarithmically Hölder continuous in its initial data.
Finally, in the SPDE context, strong completeness has been studied in Marinelli, Pr\'ev\^ot, and R\"ockener \cite{Marinelli-10}.

\subsubsection{Differentiability in \texorpdfstring{$L^p$}{Lp}}

Closely related is the concept of differentiability in $L^p$ with respect to the initial data,
which often serves as a starting point for applying Kolmogorov's criterion to obtain continuity in the initial point.

However, the solution of a complete SDE with smooth coefficients need not be continuous in $L^2$,
let alone differentiable. For example, consider the drift vector field $b(x_1, x_2)=(x_1x_2, -x_1^2)$
and the two dimensional SDE $\dd x_t=b(x_t)\dd t+ x_t\dd W_t$.  Hairer, Hutzenthaler, and Jetzen \cite{Hairer:15} showed that the solution flow to this SDE is not locally Lipschitz continuous in $L^2$ with respect to its initial conditions. By contrast, Cox, Hutzenthaler, and Jentzen \cite{cox:24} proved that Lipschitz continuity in $L^2$ does hold for $\dd x_t=b(x_t)\dd t+\sigma(x_t)\dd W_t$
when $\sigma$ is bounded and Lipschitz continuous.

In a related direction, Chak \cite{Chak} studied the existence of twice differentiable solutions to Kolmogorov equations and weak convergence rates of order one for numerical approximations.
The $L^p$ continuity of solutions with respect to the initial condition has also been investigated by 
Pr\'ev\^ot and R\"ockner \cite{Prevot-Rockner} for both SDEs and SPDEs.

\subsection{Rough differential equations}

An RDE with sufficiently smooth coefficients admits a local solution from any initial point (see, for example, the book by Friz and Hairer \cite{Friz:20}). 
In the linear case, global existence was shown in Lyons \cite{Lyons:94}. For nonlinear coefficients, one typically requires that the coefficients are bounded and have bounded derivatives in order to ensure global existence. 

For unbounded coefficients, global existence theorems for RDEs
of the form $\dd x_t =\sigma(x_t) \dd \mathbf{X}_t$ were studied in Davie \cite{Davie:07} using a discrete approximation (Euler Scheme) of the solution.
Lejay \cite{Lejay:09} established global existence under the assumptions that $\sigma$ satisfies a sufficiently slow growth condition and has a bounded derivative that is Hölder continuous of sufficiently high order. Later, in Lejay \cite{Lejay:12}, using again an Euler scheme, he proved a global existence theorem under the conditions that $D \sigma$ is bounded and that $D \sigma(\cdot) \sigma(\cdot)$ is globally H\"older continuous.
A concise proof of this result was recently presented in the lecture notes  Caravanenna and Zambotti \cite{Caravenna:25} in the special case where both $D \sigma$ and $D(D \sigma(\cdot),\sigma(\cdot))$ are bounded. A related result can be found in Bailleul and Catellier \cite{Bailleul-Catellier} where bounded derivatives are also assumed.

Finally, in the case where a drift term is present in the RDE (as in our setting), i.e.,
$\dd x_t = b(x_t) \dd t + \sigma(x_t) \dd \mathbf{X}_t$, 
a global existence result was established by Riedel and Scheutzow \cite{Riedel:16} in the weakly geometric case, assuming linear growth of the drift term and $\sigma \in C^{3+}_b$ (so that derivatives up to order $3$ are bounded). In the context of branched rough paths, a special case of an RDE with 
super-linear inward-pointing drift was studied by  Bonnefoi, Chandra, Moinat, and Weber \cite{bonnefoi2022}.

\section{ODEs with forcing and additive SDEs}\label{sec:ODE}

Throughout this section, let $\gamma : \R_+ \to H$ and $b : \R_+ \times H \to H$ be measurable mappings. 

\subsection{Lower bound on the escape rate of ODEs with forcing}

Lemma \ref{lem:fast-growth} provides a useful bound, used repeatedly in this paper, on the time required for a deterministic process to reach specified level sets.


\begin{definition}\label{def:sol}
  We consider the ODE with driving curve $\gamma$
  \begin{equation}\label{eq:ode-intro}
    \dd x_t = b(t, x_t)\dd t + \dd \gamma_t.
  \end{equation}
  A solution with initial condition $(s, x) \in \R_+ \times H$ is a pair $(x_t, \xi)$ such that
  \begin{equation}\label{integral}
      x_t = x + \int_s^t b(s, x_r) \dd r + \gamma_t - \gamma_s, \qquad \forall t \in [s, \xi).
    \end{equation}
  It is called a maximal solution if  $\xi < \infty$ implies that $\limsup_{t \uparrow \xi} \|x_t\| = \infty$.
  The solution is global if $\xi = \infty$.
\end{definition}
\begin{assumption}\label{cond:perp}
 Let $b$ satisfy Assumption~\ref{as:lin-growth} with control $f$, and suppose:
  \begin{longlist}
    \item[(A.1)] (bounded at the origin) 
    There exists $a > 0$ such that, for all $T > 0$,
      \begin{equation}\label{eq:bound-at-zero}
        b_a^T := \sup_{t \in [0,T]} \sup_{\|x\| \le a} \|b(t, x)\| < \infty.
      \end{equation}
    \item[(A.2)] (orthogonal control) 
    There exists some $\beta>0$, such that 
      \begin{equation}\label{eq:perp}
        \left|\left\langle y, b(t, x)\right\rangle\right| \le (1 + \|x\|)f(\|x\|)^\beta,
      \end{equation}
      for all $x, y, t$ with $x \perp y$ and $ \|y\| = 1$.
  \end{longlist}
\end{assumption}
We remark that condition~\eqref{eq:perp} can be equivalently stated as 
$$\left\|\Proj_{x^\perp} b(t, x) \right\| \le (1 + \|x\|) f(\|x\|)^\beta, \qquad \text{ for all } x, t,$$
where $\Proj_{x^\perp}$ denotes the projection onto  $\Span{\{x\}}^\perp$.
Condition~\eqref{eq:bound-at-zero} rules out singularities at the origin. For example,
\(x \mapsto -\frac{\mathbb{1}_{\{x \neq 0\}}}{\|x\|^2}x\) satisfies both 
Assumption~\ref{as:lin-growth} and condition~\eqref{eq:perp}, but is excluded here.

%


\begin{lemma}\label{lem:ode-est}
Let $b$ satisfy Assumption \ref{cond:perp} and let $\gamma$ satisfy $\gamma_0 = 0$. 
  If $(x_t)_{t \in [0, \xi)}$ is a solution to \eqref{eq:ode-intro} with initial condition
  $(0, x_0)$, where $x_0 \in H$, then for any $T \in [0, \xi)$,
  \begin{align*}
    \|x_T - \gamma_T\|^2 \le & \|x_0\|^2 + 2 \int_0^T f(\|x_t\|)\left(\|x_t\| + \|\gamma_t\|\right) \dd t \\
                             & + 2 \int_0^T \|\gamma_t\|
    \left(\sup_{\substack{\|x\| \le \|\gamma_t\|}}\|b(t, x)\| + (1 + \|x_t\|) f(\|x_t\|)^\beta\right) \dd t.
  \end{align*}
\end{lemma}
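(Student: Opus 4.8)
The plan is to compute $\frac{d}{dt}\|x_t - \gamma_t\|^2$ and integrate, estimating the resulting terms using the decomposition of $b$ into its radial and orthogonal components. First I would set $y_t := x_t - \gamma_t$, so that $\dot y_t = b(t, x_t)$ for a.e.\ $t$ (in the integral sense of Definition~\ref{def:sol}), and observe that $y$ is absolutely continuous on $[0,T]$ for any $T < \xi$ since $b$ is locally bounded along the (continuous, hence locally bounded) path $x$. Then
$$\|y_T\|^2 = \|y_0\|^2 + 2\int_0^T \langle y_t, b(t, x_t)\rangle \,\dd t = \|x_0\|^2 + 2\int_0^T \langle x_t - \gamma_t, b(t, x_t)\rangle \,\dd t,$$
using $\gamma_0 = 0$ so $y_0 = x_0$.

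Next I would split $\langle x_t - \gamma_t, b(t, x_t)\rangle = \langle x_t, b(t, x_t)\rangle - \langle \gamma_t, b(t, x_t)\rangle$ and handle the two pieces separately. For the first piece, write $\langle x_t, b(t, x_t)\rangle = \|x_t\|\,\langle \frac{x_t}{\|x_t\|}, b(t, x_t)\rangle \le \|x_t\|\,f(\|x_t\|)$ by Assumption~\ref{as:lin-growth} (taking care of the trivial case $x_t = 0$). For the second piece, bound $|\langle \gamma_t, b(t, x_t)\rangle| \le \|\gamma_t\|\,\|b(t, x_t)\|$; this is the term that needs the orthogonal growth bound. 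Here I would decompose $b(t, x_t)$ into its component along $x_t$ and its component orthogonal to $x_t$: the radial part contributes $\le \|\gamma_t\|\,f(\|x_t\|)$ (again by Assumption~\ref{as:lin-growth}, in absolute value one only controls $\langle \frac{x}{\|x\|}, b\rangle$ from above, but $\|\gamma_t\| \ge 0$ so this is fine after also noting the lower direction — actually one should simply use $\|b(t,x_t)\| \le |\langle \tfrac{x_t}{\|x_t\|}, b\rangle| + \|b^\perp\|$ and the orthogonal estimate~\eqref{eq:perp}), and the orthogonal part $b^\perp(t, x_t)$ satisfies $\|b^\perp(t, x_t)\| \le (1 + \|x_t\|)f(\|x_t\|)^\beta$ by the remark following Assumption~\ref{cond:perp}. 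Combining, $\|b(t, x_t)\| \le f(\|x_t\|) + (1 + \|x_t\|)f(\|x_t\|)^\beta$ when $x_t \neq 0$; and when $\|x_t\| \le \|\gamma_t\|$ one can instead simply bound $\|b(t, x_t)\| \le \sup_{\|x\|\le\|\gamma_t\|}\|b(t,x)\|$. Taking the minimum-flavoured bound — more precisely using $\|b(t,x_t)\| \le \sup_{\|x\|\le\|\gamma_t\|}\|b(t,x)\| + (1+\|x_t\|)f(\|x_t\|)^\beta$ together with the separate radial estimate — yields exactly
$$|\langle \gamma_t, b(t,x_t)\rangle| \le \|\gamma_t\|\,f(\|x_t\|) + \|\gamma_t\|\Big(\sup_{\|x\|\le\|\gamma_t\|}\|b(t,x)\| + (1+\|x_t\|)f(\|x_t\|)^\beta\Big),$$
and assembling the two pieces gives the claimed inequality after collecting the $f(\|x_t\|)(\|x_t\| + \|\gamma_t\|)$ terms.

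The main technical obstacle, I expect, is handling the apex $x_t = 0$ where $\frac{x_t}{\|x_t\|}$ is undefined and $t \mapsto \|x_t\|^2$ may fail to be differentiable in a clean way — but this is precisely why the statement is phrased with $\|x_T - \gamma_T\|^2$ rather than $\|x_T\|^2$: one differentiates the genuinely $C^1$ (indeed absolutely continuous) quantity $\|y_t\|^2$ and only needs Assumption~\ref{as:lin-growth} pointwise, with the convention $\langle \frac{x}{\|x\|}, b\rangle \le f(\|x\|)$ read as the statement that $\langle x, b(t,x)\rangle \le \|x\| f(\|x\|)$, which holds trivially at $x=0$. A secondary point to be careful about is the integrability of $t\mapsto \sup_{\|x\|\le\|\gamma_t\|}\|b(t,x)\|$ on $[0,T]$, which follows from Assumption~(A.1): for $t\le T$ one has $\|\gamma_t\|$ bounded by some $R$, and $\sup_{t\le T}\sup_{\|x\|\le R}\|b(t,x)\| < \infty$ by combining (A.1) with local boundedness of $b$ (locally Lipschitz in $x$, and the stated uniform-in-$t$ control), so the integrand is bounded, hence integrable, on $[0,T]$. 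With these caveats the computation is routine.
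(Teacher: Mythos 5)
Your setup (differentiating $\|y_t\|^2$ with $y_t = x_t-\gamma_t$, handling $x_t=0$ separately, integrability via (A.1)) matches the paper, but the core estimate has a genuine gap. You split $\<x_t-\gamma_t, b(t,x_t)\> = \<x_t,b(t,x_t)\> - \<\gamma_t,b(t,x_t)\>$, bound the first piece by $\|x_t\|f(\|x_t\|)$, and then bound the second piece by $\|\gamma_t\|\,\|b(t,x_t)\|$ with the claim $\|b(t,x_t)\|\le f(\|x_t\|)+(1+\|x_t\|)f(\|x_t\|)^\beta$ (when $\|x_t\|>\|\gamma_t\|$, where the small-ball supremum is unavailable). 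That claim is false under the hypotheses: Assumption~\ref{as:lin-growth} is only a \emph{one-sided} bound on the radial component, so $|\<\tfrac{x_t}{\|x_t\|},b(t,x_t)\>|$ is not controlled at all — the whole point of \eqref{cond:cos} is to allow arbitrarily strong inward drift. Concretely, take $b(x)=-g(\|x\|)\tfrac{x}{\|x\|}$ with $g\equiv 0$ near the origin and $g(\|x_t\|)$ enormous, and $\gamma_t$ parallel to $x_t$ with $\|\gamma_t\|<\|x_t\|$: Assumption~\ref{cond:perp} holds with any $f$, yet $|\<\gamma_t,b(t,x_t)\>| = g(\|x_t\|)\|\gamma_t\|$ blows past your claimed bound. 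Your parenthetical ``$\|\gamma_t\|\ge 0$ so this is fine after also noting the lower direction'' does not repair this, since no lower bound on the radial component is assumed.

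The reason the lemma is nevertheless true is a cancellation that your splitting destroys. The radial part of $-\<\gamma_t,b\>$ must be kept together with $\<x_t,b\>$: writing $\alpha_t=\<\gamma_t,\tfrac{x_t}{\|x_t\|^2}\>$ and decomposing $\gamma_t=\alpha_t x_t+(\gamma_t-\alpha_t x_t)$ (i.e.\ decomposing $\gamma_t$, not $b$), one gets $\<x_t-\gamma_t,b\> = (1-\alpha_t)\<x_t,b\> - \<\gamma_t-\alpha_t x_t,b\>$. When $\alpha_t\le 1$ the coefficient $1-\alpha_t\ge 0$, so the one-sided bound \eqref{cond:cos} applies and yields $(\|x_t\|+\|\gamma_t\|)f(\|x_t\|)$ (in particular, a huge inward radial drift only helps); when $\alpha_t>1$ one has $\|x_t\|<\|\gamma_t\|$, so $\|b(t,x_t)\|\le\sup_{\|x\|\le\|\gamma_t\|}\|b(t,x)\|$ and the supremum term absorbs it. The orthogonal piece $\gamma_t-\alpha_t x_t\perp x_t$ is where \eqref{eq:perp} enters, giving $\|\gamma_t\|(1+\|x_t\|)f(\|x_t\|)^\beta$. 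With this regrouping the rest of your argument goes through; without it, the step bounding $|\<\gamma_t,b(t,x_t)\>|$ cannot be justified.
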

\begin{proof}
  In the case where $t \le T$ and $x_t \neq 0$, we set 
  $\alpha_t = \<\gamma_t, \frac{x_t}{\|x_t\|^2}\>,$
  so that $\alpha_t x_t$ is the projection of $\gamma_t$ onto the direction of $x_t$. Defining
  $$y_t = x_t - \gamma_t = x_0 + \int_0^t  b(s, x_s) \dd s,$$
  we compute
  \begin{align*}
    \dv{\|y_t\|^2}{t}\bigg|_{t = t} & = 2 \<x_t - \gamma_t, b(t, x_t)\>                                               \\
                                    & = 2 (1 - \alpha_t)\<x_t,  b(t, x_t)\> - 2\<\gamma_t - \alpha_t x_t, b(t, x_t)\>.
  \end{align*}
  By considering separately the cases $\alpha_t \le 1$ and $\alpha_t > 1$, and using the growth
  bounds from the assumption, the first term is bounded by
  $$2 (1 - \alpha_t)\<x_t,  b(t, x_t)\> \le 2(\|x_t\| + \|\gamma_t\|) f(\|x_t\|) +
    2 \|\gamma_t\| \sup_{\|x\| \le \|\gamma_t\|} \|b(t, x)\|.$$
  Here, we used the fact that $\|x_t\| < \|\gamma_t\|$ for $\alpha_t > 1$. 
  
  When $\alpha_t \le 1$, we have $1 - \alpha_t \ge 0$, and noting that 
  $\|\alpha_t x_t\| \le \|\gamma_t\|$, applying \eqref{cond:cos} gives
    \begin{align*}
    (1 - \alpha_t)\<x_t,  b(t, x_t)\> & = (\|x_t\| - \alpha_t \|x_t\|) \<\frac{x_t}{\|x_t\|},  b(t, x_t)\> \\
                                      & \le (\|x_t\| + \|\gamma_t\|) f(\|x_t\|).
  \end{align*}
  Moreover, since $\gamma_t - \alpha_t x_t $ is orthogonal to $ x_t$, the second term is bounded by
  $$- 2\<\gamma_t - \alpha_t x_t, b(t, x_t)\> \le
    2\|\gamma_t\| (1 + \|x_t\|) f(\|x_t\|)^\beta.$$
  Thus, we obtain the inequality
  \begin{equation}\label{eq:diff-y}
    \begin{aligned}
      \dv{\|y_t\|^2}{t}\bigg|_{t = t}
      \le & 2\|\gamma_t\|\left((1 + \|x_t\|)f(\|x_t\|)^\beta
      + \sup_{\|x\| \le \|\gamma_t\|} \|b(t, x)\|\right)     \\
          & + 2f(\|x_t\|)\left(\|x_t\| + \|\gamma_t\|\right).
    \end{aligned}
  \end{equation}

In the case $x_t = 0$, we have
  $$\dv{\|y_t\|^2}{t}\bigg|_{t = t} = 2\<\gamma_t, b(t, 0)\>
    \le 2\|\gamma_t\| \sup_{\|x\| \le \|\gamma_t\|} \|b(t, x)\|$$
so the same inequality,  \eqref{eq:diff-y}, still holds. Since $\|y_0\|^2 = \|x_0\|^2$, 
integrating both sides of this differential inequality, yields the desired bound.
\end{proof}

The above lemma will allow us to control the size of $x_t$ in terms of the volatility of $\gamma$.
To this end, we introduce the following notion of interactive regularity for $\gamma$.

\begin{definition}\label{def:interactive}
  For $T > 0$, denote $\tau^r = \inf \{t : \|x_t\| \ge r\} \wedge T$. 
  We say that $\gamma$ satisfies an interactive relation at level $k \in \mathbb{N}$ if
  \begin{equation}\label{eq:local-holder}
    \sup_{0 \le t \le \sigma^k \wedge \delta_k}\|\gamma_{\tau^{Rk} + t} - \gamma_{\tau^{Rk}}\|
    \le K \delta_k^{\beta - 1},
  \end{equation}
  where $\sigma^k := \tau^{R(k + 1)} - \tau^{Rk}$
  for some $\beta \in (1, 2)$ and positive numbers $K$, $R$, $\delta_k$. 
\end{definition}
Here, $K$ can be interpreted as a weak form of the $(\beta - 1)$-H\"older norm of $\gamma$. 
The quantities $\delta_k$ will serve as lower bounds for the time required for a solution to the ODE to cross each level.
Thus, we expect non-explosion if the sequence $(\delta_k)$ can be chosen to be not-summable. We also want to consider a number $R$ such that~\eqref{eq:local-holder} holds uniformly over $k$. For our application, it is sufficient if $R$ can be taken larger than $R_0$ where
\begin{equation}\label{eq:R0-def}
  R_0 = (2K + 1) + \sqrt{(2K + 1)^2 + 2K(2 + b^T_a)},
\end{equation}
and \(b^T_a\) is the constant appearing in Assumption~\ref{cond:perp}.

\begin{lemma}\label{lem:fast-growth}
  Let $(x_t)_{t \in [0, \xi)}$ be a maximal solution to \eqref{eq:ode-intro} and $T > 0$.
  Assume that
  \begin{enumerate}
    \item[(i)]\label{lem:fast-growth1} $b$ satisfies Assumptions~\ref{cond:perp} with $\beta \in (1, 2)$ and control $f$.
        \item[(ii)]\label{cond:gamma-reg} The interactive relation~\eqref{eq:local-holder} holds 
    with $\beta$ from \rm{(i)} at some level $k \in \mathbb{N}$, with the constants in~\eqref{eq:local-holder} satisfying
      \begin{equation}
        \delta_k \le \min\{1, f(R(k + 1))^{-1}, (a / K)^{\frac{1}{\beta - 1}}\},
      \end{equation}
      and $R > R_0 \vee \|x_0\|$  and $R_0$ is defined by~\eqref{eq:R0-def}.
  \end{enumerate}
  Then, denoting $\sigma^k$ and $\tau^{Rk}$ as in Definition~\ref{def:interactive}, we have 
  $$\sup_{0 \le s \le \delta_k \wedge \sigma^k \wedge T} \|x_{s + \tau^{Rk}}\| < R(k + 1).$$
In other words, $\tau^{R(k + 1)} - \tau^{Rk} > \delta_k$ whenever $\tau^{R(k + 1)} \le T$.

  Furthermore, if there exists some $k_0 \in \N$ such that $\sum_{k = k_0}^\infty \delta_k > T$
  and the above assumptions holds with constants $K, R$ chosen uniformly for all $k \ge k_0$, then 
  the solution exists at least up to time $T$, that is, $\xi > T$.
\end{lemma}
\begin{proof}
  We shall show that $$\sup_{0 \le s \le \delta_k \wedge \sigma^k \wedge T} \|x_{s + \tau^{Rk}}\| < R(k + 1)$$
  whenever the following quadratic inequality is satisfied:
  \begin{equation}\label{eq:R-bd}
    (Rk)^2 + 2(K + 1)R(k + 1) + 2K(2 + b^T_a) \le (R(k + 1) - K)^2.
  \end{equation}
  Here $b^T_a$ is the bound in Assumptions~\ref{cond:perp}.

  We further observe that for any $k \in \N$,   Equation~\eqref{eq:R-bd} is equivalent to
  $$R^2 - 2(2K + 1)\frac{k + 1}{2k + 1}R + \frac{K^2 - 2K(2 + b^T_a)}{2k + 1} \ge 0,$$
  which is satisfied for all  $R > R_0$, where $R_0$ is the positive root of the quadratic equation
  $$R_0^2 - 2(2K + 1)R_0 - 2K(2 + b^T_a) = 0.$$

  We proceed to prove the upper bound for  $\sup_{0 \le s \le \delta_k \wedge \sigma^k \wedge T} \|x_{s + \tau^{Rk}}\|$.
  As the conclusion is trivial if $\inf\{t : \|x_t\| \ge kR\} \ge T$, we may assume that $\tau^{Rk} < T$.
  Define
  $$y_s = x_{s + \tau^{Rk}} - (\gamma_{s + \tau^{Rk}} - \gamma_{\tau^{Rk}}).$$
  We shall control its norm via the previous lemma.

  Replacing $x_t$ by $x_{t + \tau^{Rk}}$ and $\gamma_t$ by $\Delta \gamma_t := \gamma_{t + \tau^{Rk}} - \gamma_{\tau^{Rk}}$,
Lemma~\ref{lem:ode-est} implies that, for all $s \le \delta_k \wedge \sigma^k \wedge T$,
  \begin{alignat*}{2}
    \|y_s\|^2
    \le & \|x_{\tau^{Rk}}\|^2 &  & + 2\int_{\tau^{Rk}}^{\tau^{Rk} + s} f(\|x_t\|)\left(\|x_t\| + \|\Delta \gamma_t\|\right) \dd t \\
        &                     &  & + 2 \int_{\tau^{Rk}}^{\tau^{Rk} + s} \|\Delta \gamma_t\|
    \left(b^T_a + (1 + \|x_t\|)f(\|x_t\|)^\beta \right)\dd t,
  \end{alignat*}
  where we had used the assumption that $\|\Delta \gamma_t\| \le K \delta_k^{\beta - 1} \le a.$
Since $s \le \delta_k$, we obtain
  \begin{align*}
    \|y_s\|^2 \le & (Rk)^2 + 2f(R(k + 1))(R(k + 1) + K \delta_k^{\beta - 1})\delta_k                 \\
                  & + 2K \delta_k^{\beta} b^T_a + 2K \delta_k^{\beta}(1 + R(k + 1))f(R(k + 1))^\beta \\
    \le           & (Rk)^2 + 2R(k + 1) + 2K + 2 K b^T_a + 2K (1 + R(k + 1))f(R(k + 1))^\beta,
  \end{align*}
  where in the last line, we have used the bound  $\delta_k \le 1 \wedge f(R(k + 1))^{-1}$. 
  This further simplifies~to
  $$  \|y_s\|^2 \le  (Rk)^2 + 2(K + 1)R(k + 1) + 2K(2 + b^T_a).$$

  Since,
  $$\sup_{0 \le s \le \delta_k \wedge \sigma^k \wedge T} \|x_{s + \tau^{Rk}}\|
    \le \sup_{0 \le s \le \delta_k \wedge \sigma^k \wedge T} \|y_s\| + K,$$
 it follows from \eqref{eq:R-bd}  that
  $\sup_{0 \le s \le \delta_k \wedge \sigma^k \wedge T} \|x_{s + \tau^{Rk}}\| \le R(k+1)$,
  which completes the proof of the first assertion.

  Finally, in the case where $\sum_{k \ge k_0} \delta_k > T$, suppose for contradiction that $\xi \le T$.
Since $\tau^{R(k + 1)} < \xi$ for any $k \ge k_0$, we have 
  $$T \ge \xi = \sum_{k = k_0}^\infty (\tau^{R(k + 1)} - \tau^{Rk}) > \sum_{k = k_0}^\infty \delta_k > T,$$
  which is a contradiction.
\end{proof}

This lemma allows us to obtain an explicit control $\psi^{-1}$ for the growth of $\|x_t\|$.
Taking $\delta_k$ as in Equation~\eqref{eq:delta-k},  define $\psi : [0, T] \to \R_+$ by
$$\psi(N) = \sum_{k \le N + 1} \delta_k, \qquad N \in \N,$$ 
and, for $t \in [0, T] \setminus \N$, define $\psi(t)$
by linear interpolation. Since $\psi$ is continuous and strictly increasing, it is invertible, and we denote its inverse by $\psi^{-1}$.

\begin{corollary}\label{cor:global-holder}
  Let $\alpha \in (0, 1]$ and $\beta \in (1, 1 + \alpha]$.
  Suppose Assumption~\ref{cond:perp} holds with $\beta$, and let $\gamma\in C^{\alpha}_{\loc}(\R_+; H)$.
  Then, any maximal solution $(x_t)_{t \in [0, \xi)}$ of \eqref{eq:ode-intro} is, in fact, a global solution.
  Moreover, for any fixed $T > 0$,
  \begin{equation}\label{eq:sol-bdd}
    \sup_{0 \le s \le t} \|x_s\| \lesssim \psi^{-1}(t), \qquad \forall t \le T
  \end{equation}
 where the implicit constant in $\lesssim$ depends only on $T$ and $\|x_0\|$.
\end{corollary}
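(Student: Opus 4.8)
The plan is to deduce this corollary from Lemma~\ref{lem:fast-growth} by verifying that a H\"older-continuous driving curve automatically satisfies the interactive relation~\eqref{eq:local-holder} with a suitable choice of $\delta_k$, and then checking that the resulting sequence $(\delta_k)_{k\ge k_0}$ is non-summable so that the escape-to-infinity alternative is ruled out on every $[0,T]$. First I would fix $T>0$ and work on the compact interval $[0,T]$, where $\gamma$ has a finite H\"older seminorm $K_0 := \|\gamma\|_{\alpha;[0,T]}$. For any stopping level $\tau^{Rk}$ and any increment $t\le \delta_k$, we have $\|\gamma_{\tau^{Rk}+t}-\gamma_{\tau^{Rk}}\|\le K_0 t^\alpha \le K_0\,\delta_k^\alpha$; since $\beta\le 1+\alpha$, i.e. $\beta-1\le\alpha$, and $\delta_k$ will be taken $\le 1$, this is bounded by $K_0\,\delta_k^{\beta-1}$. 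Thus~\eqref{eq:local-holder} holds with $K=K_0$ (uniformly in $k$, which is the crucial point), provided $\delta_k\le 1$.

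Next I would choose $\delta_k$ to saturate the constraint in hypothesis (ii) of Lemma~\ref{lem:fast-growth}: set
\begin{equation}\label{eq:delta-k}
  \delta_k = \min\bigl\{1,\; f(R(k+1))^{-1},\; (a/K)^{\frac{1}{\beta-1}}\bigr\},
\end{equation}
and fix $R > R_0 \vee \|x_0\|$ with $R_0$ given by~\eqref{eq:R0-def} (note $b_a^T<\infty$ by Assumption~\ref{cond:perp}(A.1), so $R_0$ is finite). The first and third terms in the minimum are positive constants independent of $k$, so for all large $k$ the binding term is $f(R(k+1))^{-1}$, and $\delta_k = f(R(k+1))^{-1}$ for $k\ge k_1$, say. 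Now the non-summability: since $f$ is a control function, $\int_r^\infty f(s)^{-1}\,ds=\infty$, and because $f$ is non-decreasing we can compare the series $\sum_k f(R(k+1))^{-1}$ to the integral $\frac1R\int f(s)^{-1}\,ds$ — specifically $f(R(k+1))^{-1}\ge \frac1R\int_{Rk}^{R(k+1)} f(s)^{-1}\,ds$, so $\sum_{k\ge k_1}\delta_k \ge \frac1R\int_{Rk_1}^\infty f(s)^{-1}\,ds = \infty$. Hence for any $T$ there is $k_0\ge k_1$ with $\sum_{k=k_0}^\infty \delta_k > T$, and Lemma~\ref{lem:fast-growth} gives $\xi>T$; letting $T\to\infty$ yields $\xi=\infty$, i.e. the maximal solution is global.

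For the quantitative bound~\eqref{eq:sol-bdd}, I would track the level-crossing times. The first part of Lemma~\ref{lem:fast-growth} says $\tau^{R(k+1)}-\tau^{Rk}>\delta_k$ whenever $\tau^{R(k+1)}\le T$; summing, if $\|x_t\|$ ever reaches level $R(N+1)$ at some time $t\le T$ then $t \ge \tau^{R(N+1)} \ge \sum_{k=0}^{N}\delta_k' $ for the appropriate initial segment (with a harmless adjustment for the first few indices where $R>\|x_0\|$ already places us above level $0$). In other words, with $\psi$ defined by $\psi(N)=\sum_{k\le N+1}\delta_k$ and linear interpolation in between — $\psi$ is continuous and strictly increasing, hence invertible — we get $\|x_t\| < R(\lfloor\psi^{-1}(t)\rfloor + 2) \le R(\psi^{-1}(t)+2) \lesssim \psi^{-1}(t)$, with the implicit constant depending on $R$ and hence on $T$ through the choice forced by $R_0$ and $\|x_0\|$. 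Taking the supremum over $s\le t$ gives~\eqref{eq:sol-bdd}.

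The only genuinely delicate point is the \emph{uniformity in $k$} of the constants $K$ and $R$ in the interactive relation: Lemma~\ref{lem:fast-growth}'s global conclusion requires $K$ and $R$ to be chosen once and for all for $k\ge k_0$, and it is precisely the translation-invariance of the H\"older seminorm (the bound $\|\gamma_{u+t}-\gamma_u\|\le K_0 t^\alpha$ does not depend on $u$) together with the monotonicity of $f$ (which makes $\delta_k$ decreasing, so $\delta_k\le1$ for all large $k$ once it holds for one) that delivers this. Everything else — the comparison of the series to the divergent integral, the invertibility of $\psi$, the bookkeeping with the first few levels — is routine. The remaining verification, that an $\alpha$-H\"older curve on $[0,T]$ for $\beta\le 1+\alpha$ indeed satisfies~\eqref{eq:local-holder}, is exactly the content of Lemma~\ref{lem:cont-holder} proved in Appendix~\ref{sec:cont-holder}, which I would cite rather than reprove here.
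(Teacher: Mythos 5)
Your proposal follows essentially the same route as the paper: fix $T$, take $K$ to be a H\"older constant of $\gamma$ on $[0,T]$ (the paper uses $\|\gamma\|_{\beta-1;[0,T]}$, you use $\|\gamma\|_{\alpha;[0,T]}$ together with $\delta_k\le 1$ and $\beta-1\le\alpha$, which is equivalent), set $\delta_k$ as in \eqref{eq:delta-k}, and invoke the second assertion of Lemma~\ref{lem:fast-growth}, with \eqref{eq:sol-bdd} coming from the level-crossing bookkeeping $\sup_{t\le\psi(N)}\|x_t\|\lesssim N$. Two small slips, neither fatal: first, your integral comparison is stated in the wrong direction --- since $f$ is non-decreasing, $f(R(k+1))^{-1}\le \frac1R\int_{Rk}^{R(k+1)}f(s)^{-1}\,ds$, so to get non-summability you should compare with the interval to the \emph{right}, namely $f(R(k+1))^{-1}\ge \frac1R\int_{R(k+1)}^{R(k+2)}f(s)^{-1}\,ds$, after which divergence of $\int^\infty f(s)^{-1}\,ds$ gives $\sum_k\delta_k=\infty$ as you intend. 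Second, the closing attribution of the interactive relation \eqref{eq:local-holder} to Lemma~\ref{lem:cont-holder} is incorrect: that lemma concerns continuity of $\epsilon\mapsto\|A\|_{\alpha';[t_0,t_0+\epsilon]}$ for two-parameter processes and is only needed in the rough-path section; here no such result is required, and your direct estimate $\|\gamma_{\tau^{Rk}+t}-\gamma_{\tau^{Rk}}\|\le K_0 t^\alpha\le K_0\delta_k^{\beta-1}$ in the first paragraph already does the job, so the miscitation is superfluous rather than a gap.
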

\begin{proof}
  Fixing $T > 0$, set $K = \|\gamma\|_{\beta - 1, [0, T]}$, $R =2R_0 \vee \|x_0\|$ with $R_0$ defined by~\eqref{eq:R0-def}.
  Define
  \begin{equation}\label{eq:delta-k}
    \delta_k = \min\{1, f(R(k + 1))^{-1}, (a / K)^{\frac{1}{\beta - 1}}\}.
  \end{equation}
  Non-explosion then follows directly from the second assertion of Lemma~\ref{lem:fast-growth}. Furthermore,  the bound~\eqref{eq:sol-bdd} follows from the fact that $\sup_{s \le \psi(N)}\|x_s\| \lesssim N$ where the constant depends on $R_0$, which is given by $T$.
\end{proof}

\subsection{Applications to SDEs with additive noise}

\begin{definition}
  Let $b : \R_+ \times \R^d \to \R^d$ and $X$ be a stochastic process on $\R^d$. We
  consider the  equation
  \begin{equation}\label{eq:ode-per}
    \dd x_t = b(t, x_t) \dd t + \dd X_t.
  \end{equation}
  \begin{itemize}
    \item We say that \eqref{eq:ode-per} is path-by-path unique if for almost every $\omega$,
          the ODE $$\dd x_t = b(t, x_t) \dd t + \dd X_t(\omega)$$ has a unique maximal solution from
          any initial condition.
    \item We say this equation is strongly complete if for almost every $\omega$,
          there exists a global solution flow $(t, x) \mapsto \phi_t(x, \omega)$ of the ODE
          $\dd x_t = b(t, x_t) \dd t + \dd X_t(\omega)$ which is jointly continuous.
  \end{itemize}
\end{definition}

We note that path-by-path uniqueness is stronger than pathwise uniqueness (i.e. uniqueness of strong
solutions), as it requires uniqueness of solutions that are not necessarily adapted.

Consider a drift $b$ satisfying a Prodi-Serrin condition, namely
 $$b \in L^q([0,T]; L^p(\R^d; \R^d)) \qquad \hbox{with}\; 2 / q + d / p < 1.$$
 Anzeleti, Lê, and Ling \cite[Theorem 2.5]{Anzeletti:23} established 
path-by-path uniqueness and strong completeness under this global condition, generalizing the results of \cite{Krylov:05}. 

In the critical case $2/q + d/p = 1$, the existence of a strong solution was first shown for almost all initial 
conditions \cite{BFGM}. More recently, it was proved to hold for all initial conditions, together
with strong completeness \cite{Rockner-Zhao-2021}. Our results, combined with localizing
the results of \cite{Anzeletti:23}, naturally leads to the following assertion. 

\begin{proposition}\label{thm:additive-ext}
  Let $p, q \in (2, \infty)$ satisfy $2/q + d/p < 1$. If  $b \in L^q([0,T]; L^p_{\loc}(\R^d; \R^d))$ satisfies
  Assumption \ref{cond:perp} with $\beta < \frac{3}{2}$, then the SDE
  \begin{equation}\label{eq:SDEext}
    \dd x_t = b(t, x_t) \dd t + \dd W_t
  \end{equation}
  is both strongly complete and path-by-path unique. 

  If, on the other hand, $d \ge 3$ and $2/q + d/p = 1$, the SDE \eqref{eq:SDEext} is strongly complete.
\end{proposition}
\begin{proof}
  By Example~\ref{cor:loc-KR}, it suffices to show that we have uniform non-explosion 
  which follows directly by applying Corollary~\ref{cor:global-holder}.
\end{proof}

{\it Remark.} In Section 4 we discuss SDEs with multiplicative noise, the counter part of the above Proposition 
in this setting is given in Corollary \ref{cor:strong-complete}.

We also note that under Assumption~\ref{cond:perp}, the drift $b$ is allowed to have singularities 
away from zero in the spatial variable, so the assumption does not imply
\(b \in L^q([0,T]; L^p_{\loc}(\mathbb{R}^d; \mathbb{R}^d))\). 
For example, fix some \(z \neq 0\),  and consider
\[b(t, x) = \begin{cases}
  - \frac{x}{\|x - z\|^d} & \text{if } x \neq z, \\
  0 & \text{if } x = z.
\end{cases}\]
This vector field satisfies Assumption \ref{as:lin-growth} and condition \eqref{eq:perp} for any control function
 $f$, since 
\(\<x, b(t, x)\> \le 0\) and, for any \(y \perp x\),  we have \(\<y, b(t, x)\> = 0\). Moreover,
choosing \(a = \|z\| / 2\),  gives
\[\sup_{t} \sup_{\|x\| \le a} \|b(t, x)\| \le (a / 2)^{1 - d}.\]
 Thus, Assumption~\ref{cond:perp} is satisfied. However, as 
\(\int_{\|x\| \le 2\|z\|} \|b(t, x)\|^p = \infty\) for any \(p \ge 1\), 
\(b \notin L^q([0,T]; L^p_{\loc}(\mathbb{R}^d; \mathbb{R}^d))\). 

\medskip


For mixed additive noise, we have the following result.
\begin{proposition}
  Let $(B^{H_i})_{i = 1}^n$ be a sequence of independent fractional Brownian motions on $\R^d$.
  Denoting $H = \bigvee_{i = 1}^n H_i$ and $h = \bigwedge_{i = 1}^n H_i$,
  let $b \in C^\alpha_{\loc}(\R^d; \R^d)$ with $\alpha > \left(\frac{3}{2} - \frac{1}{2 H}\right)_+$. Then,
  under Assumption~\ref{cond:perp} where $\beta < 1 + h$, the SDE
  \begin{equation}
    \dd x_t = b(x_t) \dd t + \sum_{i = 1}^n \dd B^{H_i}_t,
  \end{equation}
  is strongly complete and path-by-path unique.
\end{proposition}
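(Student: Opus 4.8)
The plan is to combine the pathwise non-explosion estimate of Corollary~\ref{cor:global-holder} with the (imported) local well-posedness theory for additive fractional noise, exactly as in the proof of Proposition~\ref{thm:additive-ext}. Throughout, write $\gamma := \sum_{i=1}^n B^{H_i}$, so that the SDE is the ODE $\dd x_t = b(x_t)\dd t + \dd\gamma_t(\omega)$ for each fixed realization; note $\gamma_0 = 0$, and Assumption~\ref{cond:perp}(A.1) holds automatically since $b\in C^\alpha_{loc}$ is continuous.

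First I would record the regularity of the driving path. Since $B^{H_i}$ is almost surely $(H_i-\epsilon)$-Hölder on compacts for every $\epsilon>0$, the superposition $\gamma$ is almost surely in $C^{\alpha'}_{loc}(\R_+;\R^d)$ for every $\alpha'<h=\bigwedge_{i=1}^n H_i$ (the roughest component dictates the regularity). Because $\beta<1+h$, fix $\alpha'\in[\beta-1,h)$; then on a full-measure set $\Omega_1$ we have simultaneously $\gamma(\omega)\in C^{\alpha'}_{loc}$ and $\beta\in(1,1+\alpha']$. On $\Omega_1$, Corollary~\ref{cor:global-holder} applies to the deterministic equation: every maximal solution of $\dd x_t = b(x_t)\dd t + \dd\gamma_t(\omega)$ is global, and $\sup_{s\le t}\|x_s\|\lesssim\psi^{-1}(t)$ with a constant that, inspecting the proof of Corollary~\ref{cor:global-holder}, depends on $\omega$ only through $\|\gamma\|_{\beta-1;[0,T]}$ and on the initial condition only through an upper bound for $\|x_0\|$ (via the radius $R>R_0\vee\|x_0\|$). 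This is the uniform-on-compacts non-explosion statement.

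It remains to produce, for almost every $\omega$, a jointly continuous local solution flow together with path-by-path local uniqueness; non-explosion then upgrades these to a global continuous flow. Here I invoke the regularization-by-noise theory for SDEs with additive (mixed) fractional Brownian noise: when $b\in C^\alpha_{loc}(\R^d;\R^d)$ with $\alpha>\left(\frac{3}{2}-\frac{1}{2H}\right)_+$ and $H=\bigvee_{i=1}^n H_i$, the SDE $\dd x_t=b(x_t)\dd t+\sum_{i=1}^n\dd B^{H_i}_t$ admits, on a full-measure set $\Omega_2$, a unique maximal solution from every initial condition (path-by-path local uniqueness) and a maximal solution flow that is jointly continuous on its open domain in $(t,x)$; the exponent $\left(\frac{3}{2}-\frac{1}{2H}\right)_+$ is precisely the threshold required by that local theory for a stable, continuous flow. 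Existence of a maximal solution from local ones is handled as in Appendix~\ref{sec:extension} (cf. Example~\ref{cor:loc-KR}). Since the solution concept there coincides with Definition~\ref{def:sol} in the additive case, on $\Omega_1\cap\Omega_2$ this flow agrees with the maximal solution of the pathwise ODE, which by the previous paragraph never explodes; moreover the bound $\sup_{s\le t}\|x_s\|\lesssim\psi^{-1}(t)$, being uniform over compact sets of starting points for the fixed $\gamma$, pins the lifetime below locally uniformly in $(t,x)$, so the continuous local flow extends to a jointly continuous flow on all of $\R_+\times\R^d$. Hence on $\Omega_1\cap\Omega_2$ the pathwise ODE has a unique global solution from every point and a jointly continuous global solution flow, which is exactly path-by-path uniqueness and strong completeness.

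The main obstacle is the local step: establishing the continuous-flow and path-by-path-uniqueness statement for the mixed noise $\sum_i B^{H_i}$ at the stated threshold is genuine regularization-by-noise analysis (stochastic sewing along the Gaussian path plus a buckling/fixed-point argument in nonlinear Young spaces), and is taken from the literature rather than reproved here. Within the present argument, the delicate bookkeeping is the passage from the almost-sure continuous local flow to an almost-sure continuous global flow: one must intersect the relevant full-measure sets, verify that \eqref{eq:sol-bdd} is uniform over compact sets of initial conditions for a fixed realization of $\gamma$ (which is what the $R$-dependence in the proof of Corollary~\ref{cor:global-holder} gives), and conclude that the continuous local flow, whose lifetime is thereby bounded below locally uniformly, is in fact global.
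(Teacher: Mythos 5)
Your proposal is correct and follows essentially the same route as the paper: uniform non-explosion comes from Corollary~\ref{cor:global-holder} applied pathwise to $\gamma=\sum_i B^{H_i}\in C^{\alpha'}_{loc}$ for $\alpha'<h$ with $\beta\le 1+\alpha'$, and the local well-posedness, path-by-path uniqueness and continuity in the initial data are imported from the regularization-by-noise literature (the paper cites \cite[Theorem 1.12, Corollary 4.6, Theorem 3.7]{Catellier:16}, after localizing so that $b$ may be taken bounded) and then globalized via the localization machinery of Appendix~\ref{sec:extension}. The only difference is presentational: you spell out the gluing of the continuous local flow into a global one in more detail, while the paper compresses this into the localization remark.
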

\begin{proof}
  As before uniform non-explosion follows directly from Corollary~\ref{cor:global-holder} and thus, by localization,
  $b$ can be assumed to be bounded. Then, the conclusion follows from \cite[Theorem 1.12]{Catellier:16}
  where the assumption follows directly from the estimates \cite[Corollary 4.6 and Theorem 3.7]{Catellier:16}.
\end{proof}

In fact, for uniform non-explosion, it is sufficient to assume $\gamma$ has finite $p$-variation for some $p > 1$.
However, in contrast to H\"older continuous driving noises, we no longer have a bound on $\sup_{s \le t} \|x_s\|$.
We remark that, if $\gamma$ is in addition continuous, it can be re-parametrized
so that it is H\"older continuous. Thus, non-explosion follows from the previous corollary. The following
lemma treats the case where $\gamma$ is allowed to have jumps.

\begin{lemma}\label{lem:global-pvar}
  Let $p > 1$ and  $\beta \in (1, 2)$ satisfying $\beta \le 1 + \frac{1}{p}$. Suppose $b$ satisfies
  Assumption~\ref{cond:perp} with the above $\beta$ and $\gamma \in C^{\pvar}_{\loc}(\R_+; H)$.
  Then, any maximal solution of the equation~\eqref{eq:ode-intro}
  is a global solution.
\end{lemma}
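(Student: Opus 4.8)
Following the remark preceding the statement, for \emph{continuous} $\gamma$ one may reparametrise $\gamma$ by its $p$-variation to make it $\tfrac1p$-H\"older and then quote Corollary~\ref{cor:global-holder}. My plan for the general case is to reduce to that situation by two successive time changes: one that ``fills in'' the jumps of $\gamma$ by linear segments, producing a \emph{continuous} finite $p$-variation path, and a second that reparametrises this path so that it becomes $\tfrac1p$-H\"older. Throughout, fix $T>0$ and set $V=\|\gamma\|_{\pvar;[0,T]}<\infty$. The only property of the jumps I shall use is $\sum_{u}\|\gamma_u-\gamma_{u-}\|^p\le V^p$, which is immediate from the definition of $p$-variation.

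\emph{Filling in the jumps.} Enumerate the jump times $(u_n)\subseteq(0,T]$ and put $\epsilon_n=\|\gamma_{u_n}-\gamma_{u_n-}\|^p$, so $S:=\sum_n\epsilon_n\le V^p<\infty$. Construct a continuous nondecreasing surjection $\theta_1\colon[0,T+S]\to[0,T]$ that is an order isomorphism off the jump set, is constant $\equiv u_n$ on an interval $I_n$ of length $\epsilon_n$, with $\theta_1'=1$ off $\bigcup_n I_n$ and $\theta_1'=0$ on each $I_n$. Let $\gamma_1$ agree with $\gamma\circ\theta_1$ off $\bigcup_n I_n$ and interpolate linearly from $\gamma_{u_n-}$ to $\gamma_{u_n}$ on $I_n$; then $\gamma_1$ is continuous and, since each $I_n$ contributes $\epsilon_n$ to the $p$-variation, $\|\gamma_1\|_{\pvar;[0,T+S]}^p\lesssim V^p<\infty$. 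With $b_1(s,x):=b(\theta_1(s),x)\theta_1'(s)$, the fact that $0\le\theta_1'\le1$ and $f\ge0$ forces~\eqref{cond:cos},~\eqref{eq:perp} and (A.1) to pass to $b_1$ with the \emph{same} $f$ and $\beta$; moreover, using absolute continuity of $\theta_1$ and the change of variables $r=\theta_1(\cdot)$, a maximal solution $x$ of~\eqref{eq:ode-intro} on $[0,\xi)$ corresponds to a solution $x_1$ of $\dd x_1=b_1(s,x_1)\dd s+\dd\gamma_1$ which is unbounded near the $\theta_1$-image of $\xi$ exactly when $x$ is unbounded near $\xi$ (on each $I_n$, $x_1$ just moves linearly by $\gamma_{u_n}-\gamma_{u_n-}$).

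\emph{H\"older-ising and concluding.} Put $W(s)=\|\gamma_1\|_{\pvar;[0,s]}^p+s$, continuous and strictly increasing with $W(s')-W(s)\ge s'-s$, and let $\theta_2:=W^{-1}$, which is $1$-Lipschitz. Setting $\gamma_2=\gamma_1\circ\theta_2$, superadditivity of $s\mapsto\|\gamma_1\|_{\pvar;[0,s]}^p$ gives
$$\|\gamma_2(u')-\gamma_2(u)\|\le\|\gamma_1\|_{\pvar;[\theta_2(u),\theta_2(u')]}\le\bigl(W(\theta_2(u'))-W(\theta_2(u))\bigr)^{1/p}=(u'-u)^{1/p},$$
so $\gamma_2\in C^{1/p}$. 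With $b_2(u,x):=b_1(\theta_2(u),x)\theta_2'(u)$ and $0\le\theta_2'\le1$, again~\eqref{cond:cos},~\eqref{eq:perp},(A.1) hold for $b_2$ with the same $f,\beta$; extending $\gamma_2$ by a constant and $b_2$ by $0$ past $W(T+S)$ yields $\gamma_2\in C^{1/p}_{loc}(\R_+;H)$ and $b_2$ satisfying Assumption~\ref{cond:perp} on $\R_+$. Since $p>1$ we have $\alpha:=\tfrac1p\in(0,1)$, and by hypothesis $\beta\in(1,1+\alpha]$, so Corollary~\ref{cor:global-holder} applies: every maximal solution of $\dd x_2=b_2(u,x_2)\dd u+\dd\gamma_2$ is global. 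If~\eqref{eq:ode-intro} had a maximal solution with $\xi\le T$, carrying it through the two time changes would produce a maximal solution $x_2$ exploding at the finite time $W(T+S)$, a contradiction; hence $\xi>T$, and since $T$ was arbitrary, every maximal solution is global.

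The main work lies in two routine but careful points. First, verifying that the two time changes preserve Assumption~\ref{cond:perp} with \emph{unchanged} $f$ and $\beta$: this is exactly where the bounds $\theta_1',\theta_2'\le1$ are essential — a time change accelerating time would spoil the radial bound~\eqref{cond:cos} — and where the constraint $\beta\le1+\tfrac1p$ enters, matching the H\"older exponent $\tfrac1p$ produced in the second step. Second, making the solution correspondence precise: checking the change-of-variables identities for the drift integrals and that a finite explosion time is transported faithfully across the reparametrisation $W$, which is only continuous (its inverse $\theta_2$, however, is Lipschitz, which is what makes the substitution legitimate). Once these are in place the conclusion is simply a citation of Corollary~\ref{cor:global-holder}; note also that this argument explains why one no longer controls $\sup_{s\le t}\|x_s\|$, since the estimate~\eqref{eq:sol-bdd} for $x_2$ only bounds $\sup\|x_s\|$ in terms of reparametrised time, i.e.\ in terms of $\|\gamma\|_{\pvar;[0,t]}$ rather than $t$.
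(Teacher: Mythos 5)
Your proposal is correct, but it takes a genuinely different route from the paper. The paper proves the lemma directly with Lemma~\ref{lem:fast-growth}: taking $K=1$ and $\delta_k=\min\{1,f(R(k+1))^{-1},(a/K)^{1/(\beta-1)}\}$, it isolates the levels $k_n$ at which the interactive relation~\eqref{eq:local-holder} fails; at such a level the driver must move by at least $\delta_{k_n}^{\beta-1}$ during the crossing, so $\sum_n\delta_{k_n}\lesssim 1+\|\gamma\|_{\pvar;[0,T]}^{p}<\infty$ (this is exactly where $\beta-1\le 1/p$ enters, via bounding the $\tfrac{1}{\beta-1}$-variation by the $p$-variation), while every other level costs time at least $\delta_k$ and $\sum_k\delta_k=\infty$, contradicting $\xi\le T$. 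You instead reduce to the already-proved H\"older case (Corollary~\ref{cor:global-holder}) by two time changes — filling in jumps with linear segments, then reparametrising by $p$-variation — and you correctly identify the key point that the drift, rescaled by $\theta'\in[0,1]$, retains Assumption~\ref{cond:perp} with the same $f$ and $\beta$, the constraint $\beta\le 1+\tfrac1p$ matching the resulting H\"older exponent. Both arguments use the hypothesis in the same place; the difference is overhead versus modularity. Your reduction relies on standard but nontrivial facts not established in the paper (that jump-filling yields a continuous path whose $p$-variation is comparable to $\|\gamma\|_{\pvar}$, that $s\mapsto\|\gamma_1\|_{\pvar;[0,s]}^p$ is continuous for continuous $\gamma_1$, and the change-of-variables and explosion-time bookkeeping across the two reparametrisations), and as written it only handles left jumps, whereas an element of $C^{\pvar}_{loc}$ is merely regulated and may have $\gamma_{u-}$, $\gamma_u$, $\gamma_{u+}$ all distinct — easily repaired by inserting two segments per discontinuity. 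The paper's argument avoids all of this and applies verbatim to arbitrary finite $p$-variation paths, at the price of redoing the level-crossing count; yours is more conceptual, reuses Corollary~\ref{cor:global-holder} as a black box, and makes transparent why the quantitative bound~\eqref{eq:sol-bdd} is lost in the $p$-variation setting (it survives only in the reparametrised clock).
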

\begin{proof}
  It is sufficient to show that $\xi \ge T$ for any $T > 0$ fixed. Let $R$ satisfy \eqref{eq:R-bd}
  for all $k \in \N$, where $K$ is taken to be~$1$. Denote
  $$\delta_k = \min\{1, f(R(k + 1))^{-1}, (a / K)^{\frac{1}{\beta - 1}}\}.$$
  Let $(\delta_{k_n})$ denote the ordered subset of $(\delta_k)$ for which
  $$\sup_{0 \le t \le \delta_{k_n} \wedge (\tau^{R(k_n + 1)} - \tau^{Rk_n})}
    \|\gamma_{\tau^{Rk_n} + t} - \gamma_{\tau^{Rk}}\| \ge \delta_{k_n}^{\beta - 1}.$$
  Suppose $\xi < T$, so $\tau^{Rk} <T$ for all $k \in \N$. Then by Lemma~\ref{lem:fast-growth},
  $\tau^{R(k + 1)} - \tau^{Rk} > \delta_k$ for all $k \notin \{k_n\}_{n \in \N}$. If
  $\sum_{n = 0}^\infty \delta_{k_n} < \infty$, then
  $$\xi = \sum_{k = 0}^\infty (\tau^{R(k + 1)} - \tau^{Rk}) \ge
    \sum_{k \notin \{k_n\}_{n \in \N}} (\tau^{R(k + 1)} - \tau^{Rk})
    \ge \sum_{k \notin \{k_n\}_{n \in \N}} \delta_k = \infty,$$
  where in the last step we used $\sum_{k = 0}^\infty \delta_k = \infty$. This contradicts with
  the assumption $\xi < T$ and we conclude non-explosion.

  It remains to prove that $\sum_{n = 0}^\infty \delta_{k_n} < \infty$. For any $n \in \N$,
  there exists $t_n \le \delta_{k_n} \wedge (\tau^{R(k_n + 1)} - \tau^{Rk_n})$ such that
  $\delta_{k_n} \le\|\gamma_{\tau^{Rk_n} + t_n} - \gamma_{\tau^{Rk_n}}\|^{\f 1 {\beta-1}} + 2^{-n}$.
  Then, denoting $\mathcal{P}^T$ the set of all partitions of $[0, T]$,
  \begin{align*}
    \sum_{n = 0}^\infty \delta_{k_n}
     & \le 1 + \sum_{n = 0}^\infty  \|\gamma_{\tau^{Rk_n} + t_n} - \gamma_{\tau^{Rk_n}}\|^{\f 1 {\beta-1}} \\
     & \le 1 + \sup_{P \in \mathcal{P}^T} \sum_{(t_i, t_{i + 1}) \in P}
    \|\gamma_{t_{i + 1}} - \gamma_{t_i}\|^{\frac{1}{\beta - 1}}
    =  1 + \|\gamma\|_{\frac{1}{\beta - 1}\mathrm{\text{-}var}; [0, T]}^{\frac{1}{\beta - 1}}                              \\
     & \lesssim 1 + \|\gamma\|_{\pvar; [0, T]}^p < \infty,
  \end{align*}
  as required.
\end{proof}

The above criterion for uniform non-explosion for SDEs driven by processes with finite $p$-variation allows us 
to obtain a non-explosion result to equations driven by L\'evy processes.
\begin{proposition}\label{prop:levy}
  Let $Z$ be a L\'evy process in $\R^d$ with L\'evy triple $(a, \Sigma, \nu)$ where
  $\nu(\{0\}) = 0$ and
  $$\int_{\R^d} (1 \wedge \|z\|^p) \nu(\dd z) < \infty.$$
  Let $b \in \CB(\R_+ \times \R^d; \R^d)$ be locally Lipschitz in
  its second argument.  Then the SDE \begin{equation}\label{eq:SDElevy}
    \dd x_t = b(t, x_t) \dd t + \dd Z_t.
  \end{equation}  admits a unique adapted global solution $\phi$ which is furthermore locally Lipschitz in
  its initial conditions, provided one of the following conditions holds
  \begin{enumerate}
    \item [(i)]  $p=2$ and $b$ satisfies Assumption~\ref{cond:perp} for some $\beta < \frac{3}{2}$.
    \item[(ii)] $\Sigma=0 $, $p \in (1, 2)$, and $b$ satisfies Assumption~\ref{cond:perp} for some $\beta < 1 + \frac{1}{p}$.  \end{enumerate}
\end{proposition}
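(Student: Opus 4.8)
The plan is to solve the equation pathwise as an ODE driven by the càdlàg curve $Z(\omega)$, reduce the whole statement to non-explosion, and then read off non-explosion from Lemma~\ref{lem:global-pvar} once we know that $t\mapsto Z_t(\omega)$ has finite $p'$-variation on compacts for an exponent $p'$ compatible with $\beta$. Since $b$ is Borel and, being locally Lipschitz in $x$ and bounded near the origin by (A.1), locally bounded on $[0,T]\times\R^d$, while $b(t,\cdot)$ is locally Lipschitz, the Picard--Lindelöf theorem gives, for every $\omega$ for which $Z(\omega)$ is càdlàg, a unique maximal solution $(x_t)_{t\in[0,\xi(x_0,\omega))}$ of $\dd x_t=b(t,x_t)\dd t+\dd Z_t(\omega)$ from any initial point $x_0$, in the sense of Definition~\ref{def:sol}. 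The flow $x_0\mapsto x_t$ is continuous on its domain, the Picard iterates are $(\mathcal F_t)$-measurable because $Z$ is adapted, so the solution is adapted, and pathwise uniqueness forces every adapted solution of the SDE to coincide with it. Hence it only remains to show that a.s.\ $\xi(x_0,\omega)=\infty$ for every $x_0$; this promotes the maximal flow to a global flow $(t,x)\mapsto\phi_t(x,\omega)$, càdlàg in $t$, continuous in $x$, adapted, and unique among adapted solutions.

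For the non-explosion step we invoke Lemma~\ref{lem:global-pvar} with $\gamma=Z(\omega)$: we must find $p'\in(1,\infty)$ with $\beta\le 1+\tfrac1{p'}$ such that almost surely $Z\in C^{p'\text{-var}}_{loc}(\R_+;\R^d)$ (Corollary~\ref{cor:global-holder} is not directly usable because the Lévy path has jumps). This is where the hypotheses on the Lévy triple enter, via the classical description of the $p$-variation of Lévy sample paths. In case (ii), $\Sigma=0$ and $\int(1\wedge\|z\|^p)\nu(\dd z)<\infty$ with $p\ge 2$; decomposing $Z$ on $[0,T]$ into its drift, its finitely many jumps of size $>1$, and the compensated small-jump martingale, the first two parts have finite total variation while, by Bretagnolle's criterion, the last has finite $p$-variation a.s.\ precisely because $\int_{\|z\|\le 1}\|z\|^p\nu(\dd z)<\infty$ and $p\ge1$; thus $Z\in C^{p\text{-var}}_{loc}$ a.s.\ and we take $p'=p$, admissible since $\beta\le 1+\tfrac1p$. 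In case (i), $\Sigma$ is arbitrary and $\int(1\wedge\|z\|^2)\nu<\infty$ holds automatically, so by the same criterion the jump part of $Z$ has finite $2$-variation a.s., while the Gaussian part has finite $q$-variation a.s.\ for every $q>2$; hence $Z\in C^{q\text{-var}}_{loc}$ a.s.\ for every $q>2$, and since $\beta<\tfrac32$ we may pick $q\in(2,\tfrac1{\beta-1}]$, so that $\beta\le 1+\tfrac1q$, and take $p'=q$. In either case Lemma~\ref{lem:global-pvar} gives $\xi(x_0,\omega)=\infty$, and the exceptional null set is the single event that $Z$ lacks the relevant finite $p'$-variation, since the hypotheses of that lemma on $\gamma$ do not involve $x_0$; hence non-explosion holds simultaneously for all initial conditions. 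Combining this with the first step yields the proposition: off a null set the pathwise ODE has a global solution from every $x_0$, so $\phi$ is defined on all of $\R_+\times\R^d$, is adapted and càdlàg in $t$ and continuous in $x$, and is the unique adapted solution.

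The step I expect to be the real work is the verification of the finite $p'$-variation of $Z$ and the matching of exponents, in particular the behaviour at the threshold $p=2$: the Gaussian component has infinite $2$-variation and only finite $q$-variation for $q>2$, which is exactly why case (i) must demand the \emph{strict} inequality $\beta<\tfrac32$, whereas in the purely non-Gaussian case (ii) the integrability $\int_{\|z\|\le1}\|z\|^p\nu(\dd z)<\infty$ delivers finite $p$-variation on the nose and permits $\beta=1+\tfrac1p$. Everything else is a routine combination of Picard--Lindelöf with the non-explosion lemma already in hand.
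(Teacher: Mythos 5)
Your proof is correct, and its core coincides with the paper's: non-explosion is reduced to Lemma~\ref{lem:global-pvar} applied pathwise to $\gamma = Z(\omega)$, using the a.s.\ finite $p'$-variation of L\'evy sample paths (Bretagnolle), with the exceptional null set depending only on $Z$ and hence uniform over initial conditions. You in fact spell out the exponent bookkeeping that the paper leaves implicit: in case (i) the Gaussian component forces $p' = q > 2$, hence the strict bound $\beta < \tfrac32$ via $q \in (2, \tfrac{1}{\beta-1}]$, while in case (ii) the absence of a Gaussian part allows $p' = p$ and $\beta \le 1 + \tfrac1p$. Where you genuinely diverge is the local well-posedness step: the paper does not solve the random ODE by Picard--Lindel\"of directly, but invokes Example~\ref{cor:loc-levy}, i.e.\ it truncates $b$ to a globally Lipschitz $b^N$, applies Kunita's flow theorem for L\'evy-driven SDEs to the localized equations, and patches the resulting global Lipschitz flows into a maximal flow via Corollary~\ref{lem:loc-pbp-exist} in Appendix~\ref{sec:extension}. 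Your pathwise Picard--Lindel\"of plus Gronwall argument buys a more self-contained and elementary construction (and still yields adaptedness, uniqueness among adapted solutions, and continuity in the initial condition), whereas the paper's route delivers the Lipschitz dependence on initial data recorded in Theorem~\ref{thm:additive-main}(iii) essentially for free from Kunita. Two cosmetic caveats, neither fatal: your inference that $b$ is locally bounded from ``locally Lipschitz in the second argument'' plus (A.1) tacitly assumes the local Lipschitz constant is locally uniform in $t$ (the paper's Example~\ref{cor:loc-levy} makes the same tacit assumption when asserting $b^N$ is globally Lipschitz); and Bretagnolle's criterion is a statement for exponents in $(0,2]$, so in case (ii) with $p>2$ the correct justification is that a L\'evy process without Gaussian part already has finite $2$-variation (the integrability hypothesis being automatic), which implies finite $p$-variation -- your conclusion is unchanged.
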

\begin{proof}
  By Example~\ref{cor:loc-levy}, the SDE~\eqref{eq:SDElevy} admits a unique adapted local solution $\phi$
  which is locally Lipschitz in initial conditions. Thus, it suffices to show uniform non-explosion
  which follows directly from Lemma~\ref{lem:global-pvar} and the fact that
  $Z$ has sample paths in $ C^{\pvar}_{\loc}(\R_+; \R^d)$ \cite{Bretagnolle:72}.
\end{proof}

These applications are summarized in Theorem \ref{thm:additive-main}.


The method presented above also allows us to formulate a Montel-Tonelli type condition for
non-explosion. Unlike Lemma~\ref{lem:fast-growth} which considers the time required for the solution flow to move between fixed contours, we now study a family of contours whose centers evolve along the trajectory of a reference solution starting from a point 
 $x_0$ or which no explosion occurs, i.e. $\xi(x_0) = \infty$. By analyzing the evolution of the difference $x_s^x - x_s^{x_0}$,
we obtain conditions ensuring non-explosion for solutions starting from arbitrary initial points.


\begin{example}\label{ex:non-explosion}
  Let $(x_t^x)_{t \in [0, \xi(x))}$ be a maximal solution of \eqref{eq:ode-intro}, with
  initial condition $x~\in~H$. Suppose that there exists some $f : \R_+ \to \R_+$ as in Assumption~\ref{as:lin-growth} such that
  \begin{equation}\label{eq:tonelli}
    \langle b(x) - b(y), x - y \rangle \le f(\|x - y\|)\|x - y\|
  \end{equation}
  for all $x, y \in H$. Then $\xi(x) = \infty$ for all $x $, if $\xi(x_0) = \infty$ for some point $x_0$.

  In cases where the driving noise is a Brownian path, non-explosion and strong completeness can be
  deduced from non-explosion at a single point, as shown in \cite[Theorems 3.1, 4.1, and 6.2]{Li:94a}.
  Here, we employ a condition tailored to accommodate non-differentiable coefficients.

  To prove this claim, define
  $$\tau^k_m = \inf_{\|x - x_0\| \le m} \inf \{t : \|x_t^x - x_t^{x_0}\| \ge Rk\}.$$
  As  $\xi(x_0) = \infty$, we have $\inf_{\|x - x_0\| \le m} \xi(x) \ge \tau^k_m$ for all $k \in \N$.
  It suffices to show that, for some $R > 0$,
  $$\tau^{k + 1}_m - \tau^k_m \ge f(R(k + 1))^{-1}$$
  for all $m \in \N$. Fixing $m$,  write $\tau^k = \tau^k_m$ for simplicity.
  Set $\psi^k_s(x) = x_{\tau^k + s}^x - x_{\tau^k + s}^{x_0}$, we have
  \begin{align*}
    \dd\|\psi^k_s(x)\|^2 & = 2\langle b(x_{\tau^k + s}^x) - b(x_{\tau^k + s}^{x_0}),
    x_{\tau^k + s}^x - x_{\tau^k + s}^{x_0}\rangle\dd s                                                                         \\
                         & \le 2f(\|x_{\tau^k + s}^x - x_{\tau^k + s}^{x_0}\|)\|x_{\tau^k + s}^x - x_{\tau^k + s}^{x_0}\|\dd s.
  \end{align*}
  Take $R > 2 + \sqrt{3}$, then
  $(Rk)^2 + 2R(k + 1) \le (R(k + 1) - 1)^2$
  for all $k \in \N$.  Consequently, for all $s < f(R(k + 1))^{-1}$,
  \begin{align*}
    \|\psi^k_s(x)\|^2 
     & \le (Rk)^2 + 2sf(R(k + 1)) R(k + 1) \le (Rk)^2 + 2R(k + 1).
  \end{align*}
  Thus, $\sup_{0 \le s \le (\tau^{k + 1} - \tau^k) \wedge f(R(k + 1))^{-1}}
    \|x_{\tau^k + s}^x - x_{\tau^k + s}^{x_0}\| \le  R(k + 1) - 1$ and
  $$\sup_{\|x - x_0\| \le m} \left(\sup_{ s \le (\tau^{k + 1} - \tau^k) \wedge f(R(k + 1))^{-1}}
    \|x_{\tau^k + s}^x - x_{\tau^k + s}^{x_0}\| \right)< R(k + 1).$$
  This implies that $\tau^{k + 1} - \tau^k \ge f(R(k + 1))^{-1}$ as desired.
\end{example}

\subsection{Sharpness of Corollary~\ref{cor:global-holder}}

By utilizing a modified version of the ODE from \cite[Lemma 3.15]{cox:24}, we in this section show
that the criterion for non-explosion in Corollary~\ref{cor:global-holder} is sharp in the following sense.

\begin{proposition}[Counterexample]\label{pro:sharp}
  For any $\epsilon > 0$, there exists some $\alpha > 0$, $\gamma \in C^{(1 - \epsilon)\alpha}(\R_+; \R^2)$
  and a vector field $b \in \Lip_{\loc}(\R^2; \R^2)$ satisfying Assumption~\ref{cond:perp} with $\beta = 1 + \alpha$
  such that the solution to the ODE
  $$\dd x_t = b(x_t) \dd t + \dd \gamma_t$$
  explodes in finite time.

  More precisely, we will take $b(x) = \|x\|^{1 + \alpha}Jx$ with $J$ being the $2 \times 2$ matrix corresponding
  to the counter-clockwise rotation by $\pi / 2$ and we define $\gamma$ according to Equation~\eqref{eq:gamma-def}.
\end{proposition}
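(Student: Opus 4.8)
The plan is to prescribe the exploding trajectory $x_t$ directly and then \emph{read off} the driving path $\gamma$ from it. We may assume $\epsilon\in(0,1)$ (for $\epsilon\ge1$ the exponent $(1-\epsilon)\alpha$ is $\le0$, and the claim then follows a fortiori from any smaller value of $\epsilon$). I will choose $\alpha>0$ small and $p$ large; concretely $\alpha=\frac{\epsilon}{2(1-\epsilon)}$ and $p=\frac{2}{\alpha\epsilon}$ work, giving $p\alpha=\frac2\epsilon>2$ and, writing $\eta:=(1-\epsilon)\alpha=\frac\epsilon2$, the bookkeeping inequality
\[
  \eta\ \le\ \frac{p\alpha-1}{p(1+\alpha)} .
\]
Fix $T\in(0,1]$ and put, on $[0,T)$,
\[
  r(t)=(T-t)^{-p},\qquad \Theta(t)=\int_0^t r(u)^{1+\alpha}\,\dd u=\int_0^t (T-u)^{-p(1+\alpha)}\,\dd u ,
\]
the latter being smooth and strictly increasing with $\Theta(t)\to+\infty$ as $t\uparrow T$ since $p(1+\alpha)>1$. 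Set $x_t=r(t)(\cos\Theta(t),\sin\Theta(t))$, and write $\hat r_t=(\cos\Theta(t),\sin\Theta(t))$, $\hat\theta_t=(-\sin\Theta(t),\cos\Theta(t))$. Then $Jx_t=r(t)\hat\theta_t$, so $b(x_t)=\|x_t\|^{1+\alpha}Jx_t=r(t)^{2+\alpha}\hat\theta_t$, while $\dot x_t=\dot r(t)\hat r_t+r(t)\Theta'(t)\hat\theta_t=\dot r(t)\hat r_t+r(t)^{2+\alpha}\hat\theta_t$ by the choice $\Theta'=r^{1+\alpha}$. Hence $\dot x_t-b(x_t)=\dot r(t)\hat r_t$, so $x$ solves $\dd x_t=b(x_t)\,\dd t+\dd\gamma_t$ on $[0,T)$ with
\begin{equation}\label{eq:gamma-def}
  \gamma_t\de\int_0^t\dot r(u)\,\hat r_u\,\dd u=\int_0^t p(T-u)^{-p-1}(\cos\Theta(u),\sin\Theta(u))\,\dd u ,
\end{equation}
extended by $\gamma_t:=\lim_{s\uparrow T}\gamma_s$ for $t\ge T$; existence of this limit will be part of the estimate below.

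Several checks are immediate. The field $b(x)=\|x\|^{1+\alpha}Jx$ lies in $C^1(\R^2;\R^2)\subset\Lip_{loc}$: its derivative involves $\|x\|^{\alpha-1}$ times a quadratic in $x$, which extends continuously by $0$ to the origin because $\alpha>0$. By Picard--Lindelöf the equation has a unique maximal solution from $x_0:=r(0)\hat r_0\neq0$; since $x$ is a genuine $C^1$ solution on all of $[0,T)$ whereas $\|x_t\|=(T-t)^{-p}\to\infty$ as $t\uparrow T$, uniqueness forces $x$ to be that maximal solution and $T<\infty$ to be its explosion time. Moreover $\langle x,b(x)\rangle=0$, so Assumption~\ref{as:lin-growth} holds with any control, e.g.\ $f(s)=s$; and $\|b(x)\|=\|x\|^{2+\alpha}\le(1+\|x\|)\|x\|^{1+\alpha}=(1+\|x\|)f(\|x\|)^{1+\alpha}$ together with $\sup_{\|x\|\le\delta}\|b(x)\|<\infty$ yields Assumption~\ref{cond:perp} with $\beta=1+\alpha$. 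Note that this $\beta$ overshoots the threshold $1+\eta$ of Corollary~\ref{cor:global-holder} for a $C^\eta$ driver by exactly $\epsilon\alpha>0$; that discrepancy shrinking to $0$ as $\epsilon\downarrow0$ is the sense in which the corollary is sharp.

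The substance is to prove $\gamma\in C^{\eta}(\R_+;\R^2)$. The mechanism is oscillatory cancellation: although $\dot r(u)=p(T-u)^{-p-1}$ blows up as $u\uparrow T$, the unit vector $\hat r_u$ spins at rate $\Theta'(u)=(T-u)^{-p(1+\alpha)}$, ever faster near $T$, which damps the integral \eqref{eq:gamma-def}. I would write $\dot r_u\hat r_u=h(u)\,V'(u)$ with $h(u)=p(T-u)^{p\alpha-1}$ and $V(u)=(\sin\Theta(u),-\cos\Theta(u))$ --- so that $\|V\|\equiv1$, $V'=\Theta'\hat r$, and the exponents balance since $(p\alpha-1)-p(1+\alpha)=-p-1$ --- and integrate by parts once:
\[
  \gamma_t-\gamma_s=h(t)V(t)-h(s)V(s)-\int_s^t h'(u)V(u)\,\dd u ,\qquad 0\le s<t\le T .
\]
Since $p\alpha-1>1$, the function $h$ is Lipschitz on $[0,T]$, so $|h(t)-h(s)|$ and $\int_s^t|h'|$ are both $\lesssim|t-s|\lesssim|t-s|^\eta$ on bounded intervals (using $\eta<1$); away from $T$, $\Theta'$ is bounded and $\gamma$ is even Lipschitz, so the Hölder bound is only at stake for $s,t$ near $T$. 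There the sole delicate term is $h(s)\,(V(t)-V(s))$, which I would control by interpolating the two a priori bounds $\|V(t)-V(s)\|\le2$ and $\|V(t)-V(s)\|\le\Theta'(t)(t-s)=(T-t)^{-p(1+\alpha)}(t-s)$, splitting on whether $(T-t)^{-p(1+\alpha)}(t-s)$ is $\ge2$ or $<2$ and using that $T-s\asymp T-t$ when $t-s\lesssim(T-t)^{p(1+\alpha)}$ and $T-s\asymp\max\{T-t,\ t-s\}$ in general. In every case the leftover powers of $T-t$ (or of $t-s$) collapse to precisely the inequality $\eta\le\frac{p\alpha-1}{p(1+\alpha)}$ arranged at the start; for $|t-s|$ above a fixed constant one simply uses $\|\gamma\|_\infty<\infty$, and pairs with $s<T\le t$ reduce to $t=T$. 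The same computation exhibits $\lim_{s\uparrow T}\gamma_s$.

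The hard part is this last step: turning the slogan ``the rotation of $\hat r$ beats the growth of $\dot r$'' into an estimate uniform over \emph{all} pairs $s<t$ approaching the explosion time, since the crude bound $\|\gamma_t-\gamma_s\|\le\int_s^t|\dot r|=r(t)-r(s)$ is useless near $T$. The single integration by parts, moving a derivative off the fast-spinning $\hat r$ onto the small Lipschitz coefficient $h=\dot r/\Theta'$, is what makes it go through; and weighing the gain $\Theta'$ against the growth of $\dot r$ is exactly what forces $\alpha$ (equivalently $\beta-1$) to be small relative to $\epsilon$ --- i.e.\ what keeps \eqref{eq:gamma-def} on the explosive side of Corollary~\ref{cor:global-holder} rather than contradicting it.
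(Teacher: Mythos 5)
Your proposal is correct, and it proves the statement by a genuinely different construction than the paper's, though with the same drift $b(x)=\|x\|^{1+\alpha}Jx$ and the same underlying mechanism (the drift is purely rotational, so the radial blow-up must be injected by the driver, whose roughness can be kept at order $(1-\epsilon)\alpha$ because the angular speed $\|x\|^{1+\alpha}$ diverges much faster than the radial forcing). The paper does not prescribe the solution in closed form: it first solves an autonomous, driver-free ODE for an auxiliary curve $z$ whose radius satisfies $\partial_t\|z_t\|^2\ge 2\|z_t\|^{2+\alpha-\mu}$ and hence blows up at a finite time $\xi$, then takes the driver to be the orthogonal perturbation $\gamma_t=-\|z_t\|^{-(1+\mu)}Jz_t$ (so $x=z+\gamma$ solves the equation and $\gamma\to0$ at $\xi$), and finally derives the Hölder exponent $\mu/(1+\alpha)$ from the two-sided bound $\|\gamma_t'\|\asymp\|z_t\|^{1+\alpha-\mu}$ combined with the blow-up rate $\|z_{\xi-\delta}\|\lesssim\delta^{-1/(1+\alpha)}$ and an l'H\^opital-type argument at $\xi$. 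You instead write the exploding trajectory explicitly in polar form, $x_t=(T-t)^{-p}(\cos\Theta_t,\sin\Theta_t)$ with $\Theta'=r^{1+\alpha}$, read off $\gamma_t=\int_0^t\dot r(u)\hat r_u\,\dd u$, and obtain its H\"older regularity by a stationary-phase integration by parts, $\dot r\,\hat r=hV'$ with $h=\dot r/\Theta'=p(T-u)^{p\alpha-1}$ Lipschitz and vanishing at $T$, the case analysis collapsing to the single exponent inequality $(1-\epsilon)\alpha\le\frac{p\alpha-1}{p(1+\alpha)}$, which your choice $\alpha=\frac{\epsilon}{2(1-\epsilon)}$, $p=\frac{2}{\alpha\epsilon}$ achieves; I checked the two cases ($t-s$ small or large relative to $(T-t)^{p(1+\alpha)}$) and the bookkeeping does close. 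What each route buys: yours yields a completely explicit driver with transparent parameter choices and avoids analyzing an implicitly defined blow-up ODE; the paper's yields a driver that vanishes at the explosion time (so the identification of the explosion time of $x$ with that of $z$ is immediate) and replaces the oscillatory-integral estimate by elementary ODE asymptotics. Two cosmetic remarks: the statement's reference to Assumption~\ref{assume2} is evidently a slip for Assumption~\ref{cond:perp} (as in the introduction's example), and that is exactly what you verify, with $f(s)=s$ and $\beta=1+\alpha$; and since your $\gamma$ differs from the one the paper fixes in its own construction, your argument establishes the existence claim with the same $b$ but a different explicit $\gamma$, which is all the proposition's substance requires.
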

\begin{proof}
  Setting $\alpha \in (0, 1], \mu \in (\frac{\alpha}{2}, \alpha)$, by taking $\alpha$ sufficiently small,
  it suffices to construct an ODE of the form
  \begin{equation}\label{eq:ode-y}
    x'_t = \|x_t\|^{1 + \alpha}J x_t + \gamma_t'
  \end{equation}
  where $J$ is the counter-clockwise rotation matrix by $\pi / 2$, and $\gamma : \R_+ \to \R^2$ is
  a $\mu / (1 + \alpha)$-Hölder continuous path for which the solution to the ODE~\eqref{eq:ode-y}
  has finite time explosion.

  Let $z : [0, \xi) \to \R^2$ (with $\xi \in (0, \infty]$) be the maximal solution (whose existence
    is guaranteed by Picard-Lindel\"of) to the ODE
    \begin{equation}
      z'_t = \left\|z_t - \|z_t\|^{-(1 + \mu)}Jz_t\right\|^{1 + \alpha} J
      \left(z_t - \|z_t\|^{-(1 + \mu)}Jz_t\right)
    \end{equation}
    with some non-zero initial condition $z_0$. We observe that
    \begin{align*}
      \partial_t \|z_t\|^2 & = 2 \<z_t, z_t'\>
      = 2\left\|z_t - \|z_t\|^{-(1 + \mu)}Jz_t\right\|^{1 + \alpha}\<z_t, J \left(- \|z_t\|^{-(1 + \mu)}Jz_t\right)\> \\
                           & = 2\left\|z_t - \|z_t\|^{-(1 + \mu)}Jz_t\right\|^{1 + \alpha} \|z_t\|^{2 - (1 + \mu)}
      \ge 2 \|z_t\|^{2 + \alpha - \mu}.
    \end{align*}
    Thus, $\|z_t\|$ is increasing and moreover, has a finite blow-up time $\xi < \infty$.
    Consequently, setting
    \begin{equation}\label{eq:gamma-def}
      \gamma_t =
      \begin{cases}
        - \|z_t\|^{-(1 + \mu)}Jz_t, & \text{for } t < \xi, \\
        0,                          & \text{otherwise},
      \end{cases}
    \end{equation}
  $\gamma$ is well-defined and by taking $x_t = z_t + \gamma_t$, we have that $x$ solves the ODE~\eqref{eq:ode-y} up to
    time $\xi$. Moreover, as $\lim_{t \uparrow \xi} \|\gamma_t\| = 0$, it follows that $x$ has the same blow-up time
    as $z$, i.e. $\lim_{t \uparrow \xi} \|x_t\| = \lim_{t \uparrow \xi} \|z_t\| = \infty$.
    Thus, it remains to check that $\gamma_t$ is $\mu / (1 + \alpha)$-H\"older continuous.

    To this end, we apply the quotient rule to $\gamma_t$ for any $t < \xi$ to obtain
    \begin{align*}
      \gamma_t' & = \frac{\|z_t\|^{1 + \mu}(-Jz_t)' - (-Jz_t)(\|z_t\|^{1 + \mu})'}{\|z_t\|^{2(1 + \mu)}}
      = \frac{- Jz_t'}{\|z_t\|^{1 + \mu}} +
      (1 + \mu)\frac{\<z_t, z_t'\> Jz_t}{\|z_t\|^{3 + \mu}}.
    \end{align*}
    Noticing
    \begin{align*}
      \<z_t, z_t'\>
       & = \left\|z_t + \gamma_t\right\|^{1 + \alpha} \frac{\|z_t\|^2}{\|z_t\|^{1 + \mu}}
      \le 2^{\frac{1 + \alpha}{2}}\left(\|z_t\|^{2 + \alpha - \mu} + \|z_t\|^{1 - (2 + \alpha)\mu}\right),
    \end{align*}
    and
    \begin{align*}
      \|Jz_t'\| & = \left\|z_t + \gamma_t\right\|^{2 + \alpha} \le 2^{1 + \frac{\alpha}{2}} (\|z_t\|^{2 + \alpha} + \|z_t\|^{-\mu(2 + \alpha)}),
    \end{align*}
    we have,
    \begin{align*}
      \|\gamma_t'\| \le \
       & + 2^{1 + \frac{\alpha}{2}}\|z_t\|^{1 + \alpha - \mu}
      + 2^{\frac{1 + \alpha}{2}}(\sqrt{2} + 1 + \mu)\|z_t\|^{-1 - (3 + \alpha)\mu}
      + (1 + \mu)2^{\frac{1 + \alpha}{2}}\|z_t\|^{- (2 \mu - \alpha)}.
    \end{align*}
    Since $\|z_t\| \to \infty$ as $t \to \xi$, we may take $t_1 \in [0, \xi)$ such that,
    for all $t \in [t_1, \xi)$, $\|z_t\| \ge 1$. Then, we have $\|\gamma_t'\| \le C\|z_t\|^{1 + \alpha - \mu}$
    for some constant $C > 0$ and any $t \in [t_1, \xi)$. On the other hand, by orthogonality, we have
  $\|\gamma_t'\| \ge \frac{\|Jz'_t\|}{\|z_t\|^{1 + \mu}} \ge \|z_t\|^{1 + \alpha - \mu}$, and
    so $\|\gamma_t'\| \asymp \|z_t\|^{1 + \alpha - \mu}$ for all $t \in [t_1, \xi)$. Thus, using the fact
    that $\gamma_\xi = 0$, there exists some $\delta' \in (0, \delta)$ such that
    $$\delta \|z_{\xi - \delta}\|^{1 + \alpha - \mu}\le \delta \|z_{\xi - \delta'}\|^{1 + \alpha - \mu} \asymp \delta \|\gamma_{\xi - \delta'}'\|
      = \|\gamma_{\xi - \delta} - \gamma_{\xi}\| = \|z_{\xi - \delta}\|^{-\mu}.$$
    Consequently, $\|z_{\xi - \delta}\| \lesssim \delta^{-\frac{1}{1 + \alpha}}$
    for which we obtain $\|\gamma_t'\| \lesssim \delta^{-\left(1 - \frac{\mu}{1 + \alpha}\right)}$ for all $t \in [t_0, \xi)$.
    Hence, by l'H\^opital's rule for limit supremum, we obtain
    \begin{align*}
      \limsup_{\delta \downarrow 0} \delta^{-\frac{\mu}{1 + \alpha}}\|\gamma_{\xi - \delta} - \gamma_{\xi}\|
       & \le \limsup_{\delta \downarrow 0} \left(\frac{\mu}{1 + \alpha} \delta^{1 - \frac{\mu}{1 + \alpha}}\right)
      (\|\gamma_{\xi - \delta}\|)'                                                                                                           \\
       & = \limsup_{\delta \downarrow 0} \left(\frac{\mu}{1 + \alpha} \delta^{1 - \frac{\mu}{1 + \alpha}}\right)
      \<\frac{\gamma_{\xi - \delta}}{\|\gamma_{\xi - \delta}\|}, \gamma_{\xi - \delta}'\>                                                    \\
       & \le \limsup_{\delta \downarrow 0} \left(\frac{\mu}{1 + \alpha} \delta^{1 - \frac{\mu}{1 + \alpha}}\right)\|\gamma_{\xi - \delta}'\|
      < \infty
    \end{align*}
    from which we may conclude that $\gamma$ is $\mu / (1 + \alpha)$-Hölder continuous.
\end{proof}

\section{Non-explosion of RDEs}\label{sec:RDE}
The remainder of this article is dedicated to providing a criterion for non-explosion of solutions 
to RDEs with unbounded coefficients with unbounded derivatives. To this end, we aim to apply 
Lemma~\ref{lem:fast-growth} by viewing the rough integral in the RDE as the driving noise to an ODE. 
Namely, we try to show that the rough integral satisfy the interactive regularity~\eqref{eq:local-holder}. 
As an example, we first illustrate this idea by considering the Young case.

Throughout this section, we let $x_0 \in H$, $b : H \to H$, and $\sigma : H \to \CL(V; H)$ with $b, \sigma$
bounded on bounded sets\footnote{This condition is needed to ensure the existence of maximal solutions
  (cf. \cite[Exercise 8.4]{Friz:20}). This condition is always satisfied in the finite dimensional setting 
  assuming continuity of the coefficients.}.

\subsection{Non-explosion of YDEs}\label{sec:Young}


We consider the YDE
\begin{equation}\label{eq:young-de'}
  x_t = x_0 + \int_0^t b(x_s) \dd s + \int_0^t \sigma(x_s) \dd \gamma_s,
\end{equation}
for which $\gamma : \R_+ \to V$ and the coefficients are sufficiently regular (to be specified) so that the second integral
can be interpreted in terms of Young integration. In this case, the maximal solution is defined as
in Definition~\ref{def:sol} with $\gamma$ replaced by
$\eta_t = \int_0^t \sigma(x_s) \dd \gamma_s$.

We first provide a useful a priori estimate on the H\"older norm of $\eta$.  Below,
$\|\cdot\|_{\alpha; [s, t]}$ denotes the $\alpha$-H\"older norm on the interval $[s, t]$,
$\|\cdot\|_\alpha$ denotes the $\alpha$-H\"older norm on $\R$ or on an implicit interval.

\begin{lemma} \label{lem:young-est}
  Assume that $\gamma \in C_{\loc}^\alpha$ for some $\alpha > \frac{1}{2}$, $\sigma \in C^1$
  and $(x_t) : [0, \xi) \to H$ is a maximal solution to~\eqref{eq:young-de'}.
  Then, for $0 \le s < t < \xi$ with $|t - s|^\alpha \le (2 C_\alpha \|\gamma\|_\alpha\|D\sigma(x_\cdot)\|_{\infty; [s, t]})^{-1}$,
  we have that
  \begin{align*}
    \|\eta\|_{\alpha; [s, t]} \le 2 & \|\sigma(x_\cdot)\|_{\infty; [s, t]} \|\gamma\|_\alpha                                 \\
                                    & + 2 C_{\alpha}\|\gamma\|_\alpha |t - s|^{\alpha} \|D\sigma(x_\cdot)\|_{\infty; [s, t]}
    \left\|\int_0^\cdot b(x_r) \dd r\right\|_{\alpha; [s, t]}
  \end{align*}
  where $C_\alpha = 2^{2\alpha} \zeta(2\alpha)$ with $\zeta$ being the zeta function\footnote{This constant is
    precisely the constant coming from the sewing lemma.}.
\end{lemma}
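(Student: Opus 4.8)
The plan is to run the sewing lemma with the canonical germ $\Xi_{u,v}:=\sigma(x_u)\gamma_{u,v}$, the first-order approximation to the increment of $\eta$, and then to close a self-referential (Gr\"onwall-type) inequality for the seminorm $\|\eta\|_{\alpha;[s,t]}$. Before any estimate I would record the finiteness facts that make the later rearrangement legitimate: since $(x_r)_{r\in[0,\xi)}$ solves \eqref{eq:young-de'} it is continuous, so $x([s,t])$ is compact whenever $s<t<\xi$; as $\sigma\in C^1$, both $\sigma(x_\cdot)$ and $D\sigma(x_\cdot)$ are bounded on $[s,t]$ and $\sigma(x_\cdot)$ is $\alpha$-H\"older there, so that (using $\gamma\in C^\alpha_{loc}$ with $\alpha>\tfrac12$) the Young integral $\eta_v=\int_0^v\sigma(x_r)\,\dd\gamma_r$ is well defined with $\|\eta\|_{\alpha;[s,t]}<\infty$, while $v\mapsto\int_0^v b(x_r)\,\dd r$ is Lipschitz on $[s,t]$, hence $\alpha$-H\"older.

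Next, for $u<v$ in $[s,t]$, since the Young integral $\eta_{u,v}$ is the sewing of $\Xi$, the sewing lemma gives
\[
  \|\eta_{u,v}-\sigma(x_u)\gamma_{u,v}\|\le C_\alpha\,|v-u|^{2\alpha}\sup_{u\le p<q<r\le v}\frac{\|\delta\Xi_{p,q,r}\|}{|r-p|^{2\alpha}},\qquad C_\alpha=2^{2\alpha}\zeta(2\alpha).
\]
A direct computation using $\delta A_{p,q,r}=A_{p,r}-A_{p,q}-A_{q,r}$ gives $\delta\Xi_{p,q,r}=(\sigma(x_p)-\sigma(x_q))\gamma_{q,r}$; the mean value inequality bounds $\|\sigma(x_p)-\sigma(x_q)\|\le\|D\sigma(x_\cdot)\|_{\infty;[s,t]}\|x_{p,q}\|$, and combining this with $\|x_{p,q}\|\le\|x\|_{\alpha;[s,t]}|q-p|^\alpha$, $\|\gamma_{q,r}\|\le\|\gamma\|_\alpha|r-q|^\alpha$ and $|q-p|^\alpha|r-q|^\alpha\le|r-p|^{2\alpha}$ yields $\|\delta\Xi_{p,q,r}\|\le\|D\sigma(x_\cdot)\|_{\infty;[s,t]}\|\gamma\|_\alpha\|x\|_{\alpha;[s,t]}|r-p|^{2\alpha}$ (seminorms over $[p,r]\subseteq[s,t]$ being dominated by those over $[s,t]$). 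Feeding this into the sewing bound, using also $\|\sigma(x_u)\gamma_{u,v}\|\le\|\sigma(x_\cdot)\|_{\infty;[s,t]}\|\gamma\|_\alpha|v-u|^\alpha$, dividing by $|v-u|^\alpha$, bounding the residual $|v-u|^\alpha\le|t-s|^\alpha$, and taking the supremum over $u<v$ in $[s,t]$, I arrive at
\[
  \|\eta\|_{\alpha;[s,t]}\le\|\sigma(x_\cdot)\|_{\infty;[s,t]}\|\gamma\|_\alpha+\theta\,\|x\|_{\alpha;[s,t]},\qquad \theta:=C_\alpha\|D\sigma(x_\cdot)\|_{\infty;[s,t]}\|\gamma\|_\alpha|t-s|^\alpha.
\]

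Finally, inserting the solution identity $x_{u,v}=\eta_{u,v}+\int_u^v b(x_r)\,\dd r$ gives $\|x\|_{\alpha;[s,t]}\le\|\eta\|_{\alpha;[s,t]}+\|\int_0^\cdot b(x_r)\,\dd r\|_{\alpha;[s,t]}$, so the previous display becomes $(1-\theta)\|\eta\|_{\alpha;[s,t]}\le\|\sigma(x_\cdot)\|_{\infty;[s,t]}\|\gamma\|_\alpha+\theta\,\|\int_0^\cdot b(x_r)\,\dd r\|_{\alpha;[s,t]}$. The standing hypothesis $|t-s|^\alpha\le(2C_\alpha\|\gamma\|_\alpha\|D\sigma(x_\cdot)\|_{\infty;[s,t]})^{-1}$ forces $\theta\le\tfrac12$, whence $\tfrac1{1-\theta}\le2$ and $\tfrac{\theta}{1-\theta}\le2\theta$; dividing through by $1-\theta$ produces exactly the claimed inequality. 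I expect the only genuinely delicate point to be the a priori finiteness of $\|\eta\|_{\alpha;[s,t]}$ invoked in the first step — without it, moving the term $\theta\|\eta\|_{\alpha;[s,t]}$ to the left-hand side is meaningless — which is precisely why the statement restricts to $t<\xi$ and leans on continuity of the maximal solution together with $\sigma\in C^1$. The rest is routine sewing bookkeeping, apart from the minor care, in the last step, of keeping the factor $\theta$ via $\tfrac{\theta}{1-\theta}\le2\theta$ rather than crudely absorbing it, which is what produces the $|t-s|^\alpha$ in front of the drift term in the stated estimate.
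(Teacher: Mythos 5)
Your proposal is correct and follows essentially the same route as the paper: the Young/sewing estimate with germ $\sigma(x_u)\gamma_{u,v}$ bounding $\|\eta\|_{\alpha;[s,t]}$ by the $\sigma$-term plus $\theta\|x\|_{\alpha;[s,t]}$, then substituting $\|x\|_{\alpha}\le\|\eta\|_{\alpha}+\|\int_0^\cdot b(x_r)\,\dd r\|_{\alpha}$ and absorbing $\theta\|\eta\|_{\alpha}$ on the left using $\theta\le\tfrac12$. Your remark on the a priori finiteness of $\|\eta\|_{\alpha;[s,t]}$ for $t<\xi$ is a point the paper leaves implicit, but the argument is the same.
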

\begin{proof}
  For simplicity, we drop the interval $[s, t]$ from the notation of the H\"older norm.
  We have by the Young inequality that for all $u < v \in (0, \xi)$, we have
  \begin{align*}
    \|\eta\|_{\alpha}
     & \le \|\sigma(x_\cdot)\|_{\infty} \|\gamma\|_\alpha
    + C_{\alpha}\|\gamma\|_\alpha |t - s|^{\alpha} \|D\sigma(x_\cdot)\|_{\infty; [s, t]} \|x\|_{\alpha}.
  \end{align*}
  Moreover, since
  $\|x\|_\alpha \le \|\eta\|_\alpha + \left\|\int_0^\cdot b(x_r) \dd r\right\|_\alpha$, it follows
  \begin{align*}
    \frac{1}{2} \|\eta\|_{\alpha}
    \le & \left(1 - C_{\alpha}\|\gamma\|_\alpha |t - s|^{\alpha} \|D\sigma(x_\cdot)\|_{\infty; [s, t]}\right)\|\eta\|_{\alpha} \\
    \le & \|\sigma(x_\cdot)\|_{\infty; [s, t]} \|\gamma\|_\alpha
    + C_{\alpha}\|\gamma\|_\alpha |t - s|^{\alpha} \|D\sigma(x_\cdot)\|_{\infty; [s, t]}
    \left\|\int_0^\cdot b(x_r) \dd r\right\|_{\alpha},
  \end{align*}
  for which we obtain the claimed estimate.
\end{proof}

\begin{theorem}\label{thm:young}
  Let $b\in \Lip_{\loc}$, $\sigma \in C^{\frac{1}{\alpha} - 1}$ and $\gamma \in C^\alpha_{\loc}$ for some $\alpha > \frac{1}{2}$.
  Assume that $b$ satisfy the linear growth Assumption~\ref{as:lin-growth} with control $f$, and there exists
  some $\theta, \kappa \in [0, 1)$ such that for all $x \in H$,
  \begin{longlist}
    \item[(Y.1)]\label{cond:young-2} $\|b(x)\| \le f(\|x\|)^{1 + \kappa \alpha},$
    \item[(Y.2)] $\|D^n \sigma(x)\| \le f(\|x\|)^{(\theta - n\kappa) \alpha}$ for $n = 0, 1$.
  \end{longlist}
  Then, for any initial condition, the YDE~\eqref{eq:young-de'} has a global solution, 
  which is furthermore unique if $\sigma \in C^{\frac{1}{\alpha}}$.
\end{theorem}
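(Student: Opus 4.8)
The plan is to reduce the Young DE to an ODE with driving curve $\eta_t = \int_0^t \sigma(x_s)\,\dd\gamma_s$ and then verify that $\eta$ satisfies the interactive regularity condition~\eqref{eq:local-holder}, so that Lemma~\ref{lem:fast-growth} applies. Existence of a maximal solution $(x_t)_{t\in[0,\xi)}$ is guaranteed by the footnote hypothesis ($b,\sigma$ bounded on bounded sets), so we only need to rule out $\xi<\infty$. Suppose for contradiction that $\xi \le T$ for some $T>0$. Fix a large radius parameter $R$ (to be chosen above $R_0 \vee \|x_0\|$, with $R_0$ from~\eqref{eq:R0-def}), set $\tau^r = \inf\{t:\|x_t\|\ge r\}\wedge T$ and $\sigma^k = \tau^{R(k+1)} - \tau^{Rk}$ as in the interactive relation. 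On the interval $[\tau^{Rk}, \tau^{Rk}+s]$ with $s \le \sigma^k$, the solution stays in the ball of radius $R(k+1)$, so by the growth hypotheses \ref{cond:young-2} and (Y.2) we have, on that interval,
\begin{equs}
  \|\sigma(x_\cdot)\|_{\infty} &\le f(R(k+1))^{\theta\alpha}, \qquad \|D\sigma(x_\cdot)\|_{\infty} \le f(R(k+1))^{(\theta-\kappa)\alpha}, \\
  \|b(x_\cdot)\|_{\infty} &\le f(R(k+1))^{1+\kappa\alpha}.
\end{equs}

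\textbf{Estimating the driving curve.} Next I would apply Lemma~\ref{lem:young-est} on $[\tau^{Rk},\tau^{Rk}+s]$: provided $s^\alpha \le (2C_\alpha\|\gamma\|_{\alpha;[0,T]}f(R(k+1))^{(\theta-\kappa)\alpha})^{-1}$, the lemma gives
$$\|\eta\|_{\alpha;[\tau^{Rk},\tau^{Rk}+s]} \le 2 f(R(k+1))^{\theta\alpha}\|\gamma\|_\alpha + 2C_\alpha\|\gamma\|_\alpha s^\alpha f(R(k+1))^{(\theta-\kappa)\alpha}\Big\|\textstyle\int_0^\cdot b(x_r)\dd r\Big\|_{\alpha;[\tau^{Rk},\tau^{Rk}+s]},$$
and the last Hölder norm of the $b$-integral is bounded by $s^{1-\alpha}\|b(x_\cdot)\|_\infty \le s^{1-\alpha} f(R(k+1))^{1+\kappa\alpha}$. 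Combining, on a time scale $\delta_k$ chosen so that $\delta_k^\alpha f(R(k+1))^{(\theta-\kappa)\alpha}$ is bounded (say $\le 1$), the increment of $\eta$ over a window of length $\delta_k$ is controlled by
$$\sup_{0\le t\le \delta_k\wedge\sigma^k}\|\eta_{\tau^{Rk}+t}-\eta_{\tau^{Rk}}\| \lesssim \delta_k^\alpha\|\eta\|_{\alpha;[\tau^{Rk},\tau^{Rk}+\delta_k]} \lesssim \delta_k^\alpha f(R(k+1))^{\theta\alpha} + \delta_k f(R(k+1))^{1+\kappa\alpha}.$$
To match~\eqref{eq:local-holder}, namely $\le K\delta_k^{\beta-1}$ with $\beta \in (1,2)$, the idea is to pick $\beta = 1+\alpha$ (using $\theta<1$ to absorb the first term) and choose $\delta_k = c\, f(R(k+1))^{-1}$ for a small constant $c$: then the second term is $\lesssim \delta_k^{1+\kappa\alpha} \cdot (\text{const}) \le \delta_k^{\beta-1}$ for $k$ large (since $1+\kappa\alpha > \alpha = \beta-1$ and $\delta_k\le 1$), and the first term is $\delta_k^\alpha f(R(k+1))^{\theta\alpha} = \delta_k^\alpha \cdot (c^{-1}\delta_k)^{-\theta\alpha}\cdot c^{-\theta\alpha}\cdots$, which one checks is $\lesssim \delta_k^{(1-\theta)\alpha}\le \delta_k^{\beta-1}$ when $(1-\theta)\alpha \ge \beta-1$; this forces the precise relation between $\theta$, $\kappa$, $\alpha$ and $\beta$, and I would tune $\beta$ slightly below $1+\alpha$ if necessary to make every exponent inequality strict. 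With $f$ a control function, $\sum_k f(R(k+1))^{-1}$ is comparable to $\int^\infty f(Rs)^{-1}\dd s = \infty$, so $\sum_{k\ge k_0}\delta_k = \infty > T$ for any $k_0$.

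\textbf{Conclusion and the main obstacle.} One also has to check the side constraints in Lemma~\ref{lem:fast-growth}(ii): $\delta_k \le \min\{1, f(R(k+1))^{-1}, (a/K)^{1/(\beta-1)}\}$, which holds for our choice of $\delta_k$ and large $k$, and also that $\delta_k^\alpha \le (2C_\alpha\|\gamma\|_{\alpha;[0,T]} f(R(k+1))^{(\theta-\kappa)\alpha})^{-1}$ so that Lemma~\ref{lem:young-est} is legitimately applicable — again true for large $k$ after possibly shrinking $c$, using $\theta-\kappa < 1$. Applying the second assertion of Lemma~\ref{lem:fast-growth} with $K$ and $R$ fixed uniformly over $k\ge k_0$ then yields $\xi > T$, contradicting $\xi\le T$; since $T$ was arbitrary, $\xi=\infty$ and the solution is global. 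Uniqueness under $\sigma\in C^{(\frac{\alpha}{1-\alpha})+}$ follows from the standard local uniqueness theory for Young equations (e.g.\ \cite{Friz:20}) combined with the global existence just established, by a patching argument over $[0,\infty)$. The main obstacle I anticipate is bookkeeping the exponents: ensuring that the choice $\delta_k \asymp f(R(k+1))^{-1}$ simultaneously (a) makes the $\eta$-increment $\le K\delta_k^{\beta-1}$, (b) stays below $f(R(k+1))^{-1}$ as required by the hypothesis of Lemma~\ref{lem:fast-growth}, and (c) keeps the window short enough for Lemma~\ref{lem:young-est} to apply — all three involve competing powers of $f$, and the hypotheses $\theta,\kappa\in[0,1)$ are exactly what is needed to win each inequality, so the proof must be written to display where each of those constraints is used.
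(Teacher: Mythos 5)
Your overall route is the paper's: treat $\eta_t=\int_0^t\sigma(x_s)\,\dd\gamma_s$ as the driving curve, use Lemma~\ref{lem:young-est} on the windows $[\tau^{Rk},\,(\tau^{Rk}+f(R(k+1))^{-1})\wedge\tau^{R(k+1)}]$ to verify the interactive relation~\eqref{eq:local-holder}, and conclude with Lemma~\ref{lem:fast-growth}; existence of a maximal solution and uniqueness under $\sigma\in C^{(\frac{\alpha}{1-\alpha})+}$ are handled the same way. The gap is in the exponent bookkeeping, exactly at the point you flagged as the main obstacle. First, your bound on the drift contribution is false as written: with $\delta_k=c\,f(R(k+1))^{-1}$ one has $\delta_k\,f(R(k+1))^{1+\kappa\alpha}=c\,f(R(k+1))^{\kappa\alpha}$, which diverges in $k$ whenever $\kappa>0$; it is not $\lesssim\delta_k^{1+\kappa\alpha}$. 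The correct bookkeeping keeps the window length inside the product: the drift part of $\delta_k^\alpha\|\eta\|_{\alpha;I}$ is $\lesssim \delta_k^{\alpha}\cdot\delta_k^{\alpha}f^{(\theta-\kappa)\alpha}\cdot\delta_k^{1-\alpha}f^{1+\kappa\alpha}=\delta_k^{1+\alpha}f^{1+\theta\alpha}\le\delta_k^{\alpha}f^{\theta\alpha}$ (using $\delta_k\le f^{-1}$), i.e.\ of the same order as the $\sigma$-term, and then $f^{\theta\alpha}\delta_k^{\alpha}\le\delta_k^{(1-\theta)\alpha}$. Second, and consequently, the increment of $\eta$ over a window of length $t\le f(R(k+1))^{-1}$ can only be bounded uniformly in $k$ by $K\,t^{(1-\theta)\alpha}$ with $K\asymp\|\gamma\|_{\alpha;[0,T]}$, so the interactive relation can only be verified with $\beta-1\le(1-\theta)\alpha$. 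Your plan to take $\beta$ ``slightly below $1+\alpha$'' therefore fails for any $\theta>0$: with such a $\beta$ the constant $K$ in~\eqref{eq:local-holder} would have to grow like $f(R(k+1))^{\theta\alpha}$, destroying the uniformity in $k$ that Lemma~\ref{lem:fast-growth} requires (both through $R_0$ in~\eqref{eq:R0-def} and through the constraint $\delta_k\le(a/K)^{1/(\beta-1)}$).

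The fix is precisely the paper's choice: set $\beta=1+(1-\theta)\alpha\in(1,2)$ and $K=2(1+C_\alpha)\|\gamma\|_{\alpha;[0,T]}$, choose $R>\|x_0\|$ so that~\eqref{eq:R-bd} holds with this $K$, and note that for $k\ge k_0$ the smallness condition of Lemma~\ref{lem:young-est} holds on the window because $\|D\sigma\|_{\infty;B_{R(k+1)}}\lesssim f(R(k+1))^{(\theta-\kappa)\alpha}$ with $\theta-\kappa<1$. With this single correction (which also keeps Assumption~\ref{cond:perp} and the divergence $\sum_k\delta_k=\infty$ exactly as you use them), your argument coincides with the paper's proof of Theorem~\ref{thm:young}.
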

\begin{proof}
  YDEs with the assumed regularity are known to be locally well-posed 
  (cf. \cite[Theorem 2.1]{Caravenna:25}) and thus, it remains to show that $\xi > T$ for any $T > 0$.

  We denote $\|\gamma\|_\alpha$ for its $\alpha$-H\"older norm on $[0, T]$,
  $\tau^r = \inf\{t : \|x_t\| \ge r\}$ and for all $t \in (0, \xi)$,
  $$\eta_t = \int_0^t \sigma(x_s) \dd \gamma_s.$$
  Since $b$ satisfies Assumption~\ref{cond:perp} for any $\beta > 1$, by choosing $\beta = 1 + (1 - \theta)\alpha$,
  we aim to apply Lemma~\ref{lem:fast-growth} where we regard $(x_t)$ as satisfying the integral
  equation~\eqref{eq:ode-intro} driven by $(\eta_t)$. For this, we seek a pair of numbers $R$
  and $K$ satisfying Equation~\eqref{eq:local-holder} and \eqref{eq:R-bd} uniformly in $k \in \N$.

  Beginning with any $\bar R > 1$, and $k \in \N$, since
  $f(r) \to \infty$ as $r \to \infty$ and $\|D\sigma\|_{\infty; B_{\bar R(k + 1)}}
    \lesssim f(\bar R (k + 1))^{(\theta - \kappa)\alpha}$, there exists some $k_0 \in \N$ such that for all $k \ge k_0$,
  $$f(\bar R (k + 1))^{-\alpha} \le (2 C_\alpha \|\gamma\|_\alpha \|D\sigma\|_{\infty; B_{\bar R(k + 1)}})^{-1}.$$
  Thus, by applying Lemma~\ref{lem:young-est} to the interval
  $$I = I(k, \bar R) = [\tau^{\bar Rk}, \min\{\tau^{\bar Rk} + f(\bar R (k + 1))^{-1}, \tau^{\bar R(k + 1)}\}],$$
  we have for all $k \ge k_0$ that
  \begin{align*}
    \|\eta\|_{\alpha; I}
    \le & \ 2\|\sigma\|_{\infty; B_{\bar R(k + 1)}} \|\gamma\|_\alpha                                                                      \\
        & + 2 C_{\alpha}\|\gamma\|_\alpha  \|D\sigma\|_{\infty; B_{\bar R(k + 1)}} f(\bar R (k + 1))^{-1}\|b\|_{\infty; B_{\bar R(k + 1)}} \\
    \le & \ 2 f(\bar R (k + 1))^{\theta \alpha}\|\gamma\|_\alpha                                                                           \\
        & + 2 C_{\alpha}\|\gamma\|_\alpha f(\bar R (k + 1))^{(\theta - \kappa)\alpha}
    f(\bar R (k + 1))^{-1} f(\bar R(k + 1))^{1 + \kappa\alpha}                                                                             \\
    \le & (2\|\gamma\|_\alpha + 2 C_{\alpha}\|\gamma\|_\alpha)f(\bar R(k + 1))^{\theta \alpha}.
  \end{align*}
  We have that,
  for all $t \le f(\bar R (k + 1))^{-1} \wedge (\tau^{\bar R(k + 1)} - \tau^{\bar Rk})$,
  \begin{align*}
    \sup_{0 \le s \le t}\|\eta_{\tau^{\bar R k} + s} - \eta_{\tau^{\bar R k}}\|
     & \le(2\|\gamma\|_\alpha + 2 C_{\alpha}\|\gamma\|_\alpha)f(\bar R(k + 1))^{\theta \alpha} t^{\alpha} \\
     & \le 2(1+C_\alpha) \|\gamma\|_\alpha \; t^{(1 - \theta) \alpha}.
  \end{align*}
  Consequently, choosing $R > \|x_0\|$ such that Equation~\eqref{eq:R-bd} holds with  $K=2(1+C_\alpha) \|\gamma\|_\alpha$.
  In particular, \eqref{cond:gamma-reg} holds uniformly, i.e.
  $$\sup_{0 \le s \le t}\|\eta_{\tau^{R k} + s} - \eta_{\tau^{ R k}}\|\le K \delta_k^{(1 - \theta)\alpha},$$
  for all $k \ge k_0$ and $t \le \delta_k = \min\{1, f(R(k + 1))^{-1}, (a / K)^{\frac{1}{\beta - 1}}\}$.
  Thus, $\xi > T$ follows from Lemma~\ref{lem:fast-growth}, as required.
\end{proof}

\subsection{Preliminaries on rough path theory}\label{sec:rp-pre}

We recall some notions from controlled rough path theory used in the remainder of this article.
\begin{definition}[Rough paths]
  For $\alpha \in (\frac{1}{3}, \frac{1}{2}]$, an $\alpha$-rough path taking values in $V$ is
  a pair $\X = (X, \XX)$ with $X \in C^{\alpha}([0, T]; V)$ and $\XX \in \C^{2\alpha}(\Delta_T; V \otimes V)$
  satisfying Chen's relation:
  \begin{equation}\label{eq:chen}
    \delta \XX_{s, u, t} = \XX_{s,t}-\XX_{s,u}-\XX_{u,t}=X_{s,u}\otimes X_{u,t},
  \end{equation}
  for any $s\leq u\leq t$.

  We denote by $\C^\alpha([0, T]; V)$ the space of all $\alpha$-rough paths and equip it with the norm
  $\|\X\|_{\alpha} = \|X\|_{\alpha} + \|\XX\|_{2 \alpha}$. Moreover, we write $\X \in \C^{\alpha+}([0, T]; V)$
  if there exists some $\beta > \alpha$ such that $\X \in \C^{\beta}([0, T]; V)$.
\end{definition}

\begin{definition}[Controlled rough paths]
  Let $X \in C^\alpha([0, T]; V)$, we say $Y \in C^\alpha([0, T]; W)$
  is controlled by $X$ if there exists a $Y' \in\C^\alpha([0, T]; \CL(V; W))$ such that defining
  \begin{equation}
    R_{s, t}^Y = Y_{s, t} - Y'_s X_{s, t}
  \end{equation}
  for any $(s, t) \in \Delta_T$, we have that $R \in \C^{2\alpha}(\Delta_T; W)$. We call $Y'$
  the Gubinelli derivative of $Y$ against $X$ albeit it might not be unique.

  The pair $(Y, Y')$ is known as a controlled rough path and we denote the space of all such controlled
  rough path by $\D_X^{2 \alpha}([0, T]; W)$.
\end{definition}

Now, for $\alpha \in (\frac{1}{3}, \frac{1}{2}]$, $\X = (X, \XX) \in \C^\alpha([0, T]; V)$,
$(Y, Y') \in \D^{2\alpha}_X([0, T]; W)$, the rough integral of $(Y, Y')$ against $\X$ is
defined by the Riemann sum approximation
$$\int_s^t Y_r \dd \X_r :=
  \sum_{(t_i, t_{i + 1}) \in \mathcal{P}} (Y_{t_i} X_{t_i, t_{i + 1}} + Y'_{t_i}\XX_{t_i, t_{i + 1}}) + O(|\mathcal{P}|),$$
where $\mathcal P$ is a partition of $[s, t]$ for any $s < t \in [0, T]$. Moreover, we have the estimate
\begin{equation}\label{eq:Remainder-ineq}
  \begin{split}
    & \left\|\int_s^t Y_r \dd \X_r - Y_s X_{s, t} - Y'_s \mathbb{X}_{s, t}\right\|\\
    & \le C_\alpha (\|X\|_\alpha \|R^Y\|_{2\alpha} +
    \|\mathbb{X}\|_{2\alpha} \|Y'\|_{\alpha})|t - s|^{3\alpha}.
  \end{split}
\end{equation}
Strictly speaking, this integral should be written
as $\int_0^t (Y, Y')_s \dd \X_s$ since $Y'$ contributes towards the limit. Nonetheless, the choice
of $Y'$ is usually clear from the context and we omit it from the notation.

\begin{definition}
  Let $\alpha \in (\frac{1}{3}, \frac{1}{2}]$, $\X \in \C^{\alpha}([0, T]; V)$,
  $b : W \to W$ and $\sigma \in C^2_b(W; \CL(V; W))$. A maximal solution to the RDE
  \begin{equation}\label{eq:rde}
    \dd x_t = b(x_t) \dd t + \sigma(x_t) \dd \X_t
  \end{equation}
  with initial condition $x_0 \in W$ and life time $\xi \in (0, T]$ is an element
  $(x, x') \in \D^{2\alpha}_X([0, \xi); W)$ with $x' = \sigma(x)$ such that if $\xi < \infty$, then
  $\limsup_{t \to \xi}\|x_t\| = \infty$. Moreover, for all $t \in [0, \xi)$,
  \begin{equation}
    x_t = x_0 + \int_0^t b(x_s) \dd s + \int_0^t \sigma(x_s) \dd \X_s.
  \end{equation}
  Here, the latter integral is understood as a rough integral with
  $\sigma(x_\cdot)$ having the Gubinelli derivative $D\sigma(x_\cdot) x_\cdot' = D\sigma(x_\cdot) \sigma(x_\cdot)$
  against $\X$ (cf. \cite[Section 7.3]{Friz:20}).

  In the case where we can take $\xi = \infty$, we say that the solution is global.
\end{definition}

\subsection{Uniform bound for the H\"older norm on the interval}

In contrast to the bound of the Young integral in which we utilized the linear nature of the Young
inequality, the remainder inequality for the rough integral will result in a quadratic term. While
such an inequality could provide a bound if we know a priori that the quantity is finite, a bound obtained
this way will not be uniform along the different level sets which is needed to show the interactive regularity
required by Lemma~\ref{lem:fast-growth}. This motivates the following lemma.

\begin{lemma}[Flying fish]\label{lem:poly-bound}
  Let $a, b, c\in C(\R_+; \R)$ be such that $\liminf_{x \to \infty} a(x) > 0$, $a(0) < 0$
  and $c$ is non-decreasing with $c(0) = 0$. Then, denoting
  $$x^* = \sup \{x \in \R_+ : a(x) = 0\}$$
  and
  \begin{equation}\label{eq:ineq-class}
    \mathcal{A} =
    \left\{f \in C([0, 1]; \R_+) \;\middle|\;
    \begin{aligned}
       & f \text{ is non-decreasing}                                                 \\
       & \forall \epsilon \in (0, 1),\ a(f(\epsilon)) \le c(\epsilon) b(f(\epsilon))
    \end{aligned}
    \right\},
  \end{equation}
  for any $r > 0$, there exists some $\epsilon^* > 0$ such that
  $\sup_{f \in \mathcal{A}} f(\epsilon^*) \le x^* + r$.
\end{lemma}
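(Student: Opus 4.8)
The plan is to reduce the statement to a contradiction argument based on the structure of the inequality $a(f(\epsilon)) \le c(\epsilon) b(f(\epsilon))$ together with the sign hypotheses on $a$. First I would note that since $c$ is continuous and non-decreasing with $c(0) = 0$, we have $c(\epsilon) \to 0$ as $\epsilon \downarrow 0$, so for any prescribed threshold $\eta > 0$ we can find $\epsilon^* > 0$ with $c(\epsilon^*) \le \eta$ (and hence $c(\epsilon) \le \eta$ for all $\epsilon \le \epsilon^*$ by monotonicity). The goal is to choose $\eta$ small enough, depending on $r$, so that for \emph{every} $f \in \mathcal{A}$ one is forced to have $f(\epsilon^*) \le x^* + r$.

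The key step is a separation estimate on $[x^* + r, \infty)$. Since $\liminf_{x \to \infty} a(x) > 0$ and $a$ is continuous, and since $x^*$ is the largest zero of $a$, the set $\{x \ge x^* + r\}$ is a closed set on which $a > 0$; combined with the $\liminf$ condition, one gets $m := \inf_{x \ge x^* + r} a(x) > 0$. The delicate point is that $b$ need not be bounded, so I cannot simply say $c(\epsilon^*) b(f(\epsilon^*)) < m$. Instead I would argue locally: suppose for contradiction that some $f \in \mathcal{A}$ satisfies $f(\epsilon^*) > x^* + r$. Because $f$ is non-decreasing and (being in $C([0,1];\R_+)$) continuous, by the intermediate value theorem there is some $\epsilon_0 \le \epsilon^*$ with $f(\epsilon_0) = x^* + r$, or more carefully, the set $\{\epsilon : f(\epsilon) \ge x^* + r\}$ is a sub-interval $[\epsilon_0, 1]$ containing $\epsilon^*$, with $f(\epsilon_0) = x^* + r$ by continuity (handling separately the trivial case $f(0) \ge x^*+r$, which is excluded once $\eta$ is small since $a(x^*+r) > 0 = c(0)b(\cdots)$ would already fail—wait, we need the inequality only for $\epsilon \in (0,1)$, so instead take $\epsilon_0 = \inf\{\epsilon : f(\epsilon) \ge x^*+r\}$ and use right-continuity). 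Evaluating the defining inequality at $\epsilon_0$ gives $a(x^* + r) = a(f(\epsilon_0)) \le c(\epsilon_0) b(f(\epsilon_0)) = c(\epsilon_0) b(x^* + r)$.

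Now the right-hand side involves $b$ evaluated only at the \emph{fixed} point $x^* + r$, so it is a genuine constant $b(x^*+r)$ — this is the trick that sidesteps the unboundedness of $b$. Setting $M = |b(x^* + r)| \vee 1$ and $a_0 = a(x^* + r) > 0$ (positive by the separation step, since $x^* + r > x^*$), we choose $\epsilon^*$ at the outset so that $c(\epsilon^*) < a_0 / M$; then $a(x^*+r) \le c(\epsilon_0) b(x^*+r) \le c(\epsilon^*) M < a_0 = a(x^*+r)$, a contradiction. Hence no $f \in \mathcal{A}$ can have $f(\epsilon^*) > x^* + r$, giving $\sup_{f \in \mathcal{A}} f(\epsilon^*) \le x^* + r$ as claimed. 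The main obstacle is the careful handling of the point $\epsilon_0$ where the threshold $x^* + r$ is first attained or crossed — one must use continuity (or at least monotone right-continuity) of $f$ to ensure the inequality can be evaluated \emph{exactly} at a point where $f$ equals $x^*+r$, rather than at a point where $f$ overshoots and $b$ could be large; once this is pinned down the rest is a one-line estimate. I would also make sure to note at the start that $x^*$ is finite and well-defined: $a(0) < 0$ ensures the zero set is non-empty, and $\liminf_{x\to\infty} a(x) > 0$ ensures it is bounded above, so the supremum defining $x^*$ is a genuine (attained, by continuity) maximum.
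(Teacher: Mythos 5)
Your core argument is sound and is essentially the paper's own: both choose $\epsilon^*$ uniformly using only the fixed quantities $a(x^*+r)$ and $b(x^*+r)$ (so that $a(x^*+r)-c(\epsilon)\,b(x^*+r)>0$ for all $\epsilon\le\epsilon^*$), and then derive the contradiction by evaluating the constraint at a point where $f$ equals \emph{exactly} $x^*+r$; this is what neutralizes the possible unboundedness of $b$, which is in any case only an issue of uniformity in $f$, since $b$ is continuous and hence bounded on the compact interval $[x^*+r,f(\epsilon^*)]$ for each fixed $f$. The difference is how the evaluation point is produced: the paper picks an $f$-dependent $\epsilon'<\epsilon^*$ so small that $a-c(\epsilon')b>0$ on all of $[x^*+r,f(\epsilon^*)]$, forces $f(\epsilon')<x^*+r$, and then applies the intermediate value theorem on $(\epsilon',\epsilon^*)$ to land exactly on the level $x^*+r$; you instead jump directly to the first-crossing time $\epsilon_0=\inf\{\epsilon: f(\epsilon)\ge x^*+r\}$.

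There is, however, a genuine gap in your boundary case. If $f(\epsilon)\ge x^*+r$ for all $\epsilon\in(0,1)$ (equivalently $f(0)\ge x^*+r$), then $\epsilon_0=0$: the defining inequality is not available at $\epsilon_0$, and $f(\epsilon_0)=f(0)$ need not equal $x^*+r$, so the identity $b(f(\epsilon_0))=b(x^*+r)$ on which your one-line estimate rests fails; your parenthetical fix (``take the inf and use right-continuity'') does not resolve this. The case is easy to close: for $\epsilon\in(0,\epsilon^*]$ one has $a(f(\epsilon))\le c(\epsilon)\,b(f(\epsilon))$, and letting $\epsilon\downarrow 0$, continuity of $a\circ f$ and $b\circ f$ together with $c(0)=0$ give $a(f(0))\le 0$, contradicting $a>0$ on $(x^*,\infty)$ (alternatively, bound $|b|$ on the compact interval $[x^*+r,f(\epsilon^*)]$, which is exactly the paper's $f$-dependent step). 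Two smaller points to make explicit: in the main case you need $c\ge 0$ (true, since $c$ is non-decreasing with $c(0)=0$) to pass from $c(\epsilon_0)b(x^*+r)$ to $c(\epsilon^*)\,(|b(x^*+r)|\vee 1)$; and the positivity $a>0$ on $(x^*,\infty)$ (not merely $a\neq 0$) should be justified via the intermediate value theorem and $\liminf_{x\to\infty}a(x)>0$. With the boundary case repaired, your proof is complete and slightly more streamlined than the paper's, as it needs the compact-interval bound on $b$ only in that degenerate situation.
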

Before presenting the proof, let us first provide some graphical intuition for why we should expect such 
a uniform estimate. 
Suppose that $f \in \mathcal{A}$ so that $a(f(\epsilon)) - \epsilon b(f(\epsilon)) \le 0$ for any $\epsilon \in (0, 1)$. 
Thus, fixing $\bar \epsilon > 0$, $f(\bar \epsilon)$ must take value at points for which the function 
$x \mapsto a(x) - {\bar \epsilon} b(x)$ is non-positive, i.e. it lives in one of the blue 
ponds in the first figure\footnote{Here $a(x) = x - 5$ and $b(x) = x^2 \sin(x)$} below. 
Now, shrinking $\bar \epsilon$, as $a(x) - {\bar \epsilon} b(x) \to a(x)$ for $\bar \epsilon \to 0$,  
we observe that the left edge of ponds in $[x^*, \infty)$ will move to the right. 
On the other hand, as $f$ is non-decreasing, it will move to the left.
Consequently, if $f(\bar \epsilon)$ lives in a pond in $[x^*, \infty)$, shrinking $\epsilon$ will 
force it to hit the left edge at which point it must jump to a previous pond. This contradicts the 
continuity of $f$.
\begin{center}
  \begin{tikzpicture}
    \begin{axis}[
        legend style={fill=white,draw=black},
        axis x line = middle,
        axis y line = left,
        xmin=0,
        xmax=22, 
        ymin=-30,
        ymax=50,
        xlabel = $x$,
        ylabel = {$a(x)- {\bar \epsilon} b(x)$},
    ]
    \addplot [
        domain=0:22, 
        samples=1000, 
        color=red,
    ]
    {x - 5 - (x^2 * sin(deg(x)) / 10)};
    \addlegendentry{${\bar\epsilon} = 0.1$}
    \addplot [
        domain=0:3.9204, 
        samples=2, 
        color=blue,
        style={ultra thick},
    ]
    {0};
    \addplot [
        domain=6.66771:8.9098, 
        samples=2, 
        color=blue,
        style={ultra thick},
    ]
    {0};
    \addplot [
        domain=13.05858:15.25156, 
        samples=2, 
        color=blue,
        style={ultra thick},
    ]
    {0};
    \addplot [
        domain=19.24436:21.62772, 
        samples=2, 
        color=blue,
        style={ultra thick},
    ]
    {0};
  \end{axis}
  \end{tikzpicture}
  \begin{tikzpicture}
    \begin{axis}[
        legend style={fill=white,draw=black},
        axis x line = middle,
        axis y line = left,
        xlabel = $x$,
        xmin=0,
        xmax=22, 
        ymin=-30,
        ymax=50,
    ]
    \addplot [
        domain=0:22, 
        samples=1000, 
        color=red,
    ]
    {x - 5 - 0.04 * (x^2 * sin(deg(x)))};
    \addlegendentry{${\bar\epsilon} = 0.04$}
    \addplot [
        domain=0:4.31392, 
        samples=2, 
        color=blue,
        style={ultra thick},
    ]
    {0};
    \addplot [
        domain=20.05962:20.84378, 
        samples=2, 
        color=blue,
        style={ultra thick},
    ]
    {0};
  \end{axis}
  \end{tikzpicture}
\end{center}
\begin{proof}
  Define $Q^\epsilon(x) := a(x) - c(\epsilon) b(x)$. For any $r > 0$, by the definition of $x^*$,
  there exists some $\epsilon^* \in (0, 1)$ such that $Q^{\epsilon'}(x^* + r) > 0$ for all $\epsilon \le \epsilon^*$.
  Suppose for contradiction that there exists $f \in \mathcal{A}$ such that $f(\epsilon^*) > x^* + r$.

  Since $\inf_{x \in [x^* + r,f(\epsilon^*)]}a(x) > 0$, we can take $0 < \epsilon' < \epsilon^*$ such that
  $$c(\epsilon')\le \f{\inf_{x \in [x^* + r,f(\epsilon^*)]}a(x)}{2(1 + \|b\|_{\infty; [x^* + r,f(\epsilon^*)]})}.$$
  Then $Q^{\epsilon'}(x) > 0$ for all $x \in [x^* + r, f(\epsilon^*)]$. Since $f\in \mathcal{A}$,
  $Q^{\epsilon'}(f(\epsilon')) \le 0$, therefore $f(\epsilon')\not \in [x^* + r, f(\epsilon^*)]$. By monotonicity,
  $f(\epsilon') \le f(\epsilon^*)$ and it follows that $f(\epsilon') < x^* + r$. Thus,
  $f(\epsilon') < x^* + r < f(\epsilon^*)$ and by intermediate value theorem, there
  exists some $\epsilon \in (\epsilon', \epsilon^*)$ such
  that $f(\epsilon) = x^* + r$. However, by the choice of $\epsilon^*$,
  $Q^\epsilon(f(\epsilon)) = Q^\epsilon(x^* + r) > 0$ which contradicts the fact that
  $Q^{\epsilon}(f(\epsilon)) \le 0$.
\end{proof}

We will want to apply the above lemma to the map $\epsilon \mapsto \|A\|_{\alpha; [t_0, t_0 + \epsilon]}$
for some $A \in \C^\alpha(\Delta_T; B)$. However, it is a priori not known whether or not this
map is continuous. Indeed, this is in general not the case by considering the Lipschitz norm of
the function $(s, t) \mapsto x_t - x_s$, where
$$x_t = t \mathbb{1}_{[0, 1]}(t) + \left(2t - 1\right)\mathbb{1}_{(1, \infty)}(t).$$
Nonetheless, for this particular map, for $\alpha \in (0, 1)$, we observe that
$$\|\delta x\|_{\alpha; [0, \epsilon]} = \|x\|_{\alpha; [0, \epsilon]} =
  \begin{cases}
    \epsilon^{1 - \alpha},        & \text{if } \epsilon \le 1, \\
    \max\left\{2(\epsilon - 1)^{1 - \alpha}, \epsilon^{1 - \alpha} + \epsilon^\alpha -
    \epsilon^{\alpha - 1}\right\} & \text{if } \epsilon > 1,
  \end{cases}$$
which is continuous in $\epsilon$. This turns out to be true in general as demonstrated in the
following lemma.

\begin{lemma}[Continuity of the H\"older norm]\label{lem:cont-holder}
  Let $V$ be a Banach space and $A \in \C^{\alpha}(\Delta_T; V)$ be a two parameter process
  which is jointly continuous and vanishes on its diagonal. Then, for any $t_0 \in [0, T)$ and
  $\alpha' \in (0, \alpha)$, the function
  \begin{align*}
    \epsilon \mapsto \|A\|_{\alpha'; [t_0, t_0 + \epsilon]}
  \end{align*}
  is continuous on $[0, T - t_0]$.
\end{lemma}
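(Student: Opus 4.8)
The plan is to prove continuity by splitting it into left-continuity and right-continuity at an arbitrary point $\epsilon_0 \in [0, T - t_0]$, and in each case to show that the supremum defining $\|A\|_{\alpha'; [t_0, t_0 + \epsilon]}$ cannot change too abruptly. The key structural fact to exploit is that $\alpha' < \alpha$: this gives us a quantitative gain whenever the pair $(u,v)$ nearly realizing the supremum has $|u - v|$ small, because then $\|A_{u,v}\|/|u-v|^{\alpha'} \le \|A\|_{\alpha;[t_0,t_0+\epsilon]}|u-v|^{\alpha - \alpha'}$, which is itself small. So ``dangerous'' near-maximizing pairs must be separated by a definite amount, and on that compact region the joint continuity of $A$ can be leveraged.

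For \textbf{right-continuity} at $\epsilon_0$: monotonicity of $\epsilon \mapsto \|A\|_{\alpha'; [t_0, t_0+\epsilon]}$ is clear (enlarging the interval can only increase the sup), so it suffices to bound $\|A\|_{\alpha'; [t_0, t_0+\epsilon_0 + h]}$ from above by $\|A\|_{\alpha'; [t_0, t_0+\epsilon_0]} + o(1)$ as $h \downarrow 0$. Given $(u,v)$ in the larger interval, either $|u - v| \le \rho$ for a threshold $\rho$ to be chosen, in which case $\|A_{u,v}\|/|u-v|^{\alpha'} \le M\rho^{\alpha-\alpha'}$ where $M := \|A\|_{\alpha;[0,T]}$; or $|u-v| > \rho$, in which case I would like to ``push'' the pair into $[t_0, t_0 + \epsilon_0]$. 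Here I would compare $A_{u,v}$ with $A_{u', v'}$ for the truncated pair $u' = u \wedge (t_0+\epsilon_0)$, $v' = v \wedge(t_0+\epsilon_0)$; using $\delta A$ (which lies in $\C^{2\alpha}$, hence is controlled) together with joint continuity of $A$, the difference is $o(1)$ uniformly over such pairs as $h \to 0$, while $|u'-v'|^{-\alpha'} \le \rho^{-\alpha'}$ stays bounded once $h$ is small relative to $\rho$. Choosing $\rho$ small first (to control the short pairs), then $h$ small, gives the bound.

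For \textbf{left-continuity} at $\epsilon_0 > 0$: now I want a lower bound, i.e. $\|A\|_{\alpha';[t_0, t_0+\epsilon_0 - h]} \ge \|A\|_{\alpha';[t_0,t_0+\epsilon_0]} - o(1)$. Pick $(u,v) \subseteq [t_0, t_0+\epsilon_0]$ nearly realizing the larger supremum up to $\eta$. If $|u-v| \le \rho$, then that supremum is itself $\le M\rho^{\alpha-\alpha'} + \eta$, so it is already small and there is nothing to prove beyond choosing $\rho$ appropriately. Otherwise $|u - v| > \rho$, and I replace $(u,v)$ by a slightly shrunk pair inside $[t_0, t_0 + \epsilon_0 - h]$ — concretely scale the interval $[u,v]$ toward $t_0$, or simply take $v' = v \wedge (t_0 + \epsilon_0 - h)$ and, if needed, shift $u$ — so that $|u' - v'| \ge \rho/2$ and $(u',v') \to (u,v)$ as $h \to 0$; joint continuity of $A$ and boundedness of $|u'-v'|^{-\alpha'}$ then give $\|A_{u',v'}\|/|u'-v'|^{\alpha'} \to \|A_{u,v}\|/|u-v|^{\alpha'}$. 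Since the left-hand side is $\le \|A\|_{\alpha';[t_0,t_0+\epsilon_0-h]}$, letting $h \to 0$ and then $\eta \to 0$ finishes this case. The endpoint $\epsilon_0 = 0$ (where $\|A\|_{\alpha';[t_0,t_0]} = 0$ since $A$ vanishes on the diagonal) follows from the short-pair estimate $\|A\|_{\alpha';[t_0,t_0+\epsilon]} \le M\epsilon^{\alpha-\alpha'} \to 0$.

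The \textbf{main obstacle} is making the ``pushing the near-maximizing pair into the sub-interval'' step uniform: a priori the near-maximizer could drift as we shrink the interval, and the factor $|u-v|^{-\alpha'}$ blows up if $u,v$ collide. This is exactly why the gap $\alpha - \alpha' > 0$ is essential — it confines all relevant pairs to $\{|u-v| > \rho\}$, a region on which, by uniform continuity of $A$ on the compact set $\Delta_T \cap \{|u-v| \ge \rho/2\}$, perturbations of size $o(1)$ in the endpoints produce $o(1)$ perturbations in $A_{u,v}/|u-v|^{\alpha'}$. Getting the quantifiers in the right order ($\eta$, then $\rho$, then $h$) is the delicate bookkeeping, but there is no deeper difficulty.
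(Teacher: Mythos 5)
Your proposal is correct, and it takes a genuinely different route from the paper's. You prove left- and right-continuity of $N(\epsilon)=\|A\|_{\alpha';[t_0,t_0+\epsilon]}$ directly: pairs with $|u-v|\le\rho$ are dispatched by the gain $\|A_{u,v}\|/|u-v|^{\alpha'}\le\|A\|_{\alpha}\,\rho^{\alpha-\alpha'}$, and separated pairs are truncated into the smaller interval and compared using uniform continuity of $A$ on the compact set $\Delta_T\cap\{|u-v|\ge\rho/2\}$, with the quantifiers taken in the order $\eta$, then $\rho$, then $h$. The paper instead argues by contradiction through a characterization of discontinuity points: $N$ jumps at $\epsilon^*$ if and only if $\lim_{h\to 0}\|A\|_{\alpha';[\epsilon^*-h,\epsilon^*+h]}>N(\epsilon^*)$, proved by splitting a near-maximizer across $\epsilon^*$ via the three-point decomposition $A_{u,v}=A_{u,\epsilon^*}+A_{\epsilon^*,v}+\delta A_{u,\epsilon^*,v}$ (with separate treatment of $|u-v|$ small and bounded below), and this is then ruled out by the same $h^{\alpha-\alpha'}$ bound. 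Both arguments rest on the two ingredients you identify ($\alpha'<\alpha$ and continuity on a compact set); yours avoids the contradiction scheme and the $\delta A$ splitting and produces an explicit modulus of continuity for $N$, at the cost of slightly heavier endpoint bookkeeping (e.g.\ one should note that for $h<\rho$ a separated pair cannot have $u>t_0+\epsilon_0$, so the truncated pair is nondegenerate, and that in the left-continuity step the near-maximizer is fixed before sending $h\to 0$, so plain continuity suffices there).

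One caveat: your parenthetical claim that $\delta A$ lies in $\C^{2\alpha}$ is false for a general $A\in\C^{\alpha}(\Delta_T;V)$; one only has $\|\delta A_{s,u,t}\|\le 3\|A\|_{\alpha}|t-s|^{\alpha}$. This does not damage the argument, because the quantity you actually need, $\|A_{u,v}-A_{u,v'}\|$ with $|v-v'|\le h$, is controlled directly by uniform continuity of the jointly continuous $A$ on the compact set $\Delta_T$, exactly as you state in your final paragraph; just drop the appeal to a $2\alpha$ bound on $\delta A$ when writing the proof out.
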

A proof is given in  Appendix~\ref{sec:cont-holder} for completeness.

We remark the requirement that $A$ is jointly continuous is not redundant as a two parameter process
in $\C^{\alpha}(\Delta_T; B)$ is not necessarily continuous as a map. This is illustrated by the example
$A : (s, t) \mapsto |t - s|\mathbb{1}_{[1, \infty)}(t)$.

\subsection{Estimates for solutions to RDEs}\label{sec:rough-ests}

We take $\alpha \in (\frac{1}{3}, \frac{1}{2})$, $\mathbb{\Gamma} = (\gamma, \Gamma) \in \C^{\alpha +}(\R_+; V)$,
and $\sigma \in C^{\frac{1}{\alpha}}(H; \CL(V; H))$.
Let $(x, x') \in \mathcal{D}_\gamma^{2\alpha}([0, \xi), H)$ denote the maximal solution to
\begin{equation}\label{eq:rde'}
  \dd x_t = b(x_t) \dd t + \sigma(x_t) \dd \mathbb{\Gamma}_t
\end{equation}
with life time $\xi$. Define $D_\gamma \sigma(x)_t = (D \sigma(x_t)) \sigma(x_t)$
which is a Gubinelli derivative of $\sigma(x)$ against $\gamma$, and we set
\begin{equation}\label{eq:eta-def}
  \eta_t = \int_0^t \sigma(x_s) \dd \mathbb{\Gamma}_s, \qquad R^\eta_{s, t} = \eta_{s, t} - \sigma(x_s) \gamma_{s, t},
\end{equation}
where the integral w.r.t. $\mathbb{\Gamma}$ denotes a second order rough integral.
We recall the following useful estimate.
\begin{lemma}\cite[Lemma 7.3]{Friz:20}
  Fixing $s < t \in [0, \xi)$.
  For any $u < v \in [s, t]$, we denote $R^x_{u, v} = x_{u, v} - \sigma(x_u)\gamma_{u, v}$
  and $R^\sigma_{u, v} = R^{\sigma(x)}_{u, v} = \sigma(x)_{u, v} - D_\gamma \sigma(x)_u \gamma_{u, v}$. Then
  \begin{equation}\label{eq:R-sigma-bdd}
    \begin{split}
      \|R^\sigma\|_{2\alpha; [s, t]}
      \le & \sup_{u < v \in [s, t]} \sup_{\lambda \in [0, 1]} \frac{1}{2}\|D^2\sigma ((1 - \lambda)x_u + \lambda x_v)\|
      \|x\|^2_{\alpha; [s, t]}                                                                                          \\
          & + \sup_{u \in [s, t]}\|D\sigma(x_u)\| \|R^x\|_{2\alpha; [s, t]}.
    \end{split}
  \end{equation}
\end{lemma}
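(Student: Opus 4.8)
The plan is to reduce the claim to the integral form of the first-order Taylor expansion of $\sigma$; this is the standard argument behind \cite[Lemma 7.3]{Friz:20}, and I would reproduce it as follows. Fix $s < t \in [0, \xi)$ and a pair $u < v \in [s, t]$. Since $D_\gamma \sigma(x)_u = D\sigma(x_u)\sigma(x_u)$, the first step is the algebraic splitting
\begin{equation*}
  R^\sigma_{u, v} = \sigma(x_v) - \sigma(x_u) - D\sigma(x_u)\sigma(x_u)\gamma_{u, v}
  = E_{u, v} + D\sigma(x_u)\, R^x_{u, v},
\end{equation*}
obtained by adding and subtracting $D\sigma(x_u) x_{u, v}$, where $E_{u, v} := \sigma(x_v) - \sigma(x_u) - D\sigma(x_u) x_{u, v}$ and we use $x_{u, v} - \sigma(x_u)\gamma_{u, v} = R^x_{u, v}$. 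As $D\sigma(x_u) \in \CL(H; \CL(V; H))$, the composition term is controlled by $\|D\sigma(x_u)\, R^x_{u, v}\| \le \|D\sigma(x_u)\|\, \|R^x_{u, v}\|$ in the relevant operator norms.

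Next I would bound $E_{u, v}$. Since $\sigma \in C^2$ and $H$ is a vector space, so that the segment $\lambda \mapsto (1 - \lambda)x_u + \lambda x_v$ stays in the domain, Taylor's theorem with integral remainder gives
\begin{equation*}
  E_{u, v} = \int_0^1 (1 - \lambda)\, D^2\sigma\bigl((1 - \lambda)x_u + \lambda x_v\bigr)(x_{u, v}, x_{u, v})\, \dd\lambda,
\end{equation*}
and, using $\int_0^1 (1 - \lambda)\, \dd\lambda = \tfrac12$ together with the fact that $D^2\sigma(\cdot) \in \CL(H \otimes H; \CL(V; H))$ is applied to the pair $(x_{u, v}, x_{u, v})$, we get
\begin{equation*}
  \|E_{u, v}\| \le \tfrac12 \sup_{\lambda \in [0, 1]} \bigl\|D^2\sigma\bigl((1 - \lambda)x_u + \lambda x_v\bigr)\bigr\|\, \|x_{u, v}\|^2.
\end{equation*}

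Finally I would assemble the estimate: combining the two previous displays, using $\|x_{u, v}\|^2 \le \|x\|_{\alpha; [s, t]}^2\, |v - u|^{2\alpha}$ and $\|R^x_{u, v}\| \le \|R^x\|_{2\alpha; [s, t]}\, |v - u|^{2\alpha}$, dividing by $|v - u|^{2\alpha}$, and then taking the supremum over $u < v \in [s, t]$ (which enlarges the $D\sigma$ and $D^2\sigma$ prefactors to the suprema appearing in the statement) yields the claimed inequality. I do not expect any genuine obstacle here; the one point to handle carefully is the bookkeeping of operator norms — $D^2\sigma$ acting as a bilinear map on two copies of the $x$-increment, and $D\sigma$ acting as a linear map on $R^x$ — so that the normalization matches the quantities $\|D^n\sigma(\cdot)\|$ on the right-hand side.
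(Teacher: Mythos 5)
Your proof is correct and is exactly the standard argument: the paper itself gives no proof but cites \cite[Lemma 7.3]{Friz:20}, and your decomposition $R^\sigma_{u,v}=E_{u,v}+D\sigma(x_u)R^x_{u,v}$ with the integral-remainder Taylor bound for $E_{u,v}$ is precisely how that lemma is proved there. The bookkeeping of the operator norms and the division by $|v-u|^{2\alpha}$ before taking suprema is handled correctly, so nothing is missing.
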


\begin{lemma}\label{lem:R-eta-ineq}
  Fixing $s < t \in [0, \xi)$, we introduce the shorthand $B_t^x = \int_0^t b(x_r) \dd r$,
  \begin{itemize}
    \item $a_n = \|D^n \sigma(x_\cdot)\|_{\infty; [s, t]}$ where $n = 0, 1, 2$,
    \item $a_2' = \sup_{u < v \in [s, t]} \sup_{\lambda \in [0, 1]}\|D^2\sigma ((1 - \lambda)x_u + \lambda x_v)\|$.
  \end{itemize}
  Then, we have the estimate
  \begin{equation}\label{eq:rough-apriori}
    \begin{split}
      \|R^\eta\|_{2\alpha; [s, t]} & \lesssim a_2' \|R^\eta\|_{2\alpha; [s, t]}^2|t - s|^{3\alpha}                         \\
                                   & + (a_1 + (a_1^2 + a_2 a_0)|t - s|^\alpha)\|R^\eta\|_{2\alpha; [s, t]}|t - s|^{\alpha} \\
                                   & + a_2'\|B^x\|_{2\alpha; [s, t]}^2|t - s|^{3\alpha}                                    \\
                                   & + (a_1 + (a_1^2 + a_2 a_0)|t - s|^\alpha)\|B^x\|_{2\alpha; [s, t]}|t - s|^{\alpha}    \\
                                   & + (a_2' a_0^2 + a_0 a_1^2 + a_2 a_0^2)|t - s|^\alpha + a_0 a_1
    \end{split}
  \end{equation}
  where the positive constant indicated by $\lesssim$ depends on
  $\|\mathbb{\Gamma}\|_{\alpha}$.
\end{lemma}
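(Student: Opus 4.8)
The plan is to insert the rough-integral remainder bound \eqref{eq:Remainder-ineq} into the definition of $R^\eta$ and then control each piece that appears by one of the six atoms on the right-hand side of \eqref{eq:rough-apriori}. Fix $s<t\in[0,\xi)$. Since $(\sigma(x_\cdot),D_\gamma\sigma(x))$ is controlled by $\gamma$ with remainder $R^\sigma$, applying \eqref{eq:Remainder-ineq} on an arbitrary subinterval $[u,v]\subseteq[s,t]$ to $\int_u^v\sigma(x_r)\dd\mathbb{\Gamma}_r$ and recalling $R^\eta_{u,v}=\int_u^v\sigma(x_r)\dd\mathbb{\Gamma}_r-\sigma(x_u)\gamma_{u,v}$, one gets
$$\|R^\eta_{u,v}\|\le C_\alpha\big(\|\gamma\|_\alpha\|R^\sigma\|_{2\alpha;[s,t]}+\|\Gamma\|_{2\alpha}\|D_\gamma\sigma(x)\|_{\alpha;[s,t]}\big)|v-u|^{3\alpha}+\|D_\gamma\sigma(x)_u\|\,\|\Gamma_{u,v}\|.$$
Using $\|\Gamma_{u,v}\|\le\|\Gamma\|_{2\alpha}|v-u|^{2\alpha}$, the trivial bound $\sup_{u\in[s,t]}\|D_\gamma\sigma(x)_u\|\le a_0a_1$, dividing by $|v-u|^{2\alpha}$ and taking the supremum over $u<v\in[s,t]$ will give
$$\|R^\eta\|_{2\alpha;[s,t]}\lesssim\big(\|R^\sigma\|_{2\alpha;[s,t]}+\|D_\gamma\sigma(x)\|_{\alpha;[s,t]}\big)|t-s|^\alpha+a_0a_1,$$
with implicit constant depending only on $\|\mathbb{\Gamma}\|_\alpha$ through $\|\gamma\|_\alpha$ and $\|\Gamma\|_{2\alpha}$.

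Next I would estimate the two seminorms on the right. For $\|R^\sigma\|_{2\alpha;[s,t]}$ I invoke the quoted bound \eqref{eq:R-sigma-bdd}, namely $\|R^\sigma\|_{2\alpha;[s,t]}\le\tfrac12 a_2'\|x\|^2_{\alpha;[s,t]}+a_1\|R^x\|_{2\alpha;[s,t]}$. For $\|D_\gamma\sigma(x)\|_{\alpha;[s,t]}=\|D\sigma(x_\cdot)\sigma(x_\cdot)\|_{\alpha;[s,t]}$ I apply the Leibniz rule for the Hölder seminorm together with the chain rule: $t\mapsto\sigma(x_t)$ is bounded by $a_0$ and $(a_1\|x\|_{\alpha;[s,t]})$-Hölder, and $t\mapsto D\sigma(x_t)$ is bounded by $a_1$ and $(a_2\|x\|_{\alpha;[s,t]})$-Hölder, whence $\|D_\gamma\sigma(x)\|_{\alpha;[s,t]}\le(a_1^2+a_0a_2)\|x\|_{\alpha;[s,t]}$. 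It then remains to re-express $\|x\|_{\alpha;[s,t]}$ and $\|R^x\|_{2\alpha;[s,t]}$ in the desired quantities: from $x_t=x_0+B^x_t+\eta_t$ we have $R^x_{u,v}=B^x_{u,v}+R^\eta_{u,v}$, so $\|R^x\|_{2\alpha;[s,t]}\le\|B^x\|_{2\alpha;[s,t]}+\|R^\eta\|_{2\alpha;[s,t]}$; and $x_{u,v}=B^x_{u,v}+\sigma(x_u)\gamma_{u,v}+R^\eta_{u,v}$ combined with the elementary interpolation $\|\cdot\|_{\alpha;[s,t]}\le\|\cdot\|_{2\alpha;[s,t]}|t-s|^\alpha$ yields $\|x\|_{\alpha;[s,t]}\lesssim(\|B^x\|_{2\alpha;[s,t]}+\|R^\eta\|_{2\alpha;[s,t]})|t-s|^\alpha+a_0$.

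Finally I would substitute these into the first display, expand using $(p+q+r)^2\lesssim p^2+q^2+r^2$, and collect the resulting monomials in $\|R^\eta\|_{2\alpha;[s,t]}$, $\|B^x\|_{2\alpha;[s,t]}$, the $a_i$'s, $a_2'$ and powers of $|t-s|^\alpha$; each term produced is one of the six on the right of \eqref{eq:rough-apriori}, the overall constant absorbing $C_\alpha$, $\|\gamma\|_\alpha$ and $\|\Gamma\|_{2\alpha}$, hence depending only on $\|\mathbb{\Gamma}\|_\alpha$. The work here is essentially bookkeeping; the only points needing slight care are the monotonicity of the seminorms under shrinking the interval (so that restricting from $[s,t]$ to $[u,v]$ is harmless) and the regularity--time trade-off just used. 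No contraction or smallness argument is required at this stage, since the statement is only an inequality -- the quadratic self-dependence on $\|R^\eta\|_{2\alpha}$ that it leaves behind is precisely what Lemma~\ref{lem:poly-bound} is designed to absorb afterwards.
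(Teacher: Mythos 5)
Your proposal is correct and follows essentially the same route as the paper: bound $R^\eta_{u,v}$ via the remainder estimate \eqref{eq:Remainder-ineq} plus the $D_\gamma\sigma(x_u)\Gamma_{u,v}$ term, control $\|R^\sigma\|_{2\alpha}$ by \eqref{eq:R-sigma-bdd} and $\|D_\gamma\sigma(x)\|_\alpha$ by the product/chain rule, then substitute $\|R^x\|_{2\alpha}\le\|R^\eta\|_{2\alpha}+\|B^x\|_{2\alpha}$ and the bound on $\|x\|_\alpha$ before collecting terms. The only difference is cosmetic: you make explicit the passage from the pointwise estimate on arbitrary subintervals $[u,v]\subseteq[s,t]$ to the $2\alpha$-Hölder seminorm, a step the paper leaves implicit.
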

\begin{proof}
  We omit references to the interval from notations for brevity. Firstly, we observe that
  $$\|R_{s, t}^\eta\| \le \|R^\eta_{s, t} - D_\gamma \sigma(x_s) \Gamma_{s, t}\| + \|D_\gamma \sigma(x_s) \Gamma_{s, t}\|$$
  where straightaway, the second term can be estimated by
  \begin{equation}\label{eq:est-Dgamma}
    \|D_\gamma \sigma(x_s) \Gamma_{s, t}\| \le \|D\sigma(x_s)\sigma(x_s)\| \|\Gamma\|_{2\alpha}|t - s|^{2\alpha}
    \lesssim a_1 a_0|t - s|^{2\alpha}.
  \end{equation}
  For the first term, by writing the remainder estimate~\eqref{eq:Remainder-ineq} in the form
  \begin{equation}\label{eq:R-eta-bdd}
    \begin{split}
      & \|R^\eta_{s, t} - D_\gamma \sigma(x_s) \Gamma_{s, t}\|\\
      & \le C_\alpha (\|\gamma\|_\alpha \|R^{\sigma}\|_{2\alpha; [s, t]} +
      \|\Gamma\|_{2\alpha} \|D_\gamma \sigma(x)\|_{\alpha; [s, t]})|t - s|^{3\alpha},
    \end{split}
  \end{equation}
  it suffices to estimate $\|R^\sigma\|_{2\alpha}$ and $\|D_\gamma\sigma(x)\|_{\alpha}$.
  By~\eqref{eq:R-sigma-bdd}, we have
  \begin{align*}
    \|R^\sigma\|_{2\alpha}
    \le & \f 12 a_2' \|x\|^2_{\alpha} + a_1 \|R^x\|_{2\alpha}\\
    \le & a_2' \|R^x\|^2_{2\alpha} |t - s|^{2\alpha} + a_2' a_0^2 \|\gamma\|_\alpha^2 + a_1 \|R^x\|_{2\alpha}
  \end{align*}
  where in the last step, we used the trivial estimate
  $$\|x\|_{\alpha} \le \|R^x\|_{2\alpha}|t - s|^{\alpha} + \|\sigma(x_\cdot)\|_{\infty; [s, t]} \|\gamma\|_{\alpha}.$$
  Since $R_{s,t}^x=R^\eta_{s,t}+B_{s,t}^x$, we also have
  $$\|R^x\|_{2\alpha} \le \|R^\eta\|_{2\alpha} + \|B^x\|_{2\alpha}.$$
  Hence, combining the above, we have
  \begin{equation}\label{eq:R-sigma-bdd'}
    \begin{split}
      \|R^\sigma\|_{2\alpha}
      \le & 2 a_2' (\|R^\eta\|_{2\alpha}^2 + \|B^x\|_{2\alpha}^2) |t - s|^{2\alpha}\\
      & + a_2' a_0^2 \|\gamma\|_\alpha^2 + a_1 (\|R^\eta\|_{2\alpha} + \|B^x\|_{2\alpha}).
    \end{split}
  \end{equation}
  On the other hand, we observe
  \begin{equation}\label{eq:deriv-gamma-bdd}
    \begin{split}
      \|D_\gamma\sigma(x)\|_{\alpha}
      \le & (a_1^2 + a_2 a_0)\|x\|_\alpha\\
      \lesssim & (a_1^2 + a_2 a_0)(\|R^x\|_{2\alpha}|t - s|^\alpha + a_0 \|\gamma\|_\alpha)\\
      \lesssim & (a_1^2 + a_2 a_0)(\|R^\eta\|_{2\alpha} + \|B^x\|_{2\alpha})|t - s|^\alpha + a_0(a_1^2 + a_2 a_0).
    \end{split}
  \end{equation}
  Thus, substituting \eqref{eq:R-sigma-bdd'} and \eqref{eq:deriv-gamma-bdd} into \eqref{eq:R-eta-bdd},
  we obtain
  \begin{align*}
             & \|R^\eta_{s, t} - D_\gamma \sigma(x_s) \Gamma_{s, t}\|                                               \\
    \lesssim & a_2' (\|R^\eta\|^2_{2\alpha} + \|B^x\|^2_{2\alpha}) |t - s|^{5\alpha}                                \\
             & + (a_1 + (a_1^2 + a_2 a_0)|t - s|^\alpha)(\|R^\eta\|_{2\alpha} + \|B^x\|_{2\alpha})|t - s|^{3\alpha} \\
             & + (a_2' a_0^2 + a_0 a_1^2 + a_2 a_0^2) |t - s|^{3\alpha}.
  \end{align*}
  This combined with \eqref{eq:est-Dgamma} completes the proof.
\end{proof}

\begin{assumption}\label{assume2}
  Let $b\in \Lip_{\loc}(H; H)$ satisfies Assumption~\ref{as:lin-growth} with control $f$. 
  Let $\mathbb{\Gamma} = (\gamma, \Gamma) \in \C^{\alpha+}(\R_+; V)$ for some 
  $\alpha \in (\frac{1}{3}, \frac{1}{2})$ and $\sigma \in C^{\frac{1}{\alpha}}(H; \CL(V; H))$.
  Furthermore, suppose that for all $x \in H$,
  \begin{longlist}
    \item[(R.1)]\label{eq:kappa_b} $\|b(x)\| \le f(\|x\|)^{1 + \kappa \alpha}$,
    \item[(R.2)]\label{eq:kappa_n} $\|D^n \sigma(x)\| \le f(\|x\|)^{(\theta - n \kappa) \alpha}$ for $n = 0, 1, 2$,
  \end{longlist}
  for some $\theta \in [0, 1)$ and $\kappa \in [0, \f 1 2)$.
\end{assumption}

We remark that by possibly increasing $\theta$, we can always assume $2 \kappa < \theta$ so that
the exponents in \ref{eq:kappa_n} is positive. Moreover, the assumption
$\sigma \in C^{\frac{1}{\alpha}}(H; \CL(V; H))$ is assumed to obtain an a priori existence
of a unique maximal solution. As our strategy is to show that any maximal solutions are in fact global, 
for only existence of a global solution, it will be instead sufficient to assume 
$\sigma \in C^{\frac{1}{\alpha} - 1}(H; \CL(V; H))$ (cf. \cite[Theorem 3.2, 3.3]{Davie:07}, 
\cite[Theorem 1]{Lejay:09}). 

\begin{lemma}\label{lem:R-bdd}
  Suppose Assumption~\ref{assume2} holds with constants $\theta$ and $\kappa$. Then,
  there exists some $K \ge 0$, such that for any $R \ge 0$ and any sufficiently large $k \in \N$,
  $$(\epsilon_k)^{(1 + \theta)\alpha} \|R^\eta\|_{2\alpha; [\tau^{Rk}, (\tau^{Rk} + \epsilon_k) \wedge \tau^{R(k + 1)}]} \le K$$
  where $\epsilon_k = f(R(k + 1))^{-1}$ and $\tau^r = \inf\{t : \|x_t\| \ge r\}$.
\end{lemma}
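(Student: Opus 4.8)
The plan is to feed the a priori estimate of Lemma~\ref{lem:R-eta-ineq} into the Flying Fish Lemma (Lemma~\ref{lem:poly-bound}), after first arranging all the coefficients $a_0, a_1, a_2, a_2'$ and $\|B^x\|_{2\alpha}$ so that they are controlled by powers of $f(R(k+1))$ uniformly in $k$. Fix $R \ge 0$ and a large $k$, write $r_k := R(k+1)$, $\epsilon_k := f(r_k)^{-1}$, and let $I_k^\epsilon = [\tau^{Rk}, (\tau^{Rk} + \epsilon) \wedge \tau^{R(k+1)}]$ for $\epsilon \in [0, \epsilon_k]$. On any such interval $\|x_t\| \le r_k$, so by \ref{eq:kappa_n} we get $a_n \le f(r_k)^{(\theta - n\kappa)\alpha}$ for $n=0,1,2$, and similarly $a_2' \le f(r_k)^{(\theta - 2\kappa)\alpha}$ (the midpoint $(1-\lambda)x_u + \lambda x_v$ also has norm $\le r_k$ on $I_k^\epsilon$ by convexity of the ball). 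For the drift term, \ref{eq:kappa_b} gives $\|b(x_t)\| \le f(r_k)^{1 + \kappa\alpha}$, hence $\|B^x\|_{\infty; I_k^\epsilon} \le \epsilon_k f(r_k)^{1+\kappa\alpha} = f(r_k)^{\kappa\alpha}$, and bounding $\|B^x_{u,v}\| \le \|b\|_\infty |u-v|$ against $|u-v|^{2\alpha}$ on an interval of length $\le \epsilon_k \le 1$ (assuming $\epsilon_k \le 1$ for $k$ large, since $f \to \infty$) yields $\|B^x\|_{2\alpha; I_k^\epsilon} \le f(r_k)^{\kappa\alpha} \epsilon_k^{1 - 2\alpha} = f(r_k)^{\kappa\alpha - 1 + 2\alpha}$.

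Next I would substitute these into \eqref{eq:rough-apriori}. Set $\Phi(\epsilon) := \|R^\eta\|_{2\alpha; I_k^\epsilon}$. Since $|t-s| \le \epsilon$ on $I_k^\epsilon$, and since every explicit occurrence of $|t-s|$ in \eqref{eq:rough-apriori} that is \emph{not} already paired with a factor of $\Phi$ can be bounded by $\epsilon_k$ (or a power thereof), the inequality takes the shape
\begin{equation}\label{eq:flying-fish-input}
  \Phi(\epsilon) \le c(\epsilon)\bigl(\Phi(\epsilon)^2 + \Phi(\epsilon) + 1\bigr) + C_k
\end{equation}
where $c$ is a continuous, non-decreasing function of $\epsilon$ with $c(0) = 0$ (coming from the positive powers of $|t-s|$ multiplying the $\Phi$-terms and from $\epsilon^\alpha \to 0$), and $C_k$ collects the $\epsilon$-independent remainder $a_0 a_1$ together with the $\epsilon_k$-powers of the various monomials $a_2' a_0^2$, $a_0 a_1^2$, $a_2 a_0^2$ and the $\|B^x\|_{2\alpha}$-contributions — all of which are powers of $f(r_k)$. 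The crucial bookkeeping step is to check that, after multiplying through by $(\epsilon_k)^{(1+\theta)\alpha}$, every term in $C_k \cdot (\epsilon_k)^{(1+\theta)\alpha}$ has a non-positive net exponent in $f(r_k)$, so that $C_k (\epsilon_k)^{(1+\theta)\alpha}$ is bounded uniformly in $k$; the exponent $(1+\theta)\alpha$ in the statement is precisely calibrated to make the worst term (the $a_2' a_0^2 |t-s|^\alpha$ term, which is $\asymp f(r_k)^{(3\theta - 2\kappa)\alpha} \epsilon_k^\alpha = f(r_k)^{(3\theta - 2\kappa - 1)\alpha}$, and the $a_0 a_1$ term $\asymp f(r_k)^{(2\theta - \kappa)\alpha}$) work out with room to spare given $\theta < 1$, $\kappa < \tfrac12$. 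Similarly one checks the $c(\epsilon)$ coefficients scale harmlessly.

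To conclude, I would introduce $g(\epsilon) := (\epsilon_k)^{(1+\theta)\alpha} \Phi(\epsilon / \epsilon_k \cdot \epsilon_k)$ — more cleanly, rescale so that $h(\epsilon) := (\epsilon_k)^{(1+\theta)\alpha}\|R^\eta\|_{2\alpha; I_k^{\epsilon \epsilon_k}}$ is a non-decreasing function on $[0,1]$, continuous by Lemma~\ref{lem:cont-holder} (applicable since $R^\eta$ is jointly continuous on $\Delta$ and vanishes on the diagonal, and since $2\alpha$ can be taken strictly below the true H\"older exponent of $R^\eta$ using $\mathbb{\Gamma} \in \C^{\alpha+}$), with $h(0) = 0$. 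Plugging \eqref{eq:flying-fish-input} in gives an inequality of the form $a(h(\epsilon)) \le c(\epsilon) b(h(\epsilon))$ for suitable polynomials $a(x) = x - M x^2 - \text{(lower)}$ with $\liminf_{x\to\infty} a(x)$... — here I need $a$ to eventually be positive, which forces me to instead isolate the genuinely quadratic-in-$\Phi$ term carefully: the cleanest route is to note that on $[0,1]$ the factor $c(\epsilon) \le c(1)$ can be made $< \tfrac14$ by shrinking the range to $[0, \epsilon_0]$ with $\epsilon_0$ independent of $k$, absorbing the $\Phi^2$ and $\Phi$ terms, and then apply Lemma~\ref{lem:poly-bound} with $a(x) = \tfrac12 x - (\text{const})$, $b(x) = x^2 + 1$, $c$ the residual modulus of continuity, obtaining a uniform $\epsilon^* \in (0, \epsilon_0]$ and a uniform bound $\sup h \le x^* + r$. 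Taking $K$ to be this bound (which, tracing through, is at worst a fixed multiple of the uniform bound on $C_k(\epsilon_k)^{(1+\theta)\alpha}$ plus the radius $x^*$ of the quadratic) finishes the proof, since $\epsilon^*$ being bounded below uniformly in $k$ means that for $k$ large, $\epsilon^* \epsilon_k \ge \epsilon_k$ is impossible — rather, we get the bound at scale $\epsilon^* \epsilon_k$ and then a final rescaling by $(\epsilon^*)^{-(1+\theta)\alpha}$, a $k$-independent constant, absorbs into $K$ the passage from interval length $\epsilon^* \epsilon_k$ to $\epsilon_k$.

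The main obstacle I anticipate is the exponent bookkeeping in the previous-to-last paragraph: one must verify that \emph{every} monomial appearing in $C_k$ (there are five distinct ones, plus the $\|B^x\|_{2\alpha}$-terms which after substitution of $\|B^x\|_{2\alpha} \asymp f(r_k)^{\kappa\alpha - 1 + 2\alpha}$ themselves split into several) acquires a non-positive $f(r_k)$-exponent once multiplied by $(\epsilon_k)^{(1+\theta)\alpha} = f(r_k)^{-(1+\theta)\alpha}$, and this is where the hypotheses $\theta < 1$ and $\kappa < \tfrac12$ are consumed; a secondary subtlety is making sure the H\"older-continuity hypothesis of Lemma~\ref{lem:cont-holder} genuinely applies, which is why Assumption~\ref{assume2} asks for $\mathbb{\Gamma} \in \C^{\alpha+}$ rather than merely $\C^\alpha$ (so that $R^\eta \in \C^{3\alpha'}$ for some $\alpha' > \alpha$, hence lies in $\C^{2\alpha}$ with room, and Lemma~\ref{lem:cont-holder} gives continuity of the $2\alpha$-norm).
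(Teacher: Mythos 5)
Your overall strategy is the same as the paper's: bound $a_0,a_1,a_2,a_2'$ and $\|B^x\|_{2\alpha}$ by powers of $f(R(k+1))$ on the interval, feed these into \eqref{eq:rough-apriori} from Lemma~\ref{lem:R-eta-ineq}, use Lemma~\ref{lem:cont-holder} to get a continuous non-decreasing function of the interval length, and invoke Lemma~\ref{lem:poly-bound}. The bookkeeping you sketch is essentially the right one (modulo a slip: from $\|B^x_{u,v}\|\le\|b\|_{\infty}|u-v|$ you should get $\|B^x\|_{2\alpha}\le f(R(k+1))^{1+\kappa\alpha}\epsilon_k^{1-2\alpha}=f(R(k+1))^{(2+\kappa)\alpha}$, not $f(R(k+1))^{\kappa\alpha}\epsilon_k^{1-2\alpha}$; the correct, larger bound still closes, as in the paper, but your version understates it by a factor $f(R(k+1))$).

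The genuine gap is in your concluding step. After rescaling to $h(\epsilon)=\epsilon_k^{(1+\theta)\alpha}\|R^\eta\|_{2\alpha;I_k^{\epsilon\epsilon_k}}$ on $[0,1]$, Lemma~\ref{lem:poly-bound} only gives you a bound $h(\epsilon^*)\le K$ at some uniform $\epsilon^*\in(0,1)$, i.e.\ control of $\|R^\eta\|_{2\alpha}$ on the \emph{shorter} interval of length $\epsilon^*\epsilon_k$; what the lemma asserts is the bound at $\epsilon=1$, on the interval of length $\epsilon_k$. Your proposed ``final rescaling by $(\epsilon^*)^{-(1+\theta)\alpha}$'' does not bridge this: the H\"older norm is monotone increasing in the interval, so the norm over $[\tau^{Rk},\tau^{Rk}+\epsilon_k]$ involves increments $(u,v)$ with $v>\tau^{Rk}+\epsilon^*\epsilon_k$ that are simply not controlled by the norm over the shorter interval times any constant. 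The paper avoids this by \emph{not} normalizing: it works with $N^k_\epsilon=\epsilon^{(1+\theta)\alpha}\|R^\eta\|_{2\alpha;I(k,\epsilon)}$ at the actual scale, extends it constantly beyond $\epsilon_k$ (the frozen function $\tilde N^k_\epsilon=N^k_\epsilon\mathbb{1}_{\epsilon\le\epsilon_k}+N^k_{\epsilon_k}\mathbb{1}_{\epsilon>\epsilon_k}$ still satisfies the quadratic inequality on all of $(0,1)$ because the right-hand side is increasing in $\epsilon$), applies Lemma~\ref{lem:poly-bound} to the family $\{\tilde N^k\}$ to get one $\epsilon^*$ and one $K$, and then uses that $\epsilon_k=f(R(k+1))^{-1}\downarrow 0$, so for all sufficiently large $k$ one has $\epsilon_k\le\epsilon^*$ and hence $N^k_{\epsilon_k}=\tilde N^k_{\epsilon_k}\le\tilde N^k_{\epsilon^*}\le K$ — this is exactly where the ``sufficiently large $k$'' in the statement enters. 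Your normalization discards the smallness of $\epsilon_k$, which is the mechanism that makes the uniform $\epsilon^*$ usable; to repair your argument you would either have to adopt the paper's freezing device, or note that in your rescaled inequality the coefficient is $c(\epsilon\epsilon_k)^\lambda\le c\,\epsilon_k^\lambda$, uniformly small for large $k$, and run a separate no-jump/continuity argument from $h(0)=0$ up to $\epsilon=1$ — but not the multiplicative rescaling as written. (A secondary muddle: your aside about absorbing the $\Phi^2$ term by making $c(\epsilon)$ small cannot work as absorption, since $\Phi^2$ is not a priori bounded; the whole point of Lemma~\ref{lem:poly-bound} is to replace absorption by a continuity argument.)
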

\begin{proof}
  Denoting $I(k, \epsilon) = [\tau^{Rk}, (\tau^{Rk} + \epsilon) \wedge \tau^{R(k + 1)}]$, we define the map
  $$N^k : \epsilon \mapsto N^k_\epsilon := \epsilon^{(1 + \theta)\alpha} \|R^\eta\|_{2\alpha; I(k, \epsilon)},$$
  which is continuous and non-decreasing by Lemma~\ref{lem:cont-holder}.

  We aim to apply Lemma~\ref{lem:poly-bound}. Adopting the notations as in Lemma~\ref{lem:R-eta-ineq},
  by multiplying both sides of inequality~\eqref{eq:rough-apriori} by $\epsilon^{(1 + \theta)\alpha}$,
  we have
  \begin{equation}\label{eq:est-Nk}
    \begin{split}
      N^k_\epsilon & \lesssim a_2' (N^k_\epsilon)^2 \epsilon^{(2 - \theta)\alpha}
      + (a_1 + (a_1^2 + a_2 a_0) \epsilon^\alpha)N^k_\epsilon \epsilon^{\alpha}\\
      & + a_2'\|B^x\|_{2\alpha; I(k, \epsilon)}^2 \epsilon^{(4 + \theta)\alpha}
      + (a_1 + (a_1^2 + a_2 a_0) \epsilon^\alpha)\|B^x\|_{2\alpha; I(k, \epsilon)}\epsilon^{(2 + \theta)\alpha}\\
      & + (a_2' a_0^2 + a_0 a_1^2 + a_2 a_0^2) \epsilon^{(2 + \theta)\alpha}
      + a_0 a_1\epsilon^{(1 + \theta)\alpha}.
    \end{split}
  \end{equation}
  Now, by Assumption \ref{eq:kappa_b} on $b$, for $\epsilon \le \epsilon_k$, we observe that
  \begin{align*}
    \|B^x\|_{2\alpha; I(k, \epsilon)} & \le \sup_{s < t \in I(k, \epsilon)} \frac{1}{|t - s|^{2\alpha}} \int_s^t \|b(x_r)\|\dd r \\
                                      & \le \epsilon_k^{1 - 2\alpha} f(R(k + 1))^{1 + \kappa \alpha}
    = \epsilon_k^{-(2 + \kappa) \alpha}.
  \end{align*}
  Hence, this estimate alongside the Assumption~\ref{eq:kappa_n} on $\sigma$ implies
  $$a_2'\|B^x\|_{2\alpha; I(k, \epsilon)}^2 \epsilon^{(4 + \theta)\alpha}
    \lesssim \epsilon^{(4 + \theta - \theta + 2\kappa - 2(2 + \kappa))\alpha} = 1.$$
  Denoting $\lambda = (1 - \theta + \kappa)\alpha$, we have
  $$(a_1 + (a_1^2 + a_2 a_0) \epsilon^\alpha) \epsilon^\alpha
    \lesssim \epsilon^{((1 - \theta + \kappa) \wedge (2 - 2\theta + 2\kappa))\alpha} = \epsilon^{\lambda},$$
  and so
  $$(a_1 + (a_1^2 + a_2 a_0) \epsilon^\alpha)\|B^x\|_{2\alpha; I(k, \epsilon)}\epsilon^{(2 + \theta)\alpha}
    \lesssim \epsilon^{(1 + \theta + (1 - \theta + \kappa) - (2 + \kappa))\alpha} = 1.$$
  Moreover, $(a_2' a_0^2 + a_0 a_1^2 + a_2 a_0^2)\epsilon^{(2 + \theta)\alpha} \lesssim \epsilon^{2\lambda}$.
  Thus, by substituting the above estimates into \eqref{eq:est-Nk}, we obtain that, for any $\epsilon \le \epsilon_k$
  \begin{align*}
    N^k_\epsilon \lesssim & (N^k_\epsilon)^2 \epsilon^{2\lambda}
    + N^k_\epsilon (\epsilon^{\lambda} + \epsilon^{2\lambda} )+ 1 + \epsilon^{\lambda} + \epsilon^{2\lambda}.
  \end{align*}
  Namely, for any $\epsilon \in (0, \epsilon_k \wedge 1]$, we have
  $$N^k_\epsilon \le c\epsilon^{\lambda}((N^k_\epsilon)^2 + N^k_\epsilon) + c,$$
  for some constant $c>0$ independent of $k$ and $R$.
  Thus, setting
  $$\tilde N^k_\epsilon = N^k_\epsilon \mathbb{1}_{\epsilon \le \epsilon_k} + N^k_{\epsilon_k}\mathbb{1}_{\epsilon > \epsilon_k}$$
  for $\epsilon_k < 1$, we have that
  $$\tilde N^k_\epsilon - c \le c \epsilon^{\lambda}((\tilde N^k_\epsilon)^2 + \tilde N^k_\epsilon)$$
  for all $\epsilon \in (0, 1)$. Consequently, we may apply Lemma~\ref{lem:poly-bound} to
  $\{\tilde N^k\}_{k \in \N}$, from which we obtain some $\epsilon^* > 0$ and $K \in \R_+$
  such that $\tilde N^k_{\epsilon^*} \le K$ for all $k$.
  Finally, since $\epsilon_k = f(R(k + 1))^{-1} \downarrow 0$ as $k \to \infty$, it follows that
  $\epsilon_k \le \epsilon^*$ for all sufficiently large $k$ and thus for such $k$,
  $N_{\epsilon_k}^k = \tilde N_{\epsilon_k}^k \le \tilde N^k_{\epsilon^*} \le K$ as claimed.
\end{proof}

\begin{theorem}\label{thm:rough}
  Under Assumption \ref{assume2}, the RDE~\eqref{eq:rde'} has a unique global solution.
\end{theorem}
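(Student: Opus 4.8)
The plan is to follow the scheme of Theorem~\ref{thm:young}: regard the rough integral $\eta_t=\int_0^t\sigma(x_s)\,\dd\mathbb{\Gamma}_s$ as the driving curve of the ODE $\dd x_t=b(x_t)\,\dd t+\dd\eta_t$ for $x$, use the a priori estimate of Lemma~\ref{lem:R-bdd} to verify the interactive relation~\eqref{eq:local-holder} for $\eta$, and then conclude non-explosion from Lemma~\ref{lem:fast-growth}. First I would invoke the local theory: under Assumption~\ref{assume2} we have $b\in\Lip_{loc}(H;H)$ and $\sigma\in C^{1/\alpha}$, so~\eqref{eq:rde'} admits a unique maximal solution $(x,x')\in\mathcal{D}_\gamma^{2\alpha}([0,\xi);H)$ with $x'=\sigma(x)$ and $\limsup_{t\uparrow\xi}\|x_t\|=\infty$ whenever $\xi<\infty$ (see \cite[Sec.~8]{Friz:20}). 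It therefore remains only to show $\xi>T$ for every fixed $T>0$, after which uniqueness of the global solution is automatic.

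Fix $T>0$ and set $\beta=1+(1-\theta)\alpha$, which lies in $(1,\tfrac32)\subset(1,2)$ since $\theta\in[0,1)$ and $\alpha\in(\tfrac13,\tfrac12)$. Recalling that a control function obeys $f(s)=o(s^{1+\epsilon})$ for every $\epsilon>0$ (being non-decreasing with $\int^\infty\!\mathrm{d}s/f(s)=\infty$), the growth bound~\ref{eq:kappa_b} together with Assumption~\ref{as:lin-growth} shows that $b$ satisfies Assumption~\ref{cond:perp} with this $\beta$, the constant $b_a^T$ being finite since $b$ is continuous. By Lemma~\ref{lem:R-bdd} there is a constant $K_0\ge0$, \emph{independent of $R$}, such that for every $R\ge0$ and every large enough $k$,
$$\|R^\eta\|_{2\alpha;\,I_k}\le K_0\,\epsilon_k^{-(1+\theta)\alpha},\qquad \epsilon_k:=f(R(k+1))^{-1},\quad I_k:=\bigl[\tau^{Rk},(\tau^{Rk}+\epsilon_k)\wedge\tau^{R(k+1)}\bigr],$$
where $\tau^r=\inf\{t:\|x_t\|\ge r\}$. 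Writing $\eta_{\tau^{Rk},\tau^{Rk}+t}=\sigma(x_{\tau^{Rk}})\gamma_{\tau^{Rk},\tau^{Rk}+t}+R^\eta_{\tau^{Rk},\tau^{Rk}+t}$ and using $\|x_{\tau^{Rk}}\|\le R(k+1)$ with~\ref{eq:kappa_n} for $n=0$, I would estimate, for $0\le t\le\epsilon_k\wedge(\tau^{R(k+1)}-\tau^{Rk})$,
$$\|\eta_{\tau^{Rk},\tau^{Rk}+t}\|\le f(R(k+1))^{\theta\alpha}\|\gamma\|_\alpha\,t^{\alpha}+K_0\,\epsilon_k^{-(1+\theta)\alpha}\,t^{2\alpha}\le\bigl(\|\gamma\|_\alpha+K_0\bigr)\,t^{(1-\theta)\alpha},$$
the last inequality using $f(R(k+1))^{\theta\alpha}=\epsilon_k^{-\theta\alpha}\le t^{-\theta\alpha}$ and $\epsilon_k^{-(1+\theta)\alpha}\le t^{-(1+\theta)\alpha}$ for $t\le\epsilon_k$. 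This is exactly the interactive relation~\eqref{eq:local-holder} with $K:=\|\gamma\|_\alpha+K_0$ and $\delta_k:=\epsilon_k=f(R(k+1))^{-1}$, valid uniformly over all large $k$; since $K$ is independent of $R$, so is the threshold $R_0$ of~\eqref{eq:R0-def}, and I may now fix $R>R_0\vee\|x_0\|$. As $f\to\infty$, for all large $k$ one has $\delta_k\le\min\{1,f(R(k+1))^{-1},(a/K)^{1/(\beta-1)}\}$, so Lemma~\ref{lem:fast-growth} applies; and since $f$ is a control function, $\sum_k\delta_k=\sum_k f(R(k+1))^{-1}\gtrsim\int^\infty\!\mathrm{d}s/f(s)=\infty>T$, whence the second assertion of Lemma~\ref{lem:fast-growth} gives $\xi>T$. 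Letting $T\to\infty$ finishes the argument.

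The substantive obstacle has, in effect, already been removed by Lemma~\ref{lem:R-bdd}: the remainder inequality~\eqref{eq:rough-apriori} is quadratic in $\|R^\eta\|_{2\alpha}$, and to extract from it a bound uniform over all the shells $\{\|x\|\approx Rk\}$ one is forced to run the ``flying fish'' Lemma~\ref{lem:poly-bound} along the continuous non-decreasing map $\epsilon\mapsto\epsilon^{(1+\theta)\alpha}\|R^\eta\|_{2\alpha;I(k,\epsilon)}$ supplied by Lemma~\ref{lem:cont-holder}. Within the proof of the theorem itself the only delicate points are the two bookkeeping matters above: making the H\"older exponent of the increment bound for $\eta$ land exactly on $\beta-1=(1-\theta)\alpha$, and extracting $K_0$ (hence $K$, hence $R_0$) \emph{before} committing to a value of $R$ — which is legitimate precisely because Lemma~\ref{lem:R-bdd} is uniform in $R$.
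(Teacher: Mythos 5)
Your proposal is correct and follows essentially the same route as the paper's proof: local well-posedness gives the unique maximal solution, Lemma~\ref{lem:R-bdd} supplies the $R$-independent bound $(\epsilon_k)^{(1+\theta)\alpha}\|R^\eta\|_{2\alpha}\le K_0$, the decomposition $\eta_{s,t}=\sigma(x_s)\gamma_{s,t}+R^\eta_{s,t}$ yields the increment bound $K\,|t-s|^{(1-\theta)\alpha}$ on the shells, and Lemma~\ref{lem:fast-growth} with $\beta=1+(1-\theta)\alpha$ (fixing $K$, hence $R_0$, before choosing $R$) gives non-explosion, exactly as in the paper. The only caveat is your parenthetical claim that every control function satisfies $f(s)=o(s^{1+\epsilon})$: this is false in general (a non-decreasing step function equal to $s_n^{\,n}$ on $[s_n,s_{n+1})$ with $s_{n+1}=s_n^{\,n+1}$ has $\int^\infty \dd s/f(s)=\infty$ yet grows super-polynomially along $(s_n)$); but since you use it only to check Assumption~\ref{cond:perp} with $\beta=1+(1-\theta)\alpha$ --- which is immediate from \ref{eq:kappa_b} whenever $\kappa\le 1-\theta$ and is in any case precisely the step the paper itself asserts without further argument --- it does not change the substance or separate your argument from the paper's.
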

\begin{proof}
  Under the assumed regularities, it is known \cite{Davie:07, Lejay:09, Lejay:12} that the RDE has a unique maximal solution
  $(x, \sigma(x)) \in \mathcal{D}^{2\alpha}_\gamma([0, \xi), H)$
  and it remains to show $\xi = \infty$.

  Since Assumption~\ref{cond:perp} holds with any $\beta > 1$, we aim to apply Lemma~\ref{lem:fast-growth},
  where we take $\gamma$ to be $\eta$ from \eqref{eq:eta-def} and  choose $\beta = 1 + (1 - \theta)\alpha$.

  For any $\bar R > 0$ and $k \in \N$, let us denote $\epsilon_{\bar R(k + 1)} = f(\bar R(k + 1))^{-1}$ and
  $I(k, \bar R) = [\tau^{\bar Rk}, (\tau^{Rk} + \epsilon_{R(k + 1)}) \wedge \tau^{R(k + 1)}]$.
  By Lemma~\ref{lem:R-bdd}, there exists some $\tilde K > 0$ such that for
  any $\bar R > 0$, there exists some $k_0 \in \N$ for which for any $k \ge k_0$ and $s < t \in I(k, \bar R)$,
  $$(\epsilon_{\bar R(k + 1)})^{(1 + \theta)\alpha}\|R^\eta\|_{2\alpha; I(k, \bar R)}\le \tilde K.$$
  Hence,
  \begin{align*}
    \|\eta_{s, t}\| & = \|R^\eta_{s, t} + \sigma(x_s)\gamma_{s, t}\|                                                                        \\
                    & \le \|R^\eta\|_{2\alpha; I(k, \bar R)} (\epsilon_{\bar R(k + 1)})^{(1 + \theta)\alpha}|t - s|^{(1 - \theta) \alpha} +
    \|\sigma(x_\cdot)\|_{\infty; I} \|\gamma\|_{\alpha}|t - s|^\alpha                                                                       \\
                    & \lesssim \left(\tilde K + f(\bar R(k + 1))^{\theta\alpha}
    (\epsilon_{\bar R(k + 1)})^{\theta\alpha}\right)|t - s|^{(1 - \theta) \alpha}                                                           \\
                    & = (\tilde K + 1) |t - s|^{(1 - \theta) \alpha}.
  \end{align*}
  Thus, taking $R > 0$ sufficiently large such that the pair $(R, K)$ where $K=\tilde K+1$ satisfies Equation~\eqref{eq:R-bd},
  we have that, for all $k \ge k_0$,
  $$\sup_{0 \le s \le \sigma_k \wedge \delta_k} \|\eta_{\tau^{R k} + s} - \eta_{\tau^{R k}}\|
    \le K \delta_k^{(1 - \theta) \alpha} = K \delta_k^{\beta - 1}$$
  where $\delta_k = \min\{1, f(R(k + 1))^{-1}, (a / K)^{\frac{1}{\beta - 1}}\}$.
  This is precisely the assumption required by Lemma~\ref{lem:fast-growth} allowing us to conclude non-explosion
  of the solution.
\end{proof}

By utilizing the continuity theorem for RDE flows, the above theorem will allow us to establish the
strong completeness of SDE~\eqref{eq:sde-rde} with multiplicative noise.

Let $W$ be a Brownian motion on $\R^l$ and we define $\mathbf{W}$ as a rough path via the It\^o lift
$$\mathbf{W}^{\text{It\^o}} = (W, \mathbb{W}^{\text{It\^o}}) \in \C^{\frac{1}{2}-} \text{ where }
  \mathbb{W}^{\text{It\^o}}_{s, t} = \int_s^t W_{s, r} \otimes \dd W_r$$
which is defined up to a set of full measure $\Omega_B$.

\begin{corollary}\label{cor:strong-complete}
  Under Assumption~\ref{assume2} with $\alpha < \frac{1}{2}$, the SDE
  \begin{equation}\label{eq:sde-rde}
    \dd x_t = b(x_t) \dd t + \sigma(x_t) \dd W_t
  \end{equation}
  is strongly complete.
\end{corollary}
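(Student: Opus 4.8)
The plan is to argue entirely pathwise through the rough path lift of $W$. Fix $\alpha < \frac{1}{2}$ as in Assumption~\ref{assume2} and choose $\beta \in (\alpha, \frac{1}{2})$; then on the full-measure set $\Omega_B$ the It\^o rough path $\mathbf{W}^{\text{It\^o}}(\omega) = (W, \mathbb{W}^{\text{It\^o}})(\omega)$ lies in $\C^{\beta}(\R_+; \R^l) \subseteq \C^{\alpha+}(\R_+; \R^l)$, so for each $\omega \in \Omega_B$ Assumption~\ref{assume2} holds with $\mathbb{\Gamma} = \mathbf{W}^{\text{It\^o}}(\omega)$. By Theorem~\ref{thm:rough}, for every initial point $x_0$ the RDE $\dd x_t = b(x_t)\dd t + \sigma(x_t)\dd \mathbf{W}^{\text{It\^o}}_t$ has a unique \emph{global} solution $(x^{x_0}, \sigma(x^{x_0})) \in \mathcal{D}^{2\alpha}_W([0, \infty); H)$, and this solution is adapted since $\mathbf{W}^{\text{It\^o}}$ is. I would then invoke the standard identification of rough and It\^o integrals (see e.g.\ \cite{Friz:20}): for the It\^o lift, the rough integral $\int_0^\cdot \sigma(x_s)\dd\mathbf{W}^{\text{It\^o}}_s$ coincides almost surely with the It\^o integral $\int_0^\cdot \sigma(x_s)\dd W_s$, so $x^{x_0}$ is, for almost every $\omega$, a global strong solution of \eqref{eq:sde-rde} started at $x_0$. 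Since $b \in \Lip_{loc}$ and $\sigma \in C^{1/\alpha} \subseteq \Lip_{loc}$, pathwise uniqueness holds for \eqref{eq:sde-rde} up to explosion, so $x^{x_0}$ agrees with the maximal SDE solution and, in particular, \eqref{eq:sde-rde} is complete.

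It remains to produce an almost surely jointly continuous version of $(t, x_0) \mapsto \phi_t(x_0) := x^{x_0}_t$. This I would obtain by localization. Fix $\omega \in \Omega_B$, $T > 0$, and a compact set $\mathcal{K} \subseteq H$. Inspecting the proof of Theorem~\ref{thm:rough} --- in particular the requirement $R > R_0 \vee \|x_0\|$ in Lemma~\ref{lem:fast-growth} together with the fact that the constants $K$ and $R_0$ there depend only on $a$, $b^T_a$ and on the rough path norm $\|\mathbb{\Gamma}\|_\alpha$ over $[0, T]$ --- one checks that there is a radius $M = M(\omega, T, \mathcal{K}) < \infty$ with $\sup_{t \le T}\|x^{x_0}_t\| \le M$ \emph{uniformly} over $x_0 \in \mathcal{K}$. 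Choose modified coefficients $\tilde b \in \Lip(H; H)$ and $\tilde\sigma \in C^{1/\alpha}_b(H; \CL(V; H))$ agreeing with $b$ and $\sigma$ on the ball $B_{2M}$ and bounded with bounded derivatives globally; since the solutions with coefficients $\tilde b, \tilde\sigma$ and initial data in $\mathcal{K}$ coincide with the original ones on $[0, T]$, the classical continuity theorem for the It\^o--Lyons solution map with bounded coefficients (see e.g.\ \cite{Friz:20}) applies and shows that $x_0 \mapsto (x^{x_0}_t)_{t \le T}$ is (locally Lipschitz) continuous from $\mathcal{K}$ into $C([0, T]; H)$. Joint continuity of $(t, x_0) \mapsto x^{x_0}_t$ on $[0, T] \times \mathcal{K}$ then follows from the routine combination of continuity in $t$ of each path with this $C([0, T]; H)$-continuity in $x_0$; exhausting $H$ and $\R_+$ by compacts yields a jointly continuous global flow on a set of full probability, which is precisely strong completeness.

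The main obstacle is the passage from Theorem~\ref{thm:rough}'s \emph{pathwise} non-explosion to a genuinely \emph{continuous} flow in the unbounded-coefficient regime: the available continuity theorems for RDEs presuppose bounded vector fields with bounded derivatives and so do not apply directly. The route around this is exactly the quantitative escape-rate estimate behind Lemma~\ref{lem:fast-growth}, which, once one verifies that its constants may be taken uniform for $\|x_0\|$ bounded, delivers an a priori bound on $\sup_{t \le T}\|x^{x_0}_t\|$ that is uniform over compact sets of initial data; localization then reduces everything to the bounded case. A minor technical point is that the truncation of $b$ and $\sigma$ outside $B_{2M}$ must be performed while keeping $\tilde\sigma \in C^{1/\alpha}_b$ and $\tilde b$ globally Lipschitz, which is straightforward via a smooth cutoff.
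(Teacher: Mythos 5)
Your proposal is correct, and its core coincides with the paper's: both arguments rest on Theorem~\ref{thm:rough} applied pathwise to the It\^o lift, the identification of the resulting global RDE solution with a strong solution of \eqref{eq:sde-rde} via \cite[Theorem 9.1]{Friz:20}, pathwise uniqueness for the SDE, and a localization to bounded coefficients in order to use continuity of the It\^o--Lyons map. Where you diverge is the final flow-construction step. The paper does not build the flow on the rough-path side: it starts from the classical $C^1$ maximal solution flow $(\phi_t(x),\xi(x))$ of the SDE \cite[Theorem 2.3.36]{Arnold:98}, identifies it almost surely with the global RDE solution $\psi(x)$ for each fixed $x$, intersects the null sets over $x\in\mathbb{Q}^d$, and then uses continuity in $x$ of both $\phi$ and $\psi$ (the latter by It\^o--Lyons continuity plus localization) to conclude $\phi=\psi$ and $\xi\equiv\infty$ for \emph{all} $x$ on a single full-measure set, so that the classical flow itself is global and $C^1$. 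You instead construct the continuous global flow directly from the RDE solutions, using the SDE identification only pointwise in the initial condition, and you get joint continuity from a uniform-over-compacts a priori bound plus truncation of the coefficients. Your uniformity claim does hold upon inspection: in Lemma~\ref{lem:R-bdd} the constants are independent of $x_0$ and $R$, and in Lemma~\ref{lem:fast-growth} the initial condition enters only through $R>R_0\vee\|x_0\|$ and \eqref{eq:R-bd}, so the crossing-time bounds, and hence $\sup_{t\le T}\|x^{x_0}_t\|$, are controlled uniformly for $x_0$ in a bounded set. As for what each route buys: the paper's density-plus-continuity transfer delivers that the already-existing classical flow is global and $C^1$ without re-examining the constants of the non-explosion proof (and it is safe here, despite the paper's own warning that explosion times need not be continuous in the initial point, precisely because $\psi$ is globally defined and continuous for every $\omega\in\Omega_B$); your route avoids the countable-dense-set argument entirely at the cost of the constant-tracking, and in fact you could shortcut the uniform bound---truncating around the single trajectory from a fixed $x_0$ and using continuity of the truncated solution map already confines nearby trajectories to the same ball, which is essentially what the paper's phrase ``continuous in $x$ by localization'' means. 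One shared caveat: both proofs invoke continuity of the solution map in the initial datum under $\sigma\in C^{1/\alpha}$, at the boundary of the classical $C^\gamma_b$, $\gamma>1/\alpha$, statements; your write-up is no less rigorous than the paper on this point.
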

\begin{proof}
  Since the coefficients of the SDE are sufficiently regular, it is well known (cf. \cite[Theorem 2.3.36]{Arnold:98})
  that the SDE is path-wise unique and admits a unique $C^1$ maximal solution flow $(\phi_t(x), \xi(x))$.
  On the other hand, by Theorem~\ref{thm:rough}, for any $x \in \R^d$, there exists a unique global
  solution $(\psi_t(x,\omega))$ to the RDE
  \begin{equation}
    \dd x_t = b(x_t) \dd t + \sigma(x_t) \dd \mathbf{W}^{\text{It\^o}}_t(\omega), \qquad x_0 = x.
  \end{equation}
  Hence, as $\psi_t(x)$ is a strong solution of the SDE~\eqref{eq:sde-rde} by \cite[Theorem 9.1]{Friz:20},
  we have by uniqueness that there exists a set of full measure $\Omega_x \subseteq \Omega$ such that
  $\psi(x) = \phi(x)$ and $\xi(x) = \infty$ for all $\omega \in \Omega_x$. Moreover, these solutions
  are continuous in $x$ by localization.

  Denoting $\bar \Omega = \cap_{x \in \mathbb{Q}^d} \Omega_x$,
  we have that $\mathbb{P}(\bar \Omega) = 1$ and moreover, for all $x \in \mathbb{Q}^d$ and
  $\omega \in \bar \Omega$, we have that $\psi(x) = \phi(x)$ and $\xi(x) = \infty$. Now,
  as $\phi$ and $\psi{(\cdot)}$ are both continuous in $x$ (where the latter is continuous by
  the continuity of the It\^o-Lyons map), it follows that $\psi(x) = \phi(x)$ and $\xi(x) = \infty$ for all
  $x \in \R^d$ and $\omega \in \bar \Omega$. Consequently, $\phi$ is a $C^1$ global solution
  flow and the SDE~\eqref{eq:sde-rde} is strongly complete.
\end{proof}

We remark that, replacing the driving noise $W$ by a fractional Brownian motion $B^H$ with Hurst parameter
$H \in (\frac{1}{3}, \frac{1}{2}]$, one can interpret the SDE~\eqref{eq:sde-rde} as a rough differential equation.
Thus, by the continuity theorem for rough paths and Theorem~\ref{thm:rough}, we can also conclude
strong completeness for an SDE with multiplicative noise driven by a fractional Brownian motion 
under Assumption~\ref{assume2}.

\begin{appendix}
  \section{Proof of Lemma~\ref{lem:cont-holder}}\label{sec:cont-holder}
  For the convenience of the reader, we provide below a proof for Lemma~\ref{lem:cont-holder}.
  Let $V$ be a Banach space, $A \in \C^{\alpha}(\Delta_T; V)$ be a two parameter process,
  $t_0 \in [0, T)$, and $\alpha' \in (0, \alpha)$. Define
  \begin{align*}
    N : \epsilon\in [0, T - t_0] \mapsto \|A\|_{\alpha'; [t_0, t_0 + \epsilon]}.
  \end{align*}
  
  \begin{lemma}
    Assume $A$ is continuous as a map from $\Delta_T$ to $V$ and vanishes on the diagonal. Then
    $N$ has a point of discontinuity at $\epsilon^*$ if and only if
    \begin{equation}\label{eq:cont-ineq-iff}
      \lim_{h\to 0}\|A\|_{\alpha'; [\epsilon^* - h, \epsilon^* + h]} > N(\epsilon^*).
    \end{equation}
  \end{lemma}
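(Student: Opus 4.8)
The plan is to first record two soft properties of $N$ and then reduce the claimed two-sided criterion to a statement about near-maximising pairs. Write $p^\ast := t_0+\epsilon^\ast$ for the right endpoint of the defining interval (so that, in the displayed condition, the window is to be read around $p^\ast$, i.e. $[t_0+\epsilon^\ast-h,\,t_0+\epsilon^\ast+h]$, intersected with $[t_0,T]$ near the endpoints). Since the $\alpha'$-Hölder semi-norm is a supremum over a set of pairs $(u,v)$ that grows with the interval, $N$ is non-decreasing; and since $\alpha'<\alpha$ we have $\|A\|_{\alpha';I}\le |I|^{\alpha-\alpha'}\|A\|_{\alpha;\Delta_T}$, so $N$ is finite on all of $[0,T-t_0]$. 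The first genuine step is to show $N$ is \emph{left}-continuous: given $\epsilon_n\uparrow\epsilon^\ast$ and a pair $t_0\le u<v\le p^\ast$ whose quotient $\|A_{u,v}\|/(v-u)^{\alpha'}$ exceeds $N(\epsilon^\ast)-\eta$, either $v<p^\ast$, so the pair already lies in $[t_0,t_0+\epsilon_n]$ for $n$ large, or $v=p^\ast$, in which case replacing $v$ by $t_0+\epsilon_n\uparrow p^\ast$ and invoking joint continuity of $A$ (together with $v-u>0$) shows $N(\epsilon_n)\to N(\epsilon^\ast)$. A monotone function fails to be continuous exactly by failing to be right-continuous, so $N$ is discontinuous at $\epsilon^\ast$ iff $M:=\lim_{\epsilon\downarrow\epsilon^\ast}N(\epsilon)>N(\epsilon^\ast)$; it therefore suffices to prove $M>N(\epsilon^\ast)\iff L>N(\epsilon^\ast)$, where $L:=\lim_{h\downarrow 0}\|A\|_{\alpha';[p^\ast-h,\,p^\ast+h]}$ exists by monotonicity in $h$.

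For the implication $L>N(\epsilon^\ast)\Rightarrow M>N(\epsilon^\ast)$: fix $h_0\in(0,\,L-N(\epsilon^\ast))$. For every $h\in(0,\,h_0\wedge\epsilon^\ast)$, since $\|A\|_{\alpha';[p^\ast-h,\,p^\ast+h]}\ge L$, choose a pair $p^\ast-h\le u_h<v_h\le p^\ast+h$ with quotient $>L-h>N(\epsilon^\ast)$. A pair with $v_h\le p^\ast$ lies in $[t_0,p^\ast]$ (as $u_h\ge p^\ast-h\ge t_0$) and so has quotient $\le N(\epsilon^\ast)$, a contradiction; hence $v_h\in(p^\ast,p^\ast+h]$, the pair lies in $[t_0,p^\ast+h]$, and $N(\epsilon^\ast+h)>L-h$. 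Sending $h\downarrow 0$ gives $M\ge L>N(\epsilon^\ast)$.

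For the converse, assume $M>N(\epsilon^\ast)$ and fix $\eta\in(0,\,M-N(\epsilon^\ast))$. For each small $h>0$ we have $N(\epsilon^\ast+h)\ge M$, so we may pick $t_0\le u_h<v_h\le p^\ast+h$ with quotient $>M-\eta$; since any pair inside $[t_0,p^\ast]$ has quotient $\le N(\epsilon^\ast)<M-\eta$, necessarily $v_h\in(p^\ast,p^\ast+h]$, hence $v_h\to p^\ast$. The crux is to show $u_h\to p^\ast$ as well. If not, pass to a subsequence with $u_{h_n}\le p^\ast-\delta$ for some fixed $\delta>0$; then $v_{h_n}-u_{h_n}\ge\delta$, and uniform continuity of $A$ on the compact set $\Delta_T$ (and of $x\mapsto x^{\alpha'}$ on $[\delta,T]$) gives $\|A_{u_{h_n},v_{h_n}}\|=\|A_{u_{h_n},p^\ast}\|+o(1)$ and $(v_{h_n}-u_{h_n})^{\alpha'}=(p^\ast-u_{h_n})^{\alpha'}+o(1)$, with numerators bounded and denominators bounded below by $\delta^{\alpha'}$; consequently the quotient at $(u_{h_n},v_{h_n})$ equals the quotient at $(u_{h_n},p^\ast)$ up to $o(1)$, and the latter is $\le N(\epsilon^\ast)$ because $(u_{h_n},p^\ast)\subseteq[t_0,p^\ast]$ — contradicting that it exceeds $M-\eta>N(\epsilon^\ast)$ for large $n$. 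Therefore $u_h\to p^\ast$, so $(u_h,v_h)$ lies in the symmetric window $[p^\ast-\tilde h,\,p^\ast+\tilde h]$ with $\tilde h:=\max(h,\,p^\ast-u_h)\downarrow 0$, whence $\|A\|_{\alpha';[p^\ast-\tilde h,\,p^\ast+\tilde h]}>M-\eta$ for all small $h$ and so $L\ge M-\eta$. Letting $\eta\downarrow 0$ yields $L\ge M>N(\epsilon^\ast)$, completing the equivalence.

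The main obstacle is precisely the step in the converse that forces the left endpoints $u_h$ of the near-maximising pairs to converge to $p^\ast$: this is where one genuinely uses that $A$, being continuous on the compact $\Delta_T$, is uniformly continuous, and must carefully propagate a perturbation through both the numerator and the denominator of the Hölder quotient while keeping the denominator bounded away from $0$ via $v_h-u_h\ge\delta$. The remaining care points are bookkeeping: the finiteness of $N$ (using $\alpha'<\alpha$), the left-continuity of $N$, and reading the symmetric window near the endpoints of $[0,T-t_0]$ as its intersection with $[t_0,T]$.
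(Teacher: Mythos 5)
Your proof is correct and follows essentially the same route as the paper's: prove left-continuity of $N$ so that a discontinuity is a right-hand jump, get the easy implication from monotonicity of the H\"older semi-norm in the interval, and for the converse show that (near-)maximising pairs for $N(\epsilon^*+h)$ must concentrate at the right endpoint so that the shrinking symmetric window captures them. The only differences are in bookkeeping: you use near-maximisers and compare the quotients at $(u_h,v_h)$ and $(u_h,t_0+\epsilon^*)$ directly via uniform continuity of $A$ on the compact $\Delta_T$ (with the denominator kept away from $0$ by $v_h-u_h\ge\delta$), whereas the paper takes exact maximisers and argues through the decomposition $A_{u,v}=A_{u,\epsilon^*}+A_{\epsilon^*,v}+\delta A_{u,\epsilon^*,v}$ with a case split on $\liminf|u^*_h-v^*_h|$ --- the same mechanism in both cases.
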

  \begin{proof}
    By translation, we can assume that $t_0 = 0$.   By the monotonicity of the H\"older norm in the interval,
    \eqref{eq:cont-ineq-iff} implies that
    \begin{equation}
      \lim_{h\to 0}N(\epsilon^*+h) \ge  \lim_{h\to 0}\|A\|_{\alpha'; [\epsilon^* - h, \epsilon^* + h]} > N(\epsilon^*)
    \end{equation}
    and the discontinuity of $N$ at $\epsilon^*$. Conversely assuming  $N$ is discontinuous at $\epsilon^*$.
  
    We first note that $N$ is left continuous at any point $\epsilon^*$. Indeed, by continuity of $A$,
    there exist $0\le u^* < v^* \le \epsilon^*$ such that
    $\|A\|_{\alpha'; [0, \epsilon^*]}=\f{\|A_{u^*, v^*}\|}{|u ^*- v^*|^{\alpha'}}$, which means that
    $N(\epsilon)=N(\epsilon^*)$ for any  $\epsilon\in[v^*, \epsilon^*]$. In the case where $v^*<\epsilon^*$,
    $N$ is left continuous at $\epsilon^*$. If on the other hand  $v^*=\epsilon^*$, then
    $$N(\epsilon^*)\ge N(\epsilon^*-h) \ge \frac{\|A_{u^*, v^* - h}\|}{|(v^* - h) - u^*|^{\alpha'}}.$$
    By the continuity of $A$, the right hand side converges to $\|A\|_{\alpha'; [0, \epsilon^*]}=N(\epsilon^*)$
    as $h\to 0$ concluding the left continuity of $N$.
  
    Therefore, $N$ has a discontinuity from the right at $\epsilon^*$. Since $N$ is non-decreasing,
    \begin{equation}\label{eq:cont-ineq}
      \lim_{h \downarrow 0} N(\epsilon^* + h) > N(\epsilon^*).
    \end{equation}
    To show that $ \lim_{h\to 0}\|A\|_{\alpha'; [\epsilon^* - h, \epsilon^* + h]} > N(\epsilon^*)$, it is sufficient if the supremum in the definition of $N(\epsilon^*+h)$ is achieved in  $[\epsilon^* - h, \epsilon^* + h]$ as $h \to 0$.
  
    Let $u^*_h < v^*_h\in [0, \epsilon^* + h]$ be such that $N(\epsilon^* + h)  = \frac{\|A_{u^*_h, v^*_h}\|}{|u^*_h - v^*_h|^{\alpha'}}$.
    Firstly, if $v^*_h < \epsilon^*$, then
    $N(\epsilon^* + h) = N(\epsilon^*)$ contradicting the assumption. Hence  $$v^*_h \in [\epsilon^*, \epsilon^* + h].$$
    If we have $0 \le u^*_h \le \epsilon^* - h$, then
    \begin{equation}\label{eq:cont-ineq-2}
      \begin{split}
        N(\epsilon^*) \le N(\epsilon^* + h) & = \frac{\|A_{u^*_h, v^*_h}\|}{|u^*_h - v^*_h|^{\alpha'}}
        = \frac{\|A_{u^*_h, \epsilon^*} + A_{\epsilon^*, v^*_h} + \delta A_{u^*_h, \epsilon^*, v^*_h}\|}{|u^*_h - v^*_h|^{\alpha'}} \\
                                            & \le \frac{\|A_{u^*_h, \epsilon^*}\|}{|u^*_h - v^*_h|^{\alpha'}}
        + \frac{\|A_{\epsilon^*, v^*_h}\|}{|u^*_h - v^*_h|^{\alpha'}}
        + \frac{\|\delta A_{u^*_h, \epsilon^*, v^*_h}\|}{|u^*_h - v^*_h|^{\alpha'}}                                                 \\
                                            & \le N(\epsilon^*) + \|A\|_{\alpha; [0, T]} h^{\alpha - \alpha'}
        + \frac{\|\delta A_{u^*_h, \epsilon^*, v^*_h}\|}{|u^*_h - v^*_h|^{\alpha'}}.
      \end{split}
    \end{equation}
    If $\liminf_{h \downarrow 0} |u^*_h - v^*_h| = 0$, then $\frac{\|\delta A_{u^*_h, \epsilon^*, v^*_h}\|}{|u^*_h - v^*_h|^{\alpha'}}
      \le 3\|A\|_{\alpha; [0, T]}|u^*_h - v^*_h|^{\alpha - \alpha'}$. Hence, we obtain
    \begin{equation}\label{eq:cont-ineq-3}
      \lim_{h \downarrow 0} N(\epsilon^* + h) = \liminf_{h \downarrow 0} N(\epsilon^* + h) = N(\epsilon^*)
    \end{equation}
    contradicting Equation~\eqref{eq:cont-ineq}.
  
    If, on the other hand, $\liminf_{h \downarrow 0} |u^*_h - v^*_h| >0$,  there exists some $\delta > 0$
    such that for all $h$, $|u^*_h - v^*_h| \ge \delta$, then
    $$\frac{\|\delta A_{u^*_h, \epsilon^*, v^*_h}\|}{|u^*_h - v^*_h|^{\alpha'}}
      \le \delta^{\alpha'} (\|A_{u^*_h, v^*_h} - A_{u^*_h, \epsilon^*}\| + \|A_{\epsilon^*, v_h^*}\|).$$
    Hence, as $v^*_h \in [\epsilon^*, \epsilon^* + h]$,
    the right hand side of which converges to zero as $h \downarrow 0$ by the continuity of $A$.
    Consequently, Equation~\eqref{eq:cont-ineq-3} remains to hold and we again have a contradiction.
  \end{proof}
  
  \begin{proposition}
    Assume $A$ is continuous as a map from $\Delta_T$ to $V$ and vanishes on the diagonal.
    Then $N$ is continuous.
  \end{proposition}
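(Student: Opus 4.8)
The plan is to read the statement off directly from the preceding lemma, which reduces continuity of $N$ to excluding, at each $\epsilon^* \in [0, T - t_0]$, the strict inequality
\[
  \lim_{h \downarrow 0}\|A\|_{\alpha'; [\epsilon^* - h, \epsilon^* + h]} > N(\epsilon^*),
\]
where (as in the lemma) we translate so that $t_0 = 0$ and the window $[\epsilon^* - h, \epsilon^* + h]$ is understood intersected with $[0, T - t_0]$. So I would argue by contradiction: if $N$ were discontinuous at some $\epsilon^*$, the lemma would force the displayed strict inequality, and in particular the left-hand limit would be strictly positive, hence certainly $\ge 0$ since $N(\epsilon^*) \ge 0$.

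The one computation to record is that the $\alpha'$-Hölder seminorm on a short interval is dominated by the global $\alpha$-Hölder seminorm times a positive power of the interval length. Concretely, for any $u < v$ lying in $[\epsilon^* - h, \epsilon^* + h] \cap [0, T - t_0]$ one has $|u - v| \le 2h$ and
\[
  \frac{\|A_{u, v}\|}{|u - v|^{\alpha'}} = \frac{\|A_{u, v}\|}{|u - v|^{\alpha}}\, |u - v|^{\alpha - \alpha'} \le \|A\|_{\alpha; [0, T]}\, (2h)^{\alpha - \alpha'},
\]
and taking the supremum over such $u, v$ gives $\|A\|_{\alpha'; [\epsilon^* - h, \epsilon^* + h]} \le \|A\|_{\alpha; [0, T]}\, (2h)^{\alpha - \alpha'}$. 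Since $\alpha' < \alpha$, the right-hand side tends to $0$ as $h \downarrow 0$, so the limit in the lemma's criterion equals $0 \le N(\epsilon^*)$, contradicting the strict inequality. Hence $N$ has no point of discontinuity.

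I do not expect any genuine obstacle here: all the substance — using joint continuity of $A$ and its vanishing on the diagonal to locate a maximising pair and to control the cross term $\delta A_{u^*_h, \epsilon^*, v^*_h}$ — has already been absorbed into the preceding lemma, and what remains is simply the observation that short intervals carry vanishingly small lower-order Hölder norm. The only points needing a word of care are the endpoints $\epsilon^* = 0$ and $\epsilon^* = T - t_0$, where the window $[\epsilon^* - h, \epsilon^* + h]$ must be truncated to $[0, T - t_0]$ — which only shrinks it and so leaves the bound intact — together with the trivial but essential remark that $N$ takes nonnegative values, which is what turns a vanishing limit into the desired contradiction.
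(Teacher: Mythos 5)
Your proposal is correct and is essentially identical to the paper's own proof: both invoke the preceding lemma to reduce continuity to showing $\lim_{h\to 0}\|A\|_{\alpha'; [\epsilon^* - h, \epsilon^* + h]} \le N(\epsilon^*)$, and both obtain this from the interpolation bound $\|A\|_{\alpha'; [\epsilon^* - h, \epsilon^* + h]} \le \|A\|_{\alpha; [0, T]}(2h)^{\alpha - \alpha'} \to 0$. Nothing further is needed.
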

  \begin{proof}
    By the previous lemma, it suffices to show that
    \begin{equation}
      \lim_{h\to 0}\|A\|_{\alpha'; [\epsilon^* - h, \epsilon^* + h]}  \le N(\epsilon^*)
    \end{equation}
    for any $\epsilon^*$. Indeed, this is the case as
    $$\lim_{h\to 0} \|A\|_{\alpha'; [\epsilon^* - h, \epsilon^* + h]}
      \le \|A\|_{\alpha; [0, T]} (2h)^{\alpha - \alpha'} = 0 \le N(\epsilon^*).$$
  \end{proof}
  
  \section{From local solutions to maximal solutions}\label{sec:extension}
  
  Let $(U_N), N \in \N$ be an increasing sequence of bounded open sets with
  with $\bigcup_N U_N = H$. We assume that for all $R > 0$, there exists some $N \in \N$ such that
  $B_R \subseteq U_N$ (note that this assumption is automatically satisfied if $B_R$ is relatively
  compact). Let $b : \R_+ \times H \to H$ be a measurable function which is bounded
  on bounded sets and $b^N : \R_+ \times H \to H$ be such that
  $b^N(t, x) = b(t, x)$ for all $t \in \R_+$ and $x \in U_N$. We in this section outline a
  procedure to construct a maximal solution flow from localized solution flows on $U_N$.
  
  \begin{definition}[Maximal flow]\label{def:flow}
    A maximal flow is a mapping
    $$\phi : D \to H: (s, t, x) \mapsto \Phi_{s, t}(x)$$
    where $D \subseteq \{(s, t) \in \R_+^2 : s \le t\} \times H$ is a non-empty open set such that:
    \begin{enumerate}
      \item For any $x \in H$,  $\{(s, t) : (s, t, x) \in D\}$ contains the diagonal  of $\R_+\times \R_+$.
      \item Setting $\xi(s, x) = \sup \{t : (s, t, x) \in D\}$, $\xi(s, x) > s$ for any $s \in \R_+$ and
            $$(s, \xi(s, x)) \subseteq \{t : (s, t, x) \in D\}.$$
      \item It satisfy the flow properties:
            \begin{itemize}
              \item For each $x \in H$ and $s \in \R_+$, $\phi_{s, s}(x) = x$.
              \item For $x \in H$ and $0 \le s \le r \le t$, we have that $t < \xi(s, x)$ if and only if
                    $r < \xi(s, x)$ and $t < \xi(r, \phi_{s, r}(x))$. If this is the case, then
                    $$\phi_{r, t}(\phi_{s, r}(x)) = \phi_{s, t}(x).$$
            \end{itemize}
    \end{enumerate}
    A maximal flow $\phi$ is said to be global if $\xi = \inf_{x, s} \xi(s, x) = \infty$.
  \end{definition}
  
  Below we build up a maximal solution by localizing the ODE on bounded sets. This construction does not
  require that these localized ODEs satisfy any uniqueness properties.
  \begin{lemma}\label{lem:extend}
    Suppose that for every $N \in \N$ and every initial conditions $(s, x) \in \R_+ \times H$,
    there exists a global solution $t \mapsto \phi^N_{s, t}(x)$ to
    \begin{equation}\label{eq:ode-loc-local}
      \dd x_t = b^N(r, x_t) \dd t + \dd \gamma_t, \qquad x_s=x.
    \end{equation}
    We refer to these as localized solutions. Then, for every initial condition, the ODE
    $$\dd x_t = b(r, x_t) \dd t + \dd \gamma_t$$
  admits a maximal solution $t \mapsto \phi_{s, t}(x)$ in the sense of Definition~\ref{def:sol}. Moreover,
    \begin{itemize}
      \item[(1)]  If the localized solutions are unique, then the maximal solution to \eqref{eq:ode-intro} is also unique.
      \item[(2)]  If, for each $N$, $\phi^N$ is a global flow, then $\phi$ is a maximal flow.
    \end{itemize}\end{lemma}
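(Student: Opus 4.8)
The plan is to build a maximal solution by patching together the localized solutions $\phi^N$: run the solution in the set $U_N$ where it currently lives, and the moment it is about to leave $U_N$ jump up to a larger index. Maximality will then be extracted from the observation that the indices used necessarily tend to infinity, so if the lifetime were finite the solution could not stay bounded. I expect the maximality step to be the only delicate one; the rest is bookkeeping, and existence of the localized solutions is given, so there is no PDE/ODE analysis to redo.

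Concretely, I would fix an initial condition $(s,x)$ and construct a sequence of triples $(\zeta_k,y_k,n_k)$ as follows. Set $\zeta_0=s$, $y_0=x$, and let $n_0$ be the least index with $x\in U_{n_0}$. Given $(\zeta_k,y_k,n_k)$ with $y_k\in U_{n_k}$, put $\zeta_{k+1}=\inf\{t>\zeta_k:\phi^{n_k}_{\zeta_k,t}(y_k)\notin U_{n_k}\}$ (with $\inf\emptyset=\infty$); if $\zeta_{k+1}=\infty$ stop, and otherwise set $y_{k+1}=\phi^{n_k}_{\zeta_k,\zeta_{k+1}}(y_k)$, which lies on $\partial U_{n_k}$ by continuity of $\phi^{n_k}$ and openness of $U_{n_k}$, and choose $n_{k+1}>n_k$ with $\overline{U_{n_k}}\subseteq U_{n_{k+1}}$ — such an index exists because $U_{n_k}$ is bounded, hence $\overline{U_{n_k}}\subseteq B_R$ for some $R$, and $B_R\subseteq U_M$ for some $M$ by hypothesis. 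Then $y_{k+1}\in U_{n_{k+1}}$, so the recursion proceeds and $\zeta_{k+1}<\zeta_{k+2}$. Define $x_t=\phi^{n_k}_{\zeta_k,t}(y_k)$ for $t\in[\zeta_k,\zeta_{k+1}]$: the pieces match at the junctions, and since on $[\zeta_k,\zeta_{k+1})$ the path stays in $U_{n_k}$ where $b^{n_k}=b$, the integral identity~\eqref{integral} holds on each piece and, by continuity of $t\mapsto\int b$, on the closed intervals; hence $(x_t)$ solves $\dd x_t=b(t,x_t)\dd t+\dd\gamma_t$ on $[s,\xi)$ with $\xi:=\sup_k\zeta_k$, and if the recursion terminates then $\xi=\infty$ and $x$ is global.

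The crux is maximality. Suppose $\xi<\infty$; then the recursion does not terminate, $n_k\ge n_0+k\to\infty$, and $\zeta_k\uparrow\xi$ with each $\zeta_k<\xi$. If $\sup_{t\in[s,\xi)}\|x_t\|=:R<\infty$, I would pick $N$ with $B_{R+1}\subseteq U_N$, so $\overline{B_R}\subseteq U_N$, and then $k_0$ with $n_{k_0}\ge N$, whence $\overline{B_R}\subseteq U_{n_{k_0}}$. On $[\zeta_{k_0},\zeta_{k_0+1}]$ the path $x_t=\phi^{n_{k_0}}_{\zeta_{k_0},t}(y_{k_0})$ takes values in $\overline{B_R}$ (for $t<\xi$ directly, and at $t=\zeta_{k_0+1}=\xi$ if that happens by taking the limit, $\overline{B_R}$ being closed and $\phi^{n_{k_0}}$ continuous); but if $\zeta_{k_0+1}<\infty$ the endpoint is $y_{k_0+1}\in\partial U_{n_{k_0}}$, disjoint from the open set $U_{n_{k_0}}\supseteq\overline{B_R}$ — a contradiction. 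Hence $\zeta_{k_0+1}=\infty$, so $\xi=\infty$, contradicting $\xi<\infty$. Thus $\limsup_{t\uparrow\xi}\|x_t\|=\infty$ whenever $\xi<\infty$, i.e. $(x_t)_{t\in[0,\xi)}$ is a maximal solution in the sense of Definition~\ref{def:sol}.

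For uniqueness, assuming each $\phi^N$ is unique, let $(x_t)_{[s,\xi)}$ and $(\tilde x_t)_{[s,\tilde\xi)}$ be two maximal solutions from $(s,x)$ and let $\tau$ be the supremum of times up to which they coincide; if $\tau<\xi\wedge\tilde\xi$ then $x_\tau=\tilde x_\tau=:z$ by continuity, and choosing $M$ with $z\in U_M$, both solutions stay in $U_M$ on a right neighborhood of $\tau$ and there solve the $b^M$-equation from $(\tau,z)$, so uniqueness of $\phi^M$ forces them to agree there, contradicting the definition of $\tau$; hence $x\equiv\tilde x$ on $[s,\xi\wedge\tilde\xi)$, and maximality of both then gives $\xi=\tilde\xi$ (otherwise the solution with the smaller lifetime would remain bounded near its lifetime, where the other is continuous). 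Finally, when each $\phi^N$ is a global flow, $\phi_{s,t}(x):=x_t$ on $D=\{(s,t,x):s\le t<\xi(s,x)\}$ satisfies conditions (1)--(3) of Definition~\ref{def:flow} by construction, and the cocycle identity propagates from the flow property of the individual $\phi^{n_k}$ because restarting the recursion at an intermediate point $(r,\phi_{s,r}(x))$ reproduces the remaining pieces of the patched solution.
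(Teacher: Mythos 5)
Your construction is the same patching strategy as the paper's proof, and for a continuous driver $\gamma$ the existence and maximality parts essentially go through. The difficulty is that the lemma carries no continuity assumption on $\gamma$ and is invoked precisely in situations where the solution has jumps (the L\'evy setting of Example~\ref{cor:loc-levy}, and the $p$-variation discussion around Lemma~\ref{lem:global-pvar}), and there your recursion stalls: you need $y_{k+1}\in U_{n_{k+1}}$ to continue, and you secure this by arguing that $y_{k+1}\in\partial U_{n_k}\subseteq \bar U_{n_k}\subseteq U_{n_{k+1}}$ ``by continuity''. If the path jumps out of $U_{n_k}$ at the switching time, $y_{k+1}$ can land anywhere in $H$, the index $n_{k+1}$ you chose (only to contain $\bar U_{n_k}$) need not contain it, and the strict increase $\zeta_{k+1}<\zeta_{k+2}$ as well as the limiting argument in the case $\zeta_{k_0+1}=\xi$ also rely on continuity. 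The paper's recursion is arranged to avoid all of this: it restarts $\phi^k$ from whatever point the previous piece ended at, with no requirement that this point lie in $U_k$ (a degenerate interval $(t_{k-1},t_k]$ is harmless), and maximality is read off from the fact that the value at the $k$-th switching time lies outside $U_k\supseteq B_R$ for $k$ large, so the switching values leave every ball.

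Second, in the uniqueness step you pass from uniqueness of the \emph{global} localized solutions $\phi^M$ to agreement of two local-in-time solutions of the $b^M$-equation on $[\tau,\tau+\epsilon]$. That implication is not immediate from the hypothesis, which only provides and uniquely determines global solutions: one must first extend the local piece to a global solution of the localized equation, e.g.\ by concatenating it with $\phi^M$ restarted from its endpoint, which is exactly the device in the paper's proof (follow the maximal solution until the exit time $\tau^N$ of $U_N$, continue with $\phi^N$, conclude by global uniqueness that the maximal solution coincides with $\phi^N$ before $\tau^N$). This identification $\phi_{s,t}=\phi^N_{s,t}$ for $t<\tau^N$ is also what actually delivers the flow property in the last bullet; your justification that ``restarting the recursion at an intermediate point reproduces the remaining pieces'' is not literally true, since the restarted recursion generally selects a different index sequence, and without the identification above (hence without uniqueness) the different localized flows need not be compatible. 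Both gaps are repairable, but the repairs are precisely the points where the paper's write-up differs from yours.
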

  
  \begin{proof} 
  Let $s\ge 0$ and
  $t_1 = \inf \{t : \phi_{s, t}^1(x_0) \notin \bar U_1\}$.
Define
    $$\Phi_{s, t}(x_0) = \phi^1_{s, t}(x_0), \qquad t\in [s, t_1].$$
    By construction, $\Phi$ is a solution up to $t_1$. For all $t \le t_1$, we have
    \begin{align*}\Phi_{s, t}(x_0) & = x_0 + \int_s^t b^1(r, \phi^1_{s, r}(x_0)) \dd r + \gamma_t - \gamma_s \\
                                 & = x_0 + \int_s^t b(r, \Phi_{s, r}(x_0)) \dd r + \gamma_t - \gamma_s.
    \end{align*}
Define recursively, for $k\ge 2$,
\begin{align*}
\Phi_{s, t}(x_0) &= \phi^k_{t_{k - 1}, t}(\Phi_{s, t_{k - 1}}(x_0)), \qquad  t\in (t_{k - 1}, t_{k}],\\
      t_k&= \inf \{t \ge t_{k-1}: \phi_{t_{k - 1}, t}^{k}(\Phi_{s, t_{k - 1}}(x_0)) \notin \bar U_k\}.
\end{align*}
Supposing now that $\Phi$ is a solution up to $t_{k-1}$. Then,
    for $t \in (t_{k - 1}, t_k]$,
    \begin{align*}
      \Phi_{s, t}(x_0) & = \phi^k_{t_{k - 1},t}(\Phi_{s, t_{k - 1}}(x_0))                   \\
                       & = \Phi_{t_{k - 1}}(x_0)
      + \int_{t_{k - 1}}^t b^k(r, \phi^k_{t_{k - 1}, r}(\Phi_{s, t_{k - 1}}(x_0))) \dd r
      + \gamma_t - \gamma_{t_{k - 1}}                                                       \\
                       & = x_0 + \int_s^t b(r, \Phi_{s, r}(x)) \dd r + \gamma_t - \gamma_s.
    \end{align*}
   Thus, we have constructed a solution of \eqref{eq:ode-intro} with initial time $s$ up to the random time
    $\xi_s(x_0) = \lim_{k \to \infty} t_k$. By assumption, for all $R > 0$,
    there exists a $k$ such that $B_R \subseteq U_k$. Since $\Phi_{s, t_k}(x_0) \not \in \bar U_k$ by construction,
    it follows that $\lim_{k \to \infty} \|\Phi_{s, t_k}(x_0)\| = \infty$. Consequently,
    $\limsup_{t \uparrow \xi_s(x)}\|\Phi_{s, t}(x_0)\|= \infty$, proving that $(\Phi_{s, t}(x_0), \xi_s(x_0))$
    is a maximal solution to \eqref{eq:ode-intro}.

Now suppose that for each $N \in \N$, \eqref{eq:ode-loc-local} has a unique global solution
    $(\phi^N_{s, \cdot}(x_0), \xi_s^N(x_0))$ with initial time $s$ and initial condition $x_0$.    
          Let $(\Phi_{s, \cdot}(x_0), \xi(s,x_0))$ be a maximal solution to \eqref{eq:ode-intro} and set $\tau^N = \inf\{t : \Phi_{s, t}(x_0) \notin U_N\}$.  Then 
  $$\tilde \phi^N_{s, t}(x_0) = \mathbb{1}_{\{t \le \tau^N\}} \Phi_{s, t}(x_0)
      + \mathbb{1}_{\{t > \tau^N\}} \phi^N_{\tau^N, t}(\Phi_{s, \tau^N}(x_0))$$
      defines a global solution to~\eqref{eq:ode-loc-local} with same initial condition.
     Indeed, for $t < \tau^N$,
    $\tilde\phi^N_{s, t}(x_0)= \Phi_{s, t}(x_0) = x_0 + \int_s^t b^N(r, \tilde \phi^N_r(x_0)) \dd r + \gamma_t - \gamma_r$,
    while for $t > \tau^N$,
    \begin{align*}
      \tilde \phi^N_{s, t}(x_0) & = \phi^N_{\tau^N, t}(\Phi_{s, \tau^N}(x_0))                                \\
                                & = \Phi_{s, \tau^N}(x_0)
      + \int_{\tau^N}^t b^N(r, \phi^N_{\tau^N, r}(\Phi_{s, \tau^N}(x_0))) \dd r + \gamma_t - \gamma_{\tau^N} \\
                                & = x_0 + \int_s^t b^N(r, \tilde \phi^N_{s, r}) \dd r + \gamma_t - \gamma_s.
    \end{align*}
Thus $\tilde \phi_{s, t}^N$ is a global solution to \eqref{eq:ode-loc-local}.
    By uniqueness, $\phi_t^N(x_0)=\tilde \phi_t^N(x_0)$ for all $t$, and consequently,
    \begin{equation}\label{eq:unique-loc}
      \Phi_{s, t}(x_0) = \phi^N_{s, t}(x_0)
    \end{equation}
    on the set $t < \tau^N$, and therefore on $\cup_N \{t : t < \tau^N\} =\{t: t < \xi_s(x_0)\}$,  which concludes the uniqueness of the maximal solution of \eqref{eq:ode-intro}.

    Finally, if the localized SDEs admits a solution flow, then the flow
    property of the maximal solution follows directly from Equation~\eqref{eq:unique-loc}.
  \end{proof}

  The same construction holds for the path-by-path solution for SDEs.
  
  \begin{definition}[Maximal stochastic solution flow]\label{def:path-by-path}
    Let $(W_t)$ be a Brownian motion on $\R^l$. We say the SDE
    \begin{equation}\label{eq:SDE-loc}
      \dd x_t = b(t, x_t) \dd t + \sigma(t, x_t) \dd W_t
    \end{equation}
    has a maximal solution flow $\phi$ if for any $x \in H$, $t \mapsto \phi_{t}(x)$ is
    a strong solution to the SDE~\eqref{eq:SDE-loc} and for almost surely every $\omega$, $\phi(\omega)$
    is a maximal flow.
    \begin{itemize}
      \item If $\phi(\omega)$ is global for almost every $\omega$, we say that $\phi$ is a global
            solution flow and the SDE~\eqref{eq:SDE-loc} has uniform non-explosion.
      \item We say the SDE~\eqref{eq:SDE-loc} is strongly complete if $\phi$ is a global solution flow
            and the map $(t, x) \mapsto \phi_{t}(x, \omega)$ is jointly continuous for almost surely
            every $\omega$.
    \end{itemize}
  \end{definition}

  In the case of additive SDEs, we adopt the same definition of maximal stochastic solution flows, 
  with the Brownian motion replaced by a more general adapted stochastic process. For brevity, we omit 
  the qualifier ``stochastic'' when referring to maximal stochastic solution flows, as the context 
  will typically make clear whether a given flow is stochastic.
  
  
  \begin{corollary}\label{lem:loc-pbp-exist}
    Suppose that, for each $N$, the localized SDE
    \begin{equation}\label{eq:SDE-loc-local}
      \dd x^N_t = b^N(t, x^N_t) \dd t + \dd X_t
    \end{equation}
   admits a path-by-path solution. Then, the SDE~\eqref{eq:SDE-loc} has a maximal path-by-path solution.
 Moreover:
    \begin{itemize}
      \item If the path-by-path solutions of the localized SDEs are unique, then the maximal path-by-path solution of
            the SDE~\eqref{eq:SDE-loc} is also unique.
      \item If the path-by-path solutions of the localized SDEs are adapted, 
            then so is the maximal path-by-path solution of the SDE~\eqref{eq:SDE-loc}.
      \item If the path-by-path solutions of the localized SDEs are unique, are solution flows,
            continuous in $x$ / jointly continuous in $(t, x)$, and the
            constructed maximal path-by-path solution flow for the SDE~\eqref{eq:SDE-loc} has uniform non-explosion,
            then it is also continuous in $x$ / jointly continuous in $(t, x)$.
    \end{itemize}
  \end{corollary}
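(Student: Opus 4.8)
The plan is to carry out the gluing construction of Lemma~\ref{lem:extend} $\omega$-by-$\omega$. Since the localized SDE~\eqref{eq:SDE-loc-local} has a path-by-path solution for each $N$, there is a full-measure event $\Omega_N$ on which the ODE $\dd x_t = b^N(t,x_t)\dd t + \dd X_t(\omega)$ admits a global solution (and, in the respective refinements, a unique one, resp. a global flow). Put $\Omega_0 = \bigcap_{N} \Omega_N$, which still has full measure. For $\omega \in \Omega_0$, Lemma~\ref{lem:extend} applied with driving curve $\gamma = X_{\cdot}(\omega)$ produces a maximal solution $t \mapsto \phi_{0,t}(x,\omega)$ — and a maximal flow whenever the localized solutions are flows — of $\dd x_t = b(t,x_t)\dd t + \dd X_t(\omega)$. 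By Definition~\ref{def:path-by-path} this is a maximal path-by-path solution of~\eqref{eq:SDE-loc}, and its uniqueness (first bullet) is exactly the uniqueness clause of Lemma~\ref{lem:extend}, applied on $\Omega_0$.

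For adaptedness (second bullet), recall that the solution in Lemma~\ref{lem:extend} is built by concatenating the localized flows $\phi^k$ at the exit times $t_k = \inf\{t : \Phi_{0,t}(x) \notin \bar U_k\}$. If the localized path-by-path solutions are path-wise (adapted), then each $t_k$ is a stopping time, being the first exit of a continuous adapted process from an open set, and one shows by induction on $k$ that $\Phi_{0,\cdot\wedge t_k}(x)$ is adapted: on $(t_{k-1},t_k]$ it equals $\phi^k_{t_{k-1},\cdot}(\Phi_{0,t_{k-1}}(x))$, i.e.\ the localized adapted flow started at the stopping time $t_{k-1}$ from the $\mathcal F_{t_{k-1}}$-measurable initial value $\Phi_{0,t_{k-1}}(x)$, which is adapted. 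Letting $t_k \uparrow \xi$ we conclude that $t \mapsto \phi_{0,t}(x)$ is adapted, hence a strong solution.

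For the third bullet, assume the localized flows are unique, continuous in $x$ (resp.\ jointly in $(t,x)$), and that the constructed maximal flow has uniform non-explosion; work on the intersection of $\Omega_0$ with the full-measure non-explosion event and fix such an $\omega$, together with $T>0$ and a compact $K \subseteq H$. By~\eqref{eq:unique-loc} we have $\Phi_{0,t}(x) = \phi^N_{0,t}(x)$ whenever $t < \tau^N(x) := \inf\{t : \Phi_{0,t}(x)\notin U_N\}$. Fix $(t_0,x_0) \in [0,T]\times K$; since $s \mapsto \Phi_{0,s}(x_0)$ is continuous, its range over $[0,t_0]$ is a compact subset of $H$, hence contained in some ball $B_R$ and therefore in some $U_N$, which gives $\tau^N(x_0) > t_0$. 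As $\phi^N$ is a continuous flow, the exit time $x \mapsto \inf\{t : \phi^N_{0,t}(x) \notin U_N\}$ is lower semicontinuous at $x_0$, so there exist $\delta>0$ and a neighbourhood $O$ of $x_0$ in $K$ with $\phi^N_{0,s}(x)\in U_N$ for all $x\in O$ and $s \le (t_0+\delta)\wedge T$; a short argument using~\eqref{eq:unique-loc} and continuity of $\Phi$ then shows $\tau^N(x) > (t_0+\delta)\wedge T$ and hence $\Phi = \phi^N$ on $O\times[0,(t_0+\delta)\wedge T]$, so continuity of $\Phi$ at $(t_0,x_0)$ follows from that of $\phi^N$. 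Since $(t_0,x_0)$, $T$ and $K$ were arbitrary, $\Phi$ is (jointly) continuous.

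The crux is this last step: transferring continuity from the chart flows $\phi^N$ to $\Phi$, for which one must argue that near any fixed $(t_0,x_0)$ a single chart $U_N$ already governs the dynamics. This combines three ingredients — that compact subsets of $H$ lie in some $U_N$, the lower semicontinuity of exit times from open sets for the continuous localized flows, and uniform non-explosion (without which $\tau^N(x_0)$ need not exceed $t_0$ for any $N$). The remaining items are routine bookkeeping with stopping times and the identity~\eqref{eq:unique-loc}.
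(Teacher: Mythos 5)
Your argument is correct and coincides with the paper's (implicit) proof: the paper states this corollary without a separate argument, relying exactly on applying Lemma~\ref{lem:extend} $\omega$-by-$\omega$ on the full-measure intersection of the events where the localized equations admit (unique / adapted / flow) solutions. The extra details you supply -- stopping-time bookkeeping for adaptedness, and the single-chart argument (boundedness of the trajectory, a chart $U_N$ containing it, equality $\Phi=\phi^N$ up to the exit time via~\eqref{eq:unique-loc}, and uniform closeness of nearby chart trajectories) for transferring continuity -- are consistent with that construction.
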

   
  
  \begin{example}\label{cor:loc-KR}
    Let $b \in L^q([0,T]; L^p_{\loc}(\R^d; \R^d))$ with $p, q \in (2, \infty)$ such that $2/q + d/p< 1$.
    By localizing  $b$ and applying \cite[Proposition 2.1 and Theorem 2.2]{Anzeletti:23}, it
    follows that the SDE $\dd x_t = b(t, x_t) \dd t + \dd W_t$ is strongly complete and 
    path-by-path unique provided uniform non-explosion holds.

In the critical case where $2/q + d/p = 1$ and $d \ge 3$, uniform non-explosion likewise implies strong completeness,
     by the same reasoning together with \cite[Thm. 1.1]{Rockner-Zhao-2021}.
  \end{example}
  
  Another example is when $X = Z$ is a L\'evy process:
  \begin{example}\label{cor:loc-levy}
    Assume now that $b$ is locally Lipschitz in its second argument and the map $t \mapsto \|b(t, 0)\|$
    is bounded on compact sets. Then, defining
    $$b^N(t, x) :=
      \begin{cases}
        b (t, x),                           & \text{if } x \in U_N,    \\
        b\big(t, \frac{Nx}{\|x\|}\big), & \text{if } x \notin U_N,
      \end{cases}$$ 
    $b^N$ is globally Lipschitz. Thus, by \cite[Theorem 3.1 \& 3.2]{Kunita:04}, the SDE
    $$\dd x^N_t = b^N(t, x_t) \dd t + \dd Z_t$$
    has a unique global solution flow $\phi^N$ which is Lipschitz in the initial condition. Thus,
    the SDE $\dd x_t = b(t, x_t) \dd t + \dd Z_t$ has a unique global solution flow which is Lipschitz
    in the initial condition if we have uniform non-explosion.
  \end{example}

\end{appendix}

\newpage
\pagestyle{empty}

\printbibliography
\end{document}